\def\p{\partial}
\newtheorem{Theorem}{Theorem}[section]
\newtheorem{Definition}{Definition}[section]
\newtheorem{Lemma}[Theorem]{Lemma}
\newtheorem{Proposition}[Theorem]{Proposition}
\newtheorem{Corollary}[Theorem]{Corollary}
\newtheorem{Remark}{Remark}[section]
\numberwithin{equation}{section}
\newcommand{\Sp}{S^{p}}
\newcommand{\Sq}{S^{q}}
\newcommand{\Dq}{D^{q+1}}
\newcommand{\m}{\mathfrak{m}}
\newcommand{\Mor}{{\mathsf M}{\mathsf o}{\mathsf r}{\mathsf s}{\mathsf e}}
\newcommand{\GMor}{{\mathsf G}{\mathsf M}{\mathsf o}{\mathsf r}{\mathsf s}{\mathsf e}}
\newcommand{\gtor}{\bar{g}_{tor}}
\def\Ver{{\mathcal{V}}\mathsf {e}\mathsf {r}\mathsf {t}}
\def\F{{\mathcal F}}
\def\H{{\mathcal H}}
\def\M{{\mathcal M}}
\def\Riem{{\mathcal R}{\mathrm i}{\mathrm e}{\mathrm m}}
\def\Ker{\mathrm{K}\mathrm{e}\mathrm{r}}
\def\grad{{\mathrm g}{\mathrm r}{\mathrm a}{\mathrm d}}
\def\Diff{{\mathrm D}{\mathrm i}{\mathrm f}{\mathrm f}}
\begin{document}
\author{Mark Walsh}
\title{Metrics of positive scalar curvature and generalised Morse functions, part II}
\begin{abstract}
The surgery technique of Gromov and Lawson may be used to construct families of positive scalar curvature metrics which are parameterised by Morse functions. This has played an important role in the study of the space of metrics of positive scalar curvature on a smooth manifold and its corresponding moduli spaces. In this paper, we extend this technique to work for families of generalised Morse functions, i.e. smooth functions with both Morse and birth-death singularities.
\end{abstract}

\maketitle

\vspace{-0.7cm} 
\tableofcontents 
\newpage

\section{Introduction}\label{intro}

This is the second part of a larger project aimed at better understanding the topology of the  space of metrics of positive scalar curvature ({\em psc-metrics}) on a smooth manifold $X$. A great deal is known about the problem of whether or not $X$ admits a psc-metric; see \cite{RS} for a survey of this problem. Much less is known about the topology of the space of all psc-metrics on $X$, denoted $\Riem^{+}(X)$, or its corresponding moduli space $\M^{+}(X)$ which is obtained as a  quotient by the action of the diffeomorphism group $\Diff(X)$.

Obtaining topological data about $\Riem^{+}{(X)}$ or $\M^{+}(X)$ is not easy. It is well known that these spaces may not be connected; see \cite{BG}, \cite{Carr} and \cite{Hit}. It has been known for some time also, that the space $\Riem^{+}{(X)}$ may have non-trivial fundamental group; see \cite{Hit}. However, the question of whether or not $\pi_{k}(\Riem^{+}(X))$ is non-trivial when $k\geq 2$, is still open. Moreover, the fact that $\pi_{k}(\M^{+}(X))$ may be non-trivial when $k\geq 1$, was only recently demonstrated in \cite{BHSW}. As yet, little is known about the algebraic structure of these groups.

In this paper, we build on and strengthen the techniques developed in Part One \cite{Walsh1} and \cite{BHSW}. Before stating our results, it is worth saying a few words about the earlier papers.

\subsection{Earlier Work}
A major goal of this project is the construction of interesting families of psc-metrics, in particular families which represent non-trivial elements in the higher homotopy groups of $\Riem^{+}(X)$ or $\M^{+}(X)$. The surgery technique of Gromov and Lawson (which we discuss in detail below) has long been the most fruitful method of building examples of psc-metrics. For instance, it is the only known method of constructing metrics which lie in distinct path components of $\M^{+}(X)$. In Part One, we perform a detailed study of this technique in the case of a smooth compact cobordism of manifolds $\{W; X_0, X_1\}$. Recall this means that $\p W=X_0 \sqcup X_1$, where $X_0$ and $X_1$ are closed manifolds. Starting with a psc-metric $g_0$ on $X_0$ and an {\em admissible} Morse function $f:W\rightarrow I$, a new metric is obtained by extending $g_0$ over $W$ using a modified form of the Gromov-Lawson construction near critical points. Admissibility means that critical points have indices which correspond to surgeries in codimension $\geq$ $3$, a necessary condition for Gromov-Lawson surgery. The resulting metric, denoted $\bar{g}=\bar{g}(g_0,f)$ has positive scalar curvature and carries a product structure near the boundary. It is called a {\em Gromov-Lawson cobordism} and is schematically described in Fig. \ref{glschem}. In the next section, we describe this construction in more detail. The main result of Part One is that, in the case when $W$ is a simply connected cylinder $X\times I$ with $\dim X\geq 5$, the metrics $g_0$ and $g_1=\bar{g}|_{X\times\{1\}}$ are isotopic, i.e. connected by a path through psc-metrics.

\begin{figure}[!htbp]
\begin{picture}(0,0)%
\includegraphics{Pictures/glcobordf.eps}%
\end{picture}%
\setlength{\unitlength}{3947sp}%
\begingroup\makeatletter\ifx\SetFigFont\undefined%
\gdef\SetFigFont#1#2#3#4#5{%
  \reset@font\fontsize{#1}{#2pt}%
  \fontfamily{#3}\fontseries{#4}\fontshape{#5}%
  \selectfont}%
\fi\endgroup%
\begin{picture}(5717,2375)(1174,-3815)
\put(1714,-2524){\makebox(0,0)[lb]{\smash{{\SetFigFont{10}{12}{\rmdefault}{\mddefault}{\updefault}{\color[rgb]{0,0,0}$\bar{g}(g_0, f)$}%
}}}}
\put(1964,-1649){\makebox(0,0)[lb]{\smash{{\SetFigFont{10}{12}{\rmdefault}{\mddefault}{\updefault}{\color[rgb]{0,0,0}{$g_1+dt^{2}$}}%
}}}}
\put(1644,-3524){\makebox(0,0)[lb]{\smash{{\SetFigFont{10}{12}{\rmdefault}{\mddefault}{\updefault}{\color[rgb]{0,0,0}$g_0+dt^{2}$}%
}}}}
\put(5151,-2399){\makebox(0,0)[lb]{\smash{{\SetFigFont{10}{12}{\rmdefault}{\mddefault}{\updefault}{\color[rgb]{0,0,0}$f$}%
}}}}
\put(6876,-1699){\makebox(0,0)[lb]{\smash{{\SetFigFont{10}{12}{\rmdefault}{\mddefault}{\updefault}{\color[rgb]{0,0,0}$1$}%
}}}}
\put(6864,-3749){\makebox(0,0)[lb]{\smash{{\SetFigFont{10}{12}{\rmdefault}{\mddefault}{\updefault}{\color[rgb]{0,0,0}$0$}%
}}}}
\end{picture}%
\caption{The Gromov-Lawson cobordism $\bar{g}=\bar{g}(g_0, f)$ on $W$.}
\label{glschem}
\end{figure}

In \cite{BHSW}, the authors perform a family version of this construction.  This is done with respect to a fibrewise admissible Morse function on a smooth bundle with fibre: the cobordism $W$. Applying this construction to certain non-trivial sphere bundles defined by Hatcher (see \cite{Goette}), allows for the exhibition of non-trivial elements in the higher homotopy groups of $\M_{x}^{+}(X)$, the {\em observer moduli space of psc-metrics} on $X$. This space is obtained as a quotient of $\Riem^{+}(X)$ by the action of the subgroup $\Diff_{x}(X)\subset \Diff(X)$, of diffeomorphisms which fix a base point $x\in X$ and whose derivative maps are identity at $T_{x}X$. The authors go on to show that this implies the existence of non-trivial elements in the higher homotopy groups of the regular moduli space $\M^{+}(X)$.

\subsection{This Paper} 
The main results of this paper involve extending the techniques of Part One and \cite{BHSW} to families of generalised Morse functions. Roughly speaking, a generalised Morse function has both Morse and {\em birth-death singularities}, which allow for cancellation of certain pairs of Morse singularities. Thus, the space of Morse functions on a smooth manifold embeds naturally into the space of generalised Morse functions. Moreover, this allows us to connect up disjoint path components in the space of Morse functions. One reason for including this case, is that the topology of the space of generalised Morse functions on a smooth manifold is both non-trivial and well understood; see \cite{I1}, \cite{I2}, \cite{I3} and \cite{CO}. By extending the above constructions to generalised Morse functions, we hope to utilise this topological knowledge to better understand the space of psc-metrics. At the very least, we hope to exhibit further examples of non-trivial elements in the homotopy groups of $\Riem^{+}(X)$ and  $\M^{+}(X)$.

We begin by showing that it is possible to perform the original construction from Part One continuously over a path through generalised Morse functions, connecting a Morse function with two singularities to a Morse function with none. This is Theorem \ref{genmorsepath}, and is the geometric heart of the paper. An important implication is that, under reasonable conditions, the isotopy type of a Gromov-Lawson cobordism does not depend on the choice of Morse function and so is an invariant of the cobordism. This is Theorem \ref{isothm}, the proof of which depends heavily on results by Hatcher on the connectivity of certain subspaces of the space of generalised Morse functions; see \cite{I3}. Finally, in Theorem \ref{genmain} we show that the family construction in \cite{BHSW} goes through for fibrewise families of generalised Morse functions. 

 
\subsection{Background}
Let $X$ be a smooth closed manifold of dimension $n$. We denote by $\Riem(X)$,  the space of all Riemannian metrics on $X$ under its standard $C^{\infty}$-topology. The space of psc-metrics on $X$, $\Riem^{+}(X)$, is thus an open subspace of $\Riem(X)$. Although most of our work will involve the space $\Riem^{+}(X)$, it is worth recalling the definition of the moduli space of psc-metrics on $X$, $\M^{+} (X)$. Recall that the group $\Diff(X)$, of diffeomorphisms on $X$, acts on $\Riem(X)$ by pull-back as follows.
\begin{equation*}
\begin{split}
\Diff(X)\times \Riem(X)&\longrightarrow \Riem(X),\\
(\phi, g)&\longmapsto \phi^{*}g.
\end{split}
\end{equation*}
The moduli space $\M(X)$ is obtained as a quotient by this action on $\Riem(X)$. Finally, restricting the action to the subspace $\Riem^{+}(X)$, yields the moduli space of psc-metrics $\M^{+}(X)\subset\M(X)$. 

We now recall the notions of isotopy and concordance which play an important role in any analysis of these spaces. Metrics which lie in the same path component of $\Riem^{+}(X)$ are said to be {\em isotopic}. Two psc-metrics: $g_0$ and $g_1$ on $X$, are said to be {\em concordant} if there is a psc-metric $\bar{g}$ on the cylinder $X\times I$ ($I=[0,1]$), so that $\bar{g}=g_0+dt^{2}$ near $X\times \{0\}$ and $\bar{g}=g_1+dt^{2}$ near $X\times \{1\}$. It is clear that these notions are equivalence relations on $\Riem^{+}(X)$. It is also well known that isotopic psc-metrics are necessarily concordant; see Lemma 2.1 in \cite{Walsh1}. Whether or not the converse is true is a difficult open question (at least when $n\geq 5$), and one we devote quite a lot of time to in Part One. Indeed, the main result of Part One, Theorem 1.5 of \cite{Walsh1}, gives an affirmative answer to this question in the case of concordances constructed using the Gromov-Lawson technique, when $X$ is simply connected and $n\geq 5$.

One important reason for seeking an answer to the question of whether concordant metrics are isotopic arises when studying the path-connectivity of $\Riem^{+}(X)$. It is known that $\Riem^{+}(X)$ need not be path-connected. However, the only known method for showing that two psc-metrics on $X$ lie in distinct path components of $\Riem^{+}(X)$, is to show that these metrics are not concordant. For example, Carr's proof in \cite{Carr} that when $k\geq 2$, $\Riem^{+}(S^{4k-1})$ has an infinite number of path components, involves using index obstruction methods to exhibit a countably infinite collection of distinct concordance classes on $S^{4k-1}$. This implies that the space $\Riem^{+}(S^{4k-1})$ has at least as many path components. Note that this result also holds for the moduli space $\M^{+}(S^{4k-1})$, as  $\pi_{0}(\Diff(S^{n}))$ is finite.

As discussed earlier, little is known about the higher homotopy groups of $\Riem^{+}{(X)}$ or $\M^{+}(X)$. It is known, for example, that $\Riem^{+}(S^{2})$ is contractible (as is
$\Riem^{+}(\mathbb{R}P^{2})$), see \cite{RS}. Interestingly, Hitchin showed in \cite{Hit}, that in the spin case, $\pi_1(\Riem^{+}(X))\neq 0$ when $n\equiv -1,0$ (mod $8$) (all of these elements are mapped to zero in the moduli space). However, there are no known examples of non-trivial elements in $\pi_{k}(\Riem^{+}(X))$ when $k>1$. On the other hand, the existence of non-trivial elements in the higher homotopy groups of the moduli space $\M^{+}(X)$ was shown by the authors in \cite{BHSW}. 

We now come to the role of surgery.
Suppose $X$ is a manifold which admits a metric of positive scalar
curvature. The Surgery Theorem of Gromov-Lawson \cite{GL1} (proved independently by Schoen-Yau \cite{SY}) gives a method for constructing further
metrics of positive scalar curvature on any manifold $X'$ which is
obtained from $X$ by surgery in codimension $\geq 3$.
Under reasonable restrictions, this includes every manifold which is
cobordant to $X$. The surgery technique is therefore a powerful device in the construction of new psc-metrics. Indeed, all of the above methods of constructing distinct concordance classes of psc-metrics use some version of this technique.
As discussed above, we utilise the Gromov-Lawson technique to construct a psc-metric $\bar{g}$ on a smooth compact cobordism $\{W; X_0, X_1\}$. We will discuss this in more detail in the next section. For now, recall that the metric $\bar{g}=\bar{g}(g_0, f)$ is determined (up to some minor parameter choices) by a psc-metric $g_0$ on $X_0$ and an admissible Morse function $f:W\rightarrow I$. Henceforth, when we use the terms Gromov-Lawson construction or Gromov-Lawson cobordism, this is what we are referring to.

\subsection{A Family Version of the Gromov-Lawson Construction}
Extending the Gromov-Lawson construction to work for compact families of psc-metrics is straightforward and follows from the fact that positive scalar curvature is an open condition. The proof is a matter of carefully checking each step in the construction and is done in Part One. Generalising the construction to work for compact families of admissible Morse functions is a more delicate matter. The first problem is to define what we mean by a ``family". We employ the notion described by Igusa in chapter four of \cite{FRT} where a family is a certain bundle of fibrewise Morse functions. We will review this in more detail in the next section but, for now, a {\em family of Morse functions} can be thought of in the following way. Let $\pi:E^{n+k+1} \rightarrow B^{k}$ be a smooth fibre bundle with fibre $W^{n+1}$, the smooth cobordism described above. The spaces $E$ and $B$ are smooth compact manifolds. Let $F$ be a map $E \rightarrow B\times I$, satisfying $p_1\circ F=\pi$ (where $p_1$ is projection on the first factor) and whose restriction on the fibre $W_y=\pi^{-1}(y)$ is an admissible Morse function $W_y\rightarrow \{y\}\times I$. Later we will require some other technical conditions on $F$, but we will ignore these for now. Schematically, this is represented in Fig. \ref{fibrewisemorseone}.

\begin{figure}[htbp]
\vspace{-2cm}
\begin{picture}(0,0)%
\includegraphics{Pictures/fibrewisemorse.eps}%
\end{picture}%
\setlength{\unitlength}{3947sp}%
\begingroup\makeatletter\ifx\SetFigFont\undefined%
\gdef\SetFigFont#1#2#3#4#5{%
  \reset@font\fontsize{#1}{#2pt}%
  \fontfamily{#3}\fontseries{#4}\fontshape{#5}%
  \selectfont}%
\fi\endgroup%
\begin{picture}(4536,4099)(1496,-5186)
\put(1576,-3249){\makebox(0,0)[lb]{\smash{{\SetFigFont{10}{12}{\rmdefault}{\mddefault}{\updefault}{\color[rgb]{0,0,0}{$W$}}%
}}}}
\put(1276,-2649){\makebox(0,0)[lb]{\smash{{\SetFigFont{10}{12}{\rmdefault}{\mddefault}{\updefault}{\color[rgb]{0,0,0}{$X_1$}}%
}}}}
\put(1276,-3949){\makebox(0,0)[lb]{\smash{{\SetFigFont{10}{12}{\rmdefault}{\mddefault}{\updefault}{\color[rgb]{0,0,0}{$X_0$}}%
}}}}

\put(4676,-3049){\makebox(0,0)[lb]{\smash{{\SetFigFont{10}{12}{\rmdefault}{\mddefault}{\updefault}{\color[rgb]{0,0,0}{$F$}}%
}}}}
\put(6576,-2449){\makebox(0,0)[lb]{\smash{{\SetFigFont{10}{12}{\rmdefault}{\mddefault}{\updefault}{\color[rgb]{0,0,0}{$B\times I$}}%
}}}}

\put(2236,-2649){\makebox(0,0)[lb]{\smash{{\SetFigFont{10}{12}{\rmdefault}{\mddefault}{\updefault}{\color[rgb]{0,0,0}{$W_y$}}%
}}}}

\put(3876,-2449){\makebox(0,0)[lb]{\smash{{\SetFigFont{10}{12}{\rmdefault}{\mddefault}{\updefault}{\color[rgb]{0,0,0}{$E$}}%
}}}}
\put(2751,-4899){\makebox(0,0)[lb]{\smash{{\SetFigFont{10}{12}{\rmdefault}{\mddefault}{\updefault}{\color[rgb]{0,0,0}$y$}%
}}}}
\put(3406,-4486){\makebox(0,0)[lb]{\smash{{\SetFigFont{10}{12}{\rmdefault}{\mddefault}{\updefault}{\color[rgb]{0,0,0}$\pi$}%
}}}}
\put(4156,-5026){\makebox(0,0)[lb]{\smash{{\SetFigFont{10}{12}{\rmdefault}{\mddefault}{\updefault}{\color[rgb]{0,0,0}$B$}%
}}}}
\put(4756,-4486){\makebox(0,0)[lb]{\smash{{\SetFigFont{10}{12}{\rmdefault}{\mddefault}{\updefault}{\color[rgb]{0,0,0}$p_1$}%
}}}}
\end{picture}%
\caption{A family of admissible Morse functions, each of which has two critical points.}
\label{fibrewisemorseone}
\end{figure}


This notion is utilised in a family version of the Gromov-Lawson construction, performed in Theorem 2.12 of \cite{BHSW}. Later, we will revisit this theorem as a prelude to strengthening it, and so we defer explanation of some of the more technical terms until then. 

\begin{Theorem}\label{main} {\rm [2.12 of \cite{BHSW}]}
Let $\pi: E\to B$ be a bundle of smooth compact manifolds, where the fibre
  $W$ is a compact manifold with boundary $ \p W= X_0\sqcup X_1$, and the
  structure group is $\Diff(W;X_0,X_1)$.
  Let $F: E\to B\times I$ be an admissible
  family of Morse functions, with respect to $\pi$. In addition, we
  assume that the fibre bundle $\pi : E\to B$ is given the structure
  of a Riemannian submersion $\pi: (E,{\mathfrak m}_{E})\to
  (B,{\mathfrak m}_{B})$ such that the metric ${\mathfrak m}_{E}$ is
  compatible with the map $F: E \to B\times I$.  Finally, let $g_0 : B \to \Riem^+(X_0)$ be a smooth map. 

  Then there exists a metric $\bar{g}=\bar{g}(g_0,F,{\mathfrak m}_{E})$
  such that for each $y\in B$ the restriction $\bar{g}(y)=\bar{g}|_{W_y}$ on the
  fibre $W_y=\pi^{-1}(y)$ satisfies the following properties:
\begin{enumerate}
\item[{\bf (1)}] $\bar{g}(y)$ extends $g_0(y)$;
\item[{\bf (2)}] $\bar{g}(y)$ is a product metric $g_{i}(y)+dt^2$
  near $X_{i}\subset \p W_y$, $i=0,1$;
\item[{\bf (3)}] $\bar{g}(y)$ has positive scalar curvature
  on $W_y$.
\end{enumerate}
\end{Theorem}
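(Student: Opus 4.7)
The plan is to reduce the theorem to the single-fibre Gromov–Lawson construction by working in local trivialisations, and then to glue the local constructions into a globally defined fibrewise psc metric using the fact that the gradient data and the surgery parameters can all be chosen to vary smoothly in the base. The Riemannian submersion structure $\mathfrak{m}_E$, together with the compatibility hypothesis on $F$, plays the crucial role: it provides a canonical way to take fibrewise gradients of $F$, and it guarantees that the descending spheres of the fibrewise critical points assemble into smooth submanifolds of $E$ as $y$ varies in $B$.

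First I would examine the fibrewise critical locus. Since $F|_{W_y}$ is a Morse function for each $y$, admissibility together with compactness of $B$ implies that the total number of critical points in each fibre is finite and locally constant; an Ehresmann-type argument using the compatibility of $\mathfrak{m}_E$ with $F$ shows that the critical points assemble into a finite disjoint union of smooth sections $\Sigma_j \to B$, and the fibrewise descending disks and spheres then form smooth subbundles. I would also order the values of $F$ at the critical points so that one can process them one index at a time, just as in Part One. Over a sufficiently small open $U \subset B$ trivialising $\pi$ and ordering critical values, the problem reduces to a smooth family of ordinary Gromov–Lawson constructions: starting from $g_0(y)$, extend by $g_0(y)+dt^2$ up to just below the first critical level, apply the fibrewise surgery modification in a tubular neighbourhood of the descending sphere (whose geometry is controlled by $\mathfrak{m}_E$), and iterate. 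Each step in the single-fibre construction of \cite{Walsh1} is manifestly smooth in its parameters and in the input metric, so the output is a smooth family $\bar{g}_U(y)$ of psc metrics on the fibres over $U$ that satisfies properties (1)--(3).

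Next, I would globalise. Choose a finite cover $\{U_\alpha\}$ of $B$ by such trivialising neighbourhoods and a subordinate partition of unity $\{\rho_\alpha\}$. On each $U_\alpha$ the construction produces $\bar{g}_\alpha(y)$. Because positive scalar curvature is an open condition in the $C^2$-topology, after shrinking the $U_\alpha$ and refining the parameter choices one can arrange the surgery radii, neck sizes and bending parameters to depend smoothly on $y$ globally (using the partition of unity to average the parameter functions rather than the metrics themselves, so that the construction is really a single global one determined by $(g_0, F, \mathfrak{m}_E)$ and a choice of smooth parameter function). Then the fibrewise output $\bar{g}(y)$ is automatically smooth in $y$ and has positive scalar curvature; properties (1) and (2) hold because they hold in each local step, and property (3) holds because each step preserves psc uniformly in $y$ by compactness of $B$.

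The main technical obstacle is the global coherence step: making certain that the local constructions really do assemble into one globally defined smooth family rather than a collection of fibrewise-psc metrics that only happen to agree pointwise. The way to handle this is precisely the observation just made — one should perform the construction once, using the global gradient data from $\mathfrak{m}_E$ and a single smooth choice of parameter function $\varepsilon : B \to (0,\infty)$, rather than doing separate constructions on the $U_\alpha$ and attempting to glue the outputs. With this reformulation the smooth dependence on $y$ is inherited directly from the smoothness of $F$, $\mathfrak{m}_E$, $g_0$ and $\varepsilon$, and the three stated properties follow fibrewise from the single-fibre theorem of Part One.
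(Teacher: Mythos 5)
Your proposal skips the step that makes this theorem non-trivial and that the paper (via \cite{BHSW}) devotes most of its effort to. The difficulty is not getting the surgery parameters (neck widths, torpedo radii, bending curves) to depend smoothly and globally on $y\in B$ --- that part really is routine, as you say, and a single global choice of parameter function $\varepsilon : B\to(0,\infty)$ handles it. The obstruction is that the Gromov--Lawson modification near a critical point is defined with respect to a choice of Morse coordinates, i.e.\ a diffeomorphism of a fibrewise neighbourhood of the critical point with $D^{p+1}\times D^{q+1}$. Over a fold $\Sigma_0\subset\Sigma F$, the vertical bundle $\Ver|_{\Sigma_0}$ splits $\mathfrak m_E$-orthogonally as $\Ver^-\oplus\Ver^+$ into negative and positive eigen-subbundles of the fibrewise Hessian, and this is an $O(p+1)\times O(q+1)$-bundle which may be non-trivial. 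When it is non-trivial, a global choice of Morse coordinates does not exist over $\Sigma_0$, so performing the construction "once, using the global gradient data" is not actually well-defined: the local outputs over $U_\alpha$ and $U_\beta$ will in general disagree on overlaps precisely because they used different trivialisations of $\Ver^\pm$, and averaging numerical parameters in a partition of unity does nothing to resolve this. Your claim that assembling the descending disks and spheres into smooth subbundles of $E$ suffices begs the question, because a smooth subbundle can still be topologically twisted.

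The way the paper handles this is what you are missing: Lemma~\ref{GLequiv} shows that the fibrewise Gromov--Lawson construction near a critical point is equivariant with respect to the $O(p+1)\times O(q+1)$-action on $D^{p+1}\times D^{q+1}$, so the output metric depends only on the splitting $\Ver^-\oplus\Ver^+$ and the background data, not on any chosen framing of $\Ver^\pm$. That is exactly the statement which makes the locally constructed metrics agree on overlaps, and it is why the construction globalises; without it you would need $\Ver^\pm$ to be trivial bundles, which is false in general. There is also a secondary technical point you omit: to perform the construction on the disk bundle of $\Ver|_{\Sigma_0}$ via the fibrewise exponential map, one needs the actual fibrewise trajectory disks of $F$ to coincide (not merely infinitesimally) with the exponential images of $\Ver^\pm$ near the fold, and this requires first isotoping $F$ --- without introducing new singularities --- so that this alignment holds. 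Both of these ingredients are the content of the theorem, and your proposal as written would not survive contact with a fold over which $\Ver^-$ is a non-trivial bundle.
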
 

One limitation of this construction is that all admissible Morse functions in the family must have the same number of critical points of the same index; see Remark 2.7 in the appendix of \cite{I3}. However, it is possible for certain pairs of Morse critical points to cancel in the form of birth-death singularities. In order to connect up admissible Morse functions which have different critical sets, we must allow for this cancellation. This means working in the space of admissible {\em generalised} Morse functions, where a generalised Morse function has Morse and birth-death singularities. A rough description of this cancellation is given in Fig. \ref{genmorseschem} where the Morse singularities $p$ and $q$ cancel at the birth-death singularity $w$. A {\em family of generalised Morse functions} is, roughly, a map $F:E\rightarrow B\times I$ of the type described above, which restricts on each fibre $W_y$ to a generalised Morse function $W_y\rightarrow \{y\}\times I$; see Fig. \ref{genmorsefamilymap}. Furthermore, it turns out that the set of $y\in B$ which have fibre $W_y$ containing a birth-death singularity forms the image of a certain codimension one manifold immersion in $B$. Again, this will be dealt with in more detail later on.

\begin{figure}[htbp]
\vspace{1cm}
\begin{picture}(0,0)%
\includegraphics{Pictures/genmorseschem.eps}%
\end{picture}%
\setlength{\unitlength}{3947sp}%
\begingroup\makeatletter\ifx\SetFigFont\undefined%
\gdef\SetFigFont#1#2#3#4#5{%
  \reset@font\fontsize{#1}{#2pt}%
  \fontfamily{#3}\fontseries{#4}\fontshape{#5}%
  \selectfont}%
\fi\endgroup%
\begin{picture}(5717,2375)(1174,-3815)
\put(2664,-2724){\makebox(0,0)[lb]{\smash{{\SetFigFont{10}{12}{\rmdefault}{\mddefault}{\updefault}{\color[rgb]{0,0,0}$p$}%
}}}}
\put(2964,-2324){\makebox(0,0)[lb]{\smash{{\SetFigFont{10}{12}{\rmdefault}{\mddefault}{\updefault}{\color[rgb]{0,0,0}$q$}%
}}}}
\put(4964,-2624){\makebox(0,0)[lb]{\smash{{\SetFigFont{10}{12}{\rmdefault}{\mddefault}{\updefault}{\color[rgb]{0,0,0}$w$}%
}}}}
\end{picture}%
\caption{Cancelling a pair of critical points}
\label{genmorseschem}
\end{figure}


\begin{figure}[htbp]
\vspace{-2cm}
\begin{picture}(0,0)%
\includegraphics{Pictures/fibrewisegenmorse.eps}%
\end{picture}%
\setlength{\unitlength}{3947sp}%
\begingroup\makeatletter\ifx\SetFigFont\undefined%
\gdef\SetFigFont#1#2#3#4#5{%
  \reset@font\fontsize{#1}{#2pt}%
  \fontfamily{#3}\fontseries{#4}\fontshape{#5}%
  \selectfont}%
\fi\endgroup%
\begin{picture}(4536,4099)(1496,-5186)
\put(1576,-3249){\makebox(0,0)[lb]{\smash{{\SetFigFont{10}{12}{\rmdefault}{\mddefault}{\updefault}{\color[rgb]{0,0,0}{$W$}}%
}}}}
\put(1276,-2649){\makebox(0,0)[lb]{\smash{{\SetFigFont{10}{12}{\rmdefault}{\mddefault}{\updefault}{\color[rgb]{0,0,0}{$X_1$}}%
}}}}
\put(1276,-3949){\makebox(0,0)[lb]{\smash{{\SetFigFont{10}{12}{\rmdefault}{\mddefault}{\updefault}{\color[rgb]{0,0,0}{$X_0$}}%
}}}}

\put(4876,-3049){\makebox(0,0)[lb]{\smash{{\SetFigFont{10}{12}{\rmdefault}{\mddefault}{\updefault}{\color[rgb]{0,0,0}{$F$}}%
}}}}
\put(6576,-2449){\makebox(0,0)[lb]{\smash{{\SetFigFont{10}{12}{\rmdefault}{\mddefault}{\updefault}{\color[rgb]{0,0,0}{$B\times I$}}%
}}}}
\put(3876,-2449){\makebox(0,0)[lb]{\smash{{\SetFigFont{10}{12}{\rmdefault}{\mddefault}{\updefault}{\color[rgb]{0,0,0}{$E$}}%
}}}}

\put(3486,-4486){\makebox(0,0)[lb]{\smash{{\SetFigFont{10}{12}{\rmdefault}{\mddefault}{\updefault}{\color[rgb]{0,0,0}$\pi$}%
}}}}
\put(4456,-5026){\makebox(0,0)[lb]{\smash{{\SetFigFont{10}{12}{\rmdefault}{\mddefault}{\updefault}{\color[rgb]{0,0,0}$B$}%
}}}}
\put(4856,-4486){\makebox(0,0)[lb]{\smash{{\SetFigFont{10}{12}{\rmdefault}{\mddefault}{\updefault}{\color[rgb]{0,0,0}$p_1$}%
}}}}
\end{picture}%
\caption{A family of admissible generalised Morse functions}
\label{genmorsefamilymap}
\end{figure}

\subsection{Main Results.} 
To have any hope of extending Theorem \ref{main} to the case of generalised Morse functions, we must first extend the original Gromov-Lawson cobordism construction over a birth-death singularity. This is the subject of Theorem \ref{genmorsepath} below. The proof of this theorem requires a substantial strengthening of the main result from \cite{Walsh1}, involving the construction of an isotopy through Gromov-Lawson cobordisms, over a cancellation of Morse singularities. This construction is done in Theorem \ref{relconcisodoublesurgery} and is the geometric basis for all of our main results. 

\begin{Theorem}\label{genmorsepath}
Let $\{W; X_0, X_1\}$ be a smooth compact cobordism and let $F:W\times I\rightarrow I\times I$ be a moderate family of admissible generalised Morse functions. Suppose there is a point $y_0\in (0,1)$, so that $f_y=F|_{W\times \{y\}}$ is a Morse function for all $y\in I\setminus \{y_0\}$, and that $f_{y_0}$ contains exactly one birth-death critical point. Finally, let $g_0:I\rightarrow \Riem^{+}(X_0)$ be a family of psc-metrics on $X_0$. Then, there is a metric $\bar{\bar{g}}=\bar{\bar{g}}(F, g_0)$ on $W\times I$ which satisfies the following conditions.
\begin{enumerate}
\item{} For each $y\in [0,1]$,  the restriction of $\bar{\bar{g}}$ on slices $W\times\{y\}$ is a psc-metric which extends $g_0(y)$ and which has a product structure near the boundary $\p W\times\{y\}$.
\item{} For $y\in[0,1]$, away from $y_0$, the restriction of $\bar{\bar{g}}$ on slices $W\times\{y\}$, is a Gromov-Lawson cobordism.
\end{enumerate}
\end{Theorem}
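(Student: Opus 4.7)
The plan is to partition the parameter interval $I=[0,1]$ into three sub-intervals $J_-=[0,y_0-\epsilon]$, $J_0=[y_0-\epsilon,y_0+\epsilon]$ and $J_+=[y_0+\epsilon,1]$, where $\epsilon>0$ is chosen small enough that no birth-death occurs in $J_\pm$. The metric $\bar{\bar{g}}$ will be constructed separately on $W\times J_-$, $W\times J_0$ and $W\times J_+$, and then glued along the two interfaces $\{y_0\pm\epsilon\}$. On the outer intervals $J_\pm$, the restriction $F|_{W\times J_\pm}$ is a smooth one-parameter family of admissible Morse functions with constant combinatorial critical structure. Applying the parameterised Gromov--Lawson cobordism construction from Part One (equivalently, the one-dimensional-base case of the family construction of Theorem \ref{main}) to this family, with $g_0|_{J_\pm}$ as input on $X_0$, produces a smoothly varying family of Gromov--Lawson cobordisms $y\mapsto \bar{g}(g_0(y),f_y)$ on the slices $W\times\{y\}$; these assemble into psc-metrics on $W\times J_-$ and $W\times J_+$ that extend $g_0(y)$ slicewise and carry the required product structure near $\partial W\times\{y\}$.

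On the central interval $J_0$, the birth-death of $f_{y_0}$ corresponds to the cancellation of a pair of Morse critical points $p,q$ of adjacent indices, both in codimension $\geq 3$ by admissibility. For $y$ on one side of $y_0$ inside $J_0$, these two points are present in $f_y$ as honest Morse points coalescing toward $y_0$; on the other side they have disappeared and $f_y$ has the smaller critical set. This is exactly the situation handled by Theorem \ref{relconcisodoublesurgery}, which produces an isotopy through Gromov--Lawson cobordisms interpolating between the cobordism built with the pair $\{p,q\}$ and the one built without them, extending a prescribed family of psc-metrics on $X_0$. Feeding $g_0|_{J_0}$ and the local model of $F$ around $y_0$ into that theorem supplies a psc-metric on $W\times J_0$ whose slices over $J_0\setminus\{y_0\}$ are Gromov--Lawson cobordisms, and whose boundary slices at $y=y_0\pm\epsilon$ can be arranged to match the corresponding slices of the constructions on $J_\pm$.

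Once these three pieces are in hand, the three metrics are glued along the two interfaces. Because the slicewise metrics at $y=y_0\pm\epsilon$ agree, a standard cut-off in the $y$-direction within a small collar together with the openness of positive scalar curvature yields a smooth psc-metric on $W\times I$; the slicewise product structure near $\partial W$ is preserved automatically since it is preserved on each piece. The moderate hypothesis on $F$ provides the control on the second-factor behaviour of $F$ near the birth-death that is needed to carry the matching out consistently, and ensures that the surgery parameters used in Theorem \ref{relconcisodoublesurgery} can be prescribed compatibly with the parameters used on $J_\pm$.

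The main obstacle is entirely concentrated in Theorem \ref{relconcisodoublesurgery}, which is why it is singled out in the excerpt as the geometric heart of the paper. The difficulty is that as the two Morse critical points of $f_y$ collide at $y=y_0$, the standard handle neighbourhoods on which the Gromov--Lawson construction is modelled degenerate, so the surgery data cannot simply be carried through the birth-death. One must interpolate the surgery neighbourhoods, normal form data and the associated psc-metric perturbations in a one-parameter family that crosses the birth-death while remaining inside $\Riem^{+}(W)$. Once that delicate interpolation is available, the present theorem is the comparatively routine partition-and-glue scheme described above.
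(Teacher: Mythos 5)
Your proposal identifies the right key ingredient --- the isotopy of Theorem \ref{relconcisodoublesurgery} --- and your overall strategy of building the metric with the parametrised Gromov--Lawson construction away from the cusp and using that isotopy to cross the birth-death is the correct one, matching the paper's proof in spirit. Where you diverge, and where a genuine gap opens up, is in the decomposition. You partition only the parameter interval $I$ into $J_-\cup J_0\cup J_+$ and propose to apply Theorem \ref{relconcisodoublesurgery} on all of $W\times J_0$, but that theorem is stated for an admissible Morse triple on a \emph{cylinder} $X\times I$ with exactly two cancelling critical points; for $y\in J_0$ the function $f_y$ may have additional Morse critical points at other levels of $W$, unrelated to the cusp, and Theorem \ref{relconcisodoublesurgery} says nothing about that part of $W\times J_0$. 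To repair this, one must localise the birth-death to a coordinate block in $W$ as well as in $I$ (this is where the birth-death normal form is actually used) and handle the remaining folds by the ordinary parametrised construction of Theorem \ref{GLcobordismcompact}.

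The paper does exactly this via a different, cleaner decomposition. It first builds $\bar{\bar g}$ on the sublevel set $(p_2\circ F)^{-1}[0,c_0-\epsilon]$ --- i.e.\ below the cusp's image level $c_0$ in the target $I$-factor, over \emph{all} parameter values $y$ --- using Theorem \ref{GLcobordismcompact}; then extends over all of $W$ for $y$ away from $y_0$; only then does it fill in the one small standard coordinate block around the cusp using the isotopy of Theorem \ref{relconcisodoublesurgery}, and finally extends above the cusp level where no further singularities occur. Because that block is reached last, the metric is already defined on a neighbourhood of its entire boundary and the isotopy is used purely to \emph{extend} inward, eliminating the matching problem your scheme would have to resolve at the interfaces $\{y_0\pm\epsilon\}$. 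One further imprecision worth noting: Theorem \ref{relconcisodoublesurgery} yields an isotopy in $\Riem^{+}(X\times I,\partial(X\times I))$ relative to the boundary metric, not ``an isotopy through Gromov--Lawson cobordisms''; the intermediate stages of that isotopy need not be Gromov--Lawson cobordisms, which is precisely why condition (2) of the theorem is phrased as ``away from $y_0$'' rather than ``for all $y\neq y_0$''.
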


One question which immediately arises from the Gromov-Lawson construction in \cite{Walsh1} concerns the choice of Morse function. In particular, how does the choice of the admissible Morse function affect the resulting metric, say, up to isotopy? By combining Theorem \ref{genmorsepath} with some results of Hatcher on the connectivity of the space of admissible generalised Morse functions, we obtain the following answer to this question. 

\begin{Theorem}\label{isothm}
Let $\{W;X_0,X_1\}$ be a smooth compact cobordism, with $\pi_{1}(W)=\pi_{1}(X_0)=\pi_1(X_1)=0$ and $\dim W=n+1\geq 6$. Let $f_0$ and $f_1$ be a pair of admissible Morse functions on $W$ and let $g_0\in \Riem^{+}(X_0)$. Then the metrics $\bar{g}(g_0, f_0)$ and $\bar{g}(g_0, f_1)$ are isotopic, relative to the metric $g_0$, in $\Riem^{+}(W, \p W)$.  
\end{Theorem}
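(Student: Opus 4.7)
The strategy is to connect $f_0$ and $f_1$ by a path in the space of admissible generalised Morse functions on $W$, and then use Theorem \ref{genmorsepath} together with the family Gromov-Lawson construction of Theorem \ref{main} to lift this path to an isotopy of psc-metrics from $\bar{g}(g_0,f_0)$ to $\bar{g}(g_0,f_1)$.

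The first step is to invoke Hatcher's connectivity result for the space of admissible generalised Morse functions on $W$; under the hypotheses $\pi_1(W)=\pi_1(X_0)=\pi_1(X_1)=0$ and $\dim W\geq 6$, the cited results of \cite{I3} imply that this space is path-connected. Choose a path $\{F_s\}_{s\in[0,1]}$ from $f_0$ to $f_1$. After a small generic perturbation relative to its endpoints, we may assume $F_s$ is a Morse function except at finitely many parameters $s_1<\cdots<s_k$, at each of which $F_{s_j}$ has exactly one birth-death critical point and the path meets the birth-death stratum transversally. Between consecutive $s_j$'s, the number and indices of the Morse critical points are therefore constant. Choose $\epsilon>0$ small enough that the intervals $J_j=[s_j-\epsilon,s_j+\epsilon]$ are disjoint and contained in $(0,1)$.

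On each closed subinterval $[s_{j-1}+\epsilon,s_j-\epsilon]$ (and the analogous intervals adjacent to the endpoints), the family $\{F_s\}$ is a path of admissible Morse functions, so Theorem \ref{main}, applied with base $I$ and the constant family $g_0$, produces a metric on $W\times[s_{j-1}+\epsilon,s_j-\epsilon]$ whose fibrewise restriction is a Gromov-Lawson cobordism extending $g_0$. On each neighbourhood $J_j$ of a birth-death parameter, Theorem \ref{genmorsepath} produces a metric on $W\times J_j$ with the same property, crossing the birth-death event.

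The principal obstacle, and the point at which I expect the technicalities to concentrate, is gluing these pieces together. At each junction $s=s_j\pm\epsilon$ one obtains two psc-metrics on the single fibre $W\times\{s\}$, both of them Gromov-Lawson cobordisms associated to the same Morse function $F_s$ and the same initial metric $g_0$, but arising from potentially different auxiliary data (gradient-like vector field, embedded sphere trivialisations, cut-off parameters). To bridge these, I would invoke the fact that the space of auxiliary choices producing a Gromov-Lawson cobordism for a fixed $(F_s,g_0)$ is contractible, and combine this with the openness-of-positive-scalar-curvature argument that underlies Theorem \ref{main} to obtain a short isotopy through Gromov-Lawson cobordisms at each junction. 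Concatenating the pieces produced on the $J_j$'s, on the intervals in between, and by the junction isotopies, yields a continuous path through $\Riem^+(W,\p W)$ from $\bar{g}(g_0,f_0)$ to $\bar{g}(g_0,f_1)$ whose restriction to $X_0$ is constantly $g_0$, as required.
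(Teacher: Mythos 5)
Your overall approach — connect $f_0$ to $f_1$ by a path of admissible generalised Morse functions, perturb it to a moderate one, and then cross the finitely many birth-death parameters using the one-parameter construction — is exactly the paper's. There are, however, two points worth clarifying.

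First, a small conflation in how path-connectivity is obtained: the path-connectedness of $\GMor(W)$ is Igusa's result (Lemma \ref{genmorseconnect}, coming from the $(n+1)$-connected map to $\Omega^\infty\Sigma^\infty(BO\wedge W_+)$), while Hatcher's $2$-index theorem (Theorem \ref{2index}) plays the distinct role of pushing such a path, rel endpoints, into the admissible subspace $\GMor^{adm}(W)=\GMor_{0,n-2}(W)$. The hypotheses on $\pi_1$ and $\dim W$ enter precisely to check the conditions (a)--(d) of that theorem. You should keep these two inputs separate. The further adjustment to a moderate path is Igusa's Lemma 3.2 of \cite{I3}, not a generic transversality argument you supply yourself.

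Second, and more substantively, your decomposition of the path into Morse-only subintervals (handled by Theorem \ref{main}) and small cusp neighbourhoods $J_j$ (handled by Theorem \ref{genmorsepath}), with an explicit gluing step at each junction, creates a problem that the paper's argument does not have. In the paper, Theorem \ref{genmorsepath} is observed to generalise directly to a moderate path with arbitrarily many cusps, and its proof already performs the ``junction'' internally: the construction is sequential, not parallel. One builds the metric on the region below the first cusp via Theorem \ref{GLcobordismcompact}, extends it over the cusp locus using Theorem \ref{relconcisodoublesurgery} \emph{so as to agree with the already-constructed metric near the boundary of the coordinate block}, and then continues past. Nothing is constructed independently and then glued. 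Your version builds the pieces on the $J_j$'s and on the complementary intervals independently, which is why you then need to reconcile two a priori different Gromov--Lawson cobordisms on each junction fibre. Your proposed remedy — contractibility of the space of auxiliary Gromov--Lawson data for fixed $(F_s,g_0)$ — is plausible (the space of compatible metrics is convex, as noted in Section \ref{admreview}, and gradient-like vector fields and cut-off parameters form contractible spaces), but this is extra machinery the paper never needs, and turning it into a rigorous bridge isotopy would require some care. If you instead extend the already-built metric across each cusp neighbourhood, exactly as in the proof of Theorem \ref{genmorsepath}, the junction issue disappears and your argument coincides with the paper's.
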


We now state the main theorem. This extends Theorem \ref{main} above to the case of families of generalised Morse functions, using Theorem \ref{genmorsepath} as a key step. 
\begin{Theorem}\label{genmain}
  Let $\pi: E\to B$ be a bundle of smooth compact manifolds, where the fibre
  $W$ is a compact manifold with boundary $ \p W= X_0\sqcup X_1$ and the
  structure group is $\Diff(W;X_0,X_1)$.
  Let $F: E\to B\times I$ be a moderate
  family of admissible generalised Morse functions, with respect to $\pi$. In addition, we
  assume that the fibre bundle $\pi : E\to B$ is given the structure
  of a Riemannian submersion $\pi: (E,{\mathfrak m}_{E})\to
  (B,{\mathfrak m}_{B})$, such that the metric ${\mathfrak m}_{E}$ is
  compatible with the map $F: E \to B\times I$, and a gradient-like vector field $V_E$. Finally, let $g_0 : B \to \Riem^+(X_0)$ be a smooth map.

  Then there exists a metric $\bar{g}=\bar{g}(g_0,F)$ (where $F=(F, \m_E, V_E)$)
  such that for each $y\in B$ the restriction $\bar{g}(y)=\bar{g}|_{W_y}$ on the
  fibre $W_y=\pi^{-1}(y)$ satisfies the following properties:
\begin{enumerate}
\item[{\bf (1)}] $\bar{g}(y)$ extends $g_0(y)$;
\item[{\bf (2)}] $\bar{g}(y)$ is a product metric $g_{i}(y)+dt^2$
  near $X_{i}\subset \p W_y$, $i=0,1$;
\item[{\bf (3)}] $\bar{g}(y)$ has positive scalar curvature
  on $W_y$.
\end{enumerate}
\end{Theorem}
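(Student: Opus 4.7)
The plan is to stratify $B$ by the birth-death locus of $F$, use Theorem \ref{main} on the Morse strata and a parametric version of Theorem \ref{genmorsepath} near the birth-death strata, and patch via a partition of unity. By the definition of a moderate family of admissible generalised Morse functions, the set
\[\Sigma := \{y \in B : f_y := F|_{W_y} \text{ has a birth-death singularity}\}\]
is the image of a smooth codimension-one immersion of a compact manifold into $B$. Its complement $B_M := B \setminus \Sigma$ is open and dense, and on each connected component of $B_M$ the restriction of $F$ is a smooth family of admissible Morse functions with locally constant combinatorial type (number and indices of critical points).

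\textbf{Local constructions.} On each connected component of $B_M$, the data $(F, \m_E, V_E, g_0)$ fits the hypotheses of Theorem \ref{main}, producing a fibrewise psc-metric on $\pi^{-1}(B_M)$. Near a point $y_0 \in \Sigma$, one can choose a chart $U \cong V \times (-\epsilon, \epsilon)$ in $B$ with $\Sigma \cap U = V \times \{0\}$ and trivialise the bundle $\pi$ over $U$ so that $F$ on $\pi^{-1}(U)$ looks, up to fibrewise diffeomorphism, like a $V$-parametrised family of the one-parameter birth-death families of Theorem \ref{genmorsepath}. The construction of that theorem depends smoothly on its auxiliary data (Riemannian metric, gradient-like vector field, collar widths, Gromov-Lawson bending parameters, and the cancellation isotopy from Theorem \ref{relconcisodoublesurgery}), and therefore carries through unchanged with $v \in V$ as an additional smooth parameter, producing a psc-metric on $\pi^{-1}(U)$ satisfying (1)--(3) on each fibre.

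\textbf{Patching.} Cover $B$ by finitely many open sets $\{U_\alpha\}$, each of one of the two types above, and let $\bar g_\alpha$ be the local metric on $\pi^{-1}(U_\alpha)$. On each overlap $U_\alpha \cap U_\beta$ and each $y$ in it, the restrictions $\bar g_\alpha(y)$ and $\bar g_\beta(y)$ are Gromov-Lawson cobordisms for the same fibrewise generalised Morse function $f_y$, both extending $g_0(y)$. By a fibrewise application of Theorem \ref{isothm}, they are psc-isotopic rel boundary through a path of psc-metrics. A partition of unity $\{\rho_\alpha\}$ subordinate to $\{U_\alpha\}$, together with smooth interpolation along these isotopies and the openness of the psc condition, assembles the $\bar g_\alpha$ into the desired global metric $\bar g$; conditions (1) and (2) persist throughout because all intermediate metrics carry product structure near $\p W_y$ and extend $g_0(y)$.

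\textbf{Main obstacle.} The principal difficulty is upgrading Theorem \ref{isothm}, which is stated pointwise, to a family statement providing \emph{smooth} fibrewise psc-isotopies on overlaps. This requires re-examining its proof --- which rests on Hatcher's connectivity results for the space of admissible generalised Morse functions and on Theorem \ref{genmorsepath} --- and verifying that the connecting path of generalised Morse functions, and the resulting family of Gromov-Lawson cobordisms, depends smoothly on the base parameter over each overlap. A secondary technical point is continuity of the assembled metric as one crosses $\Sigma$ itself; this is essentially designed into Theorem \ref{genmorsepath}, which constructs the metric continuously in the parameter even at the birth-death value $y_0$.
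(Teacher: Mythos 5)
Your proposal takes a genuinely different route from the paper, and there are two substantive gaps in it.

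First, the paper does not stratify $B$, construct local metrics over strata, and then patch. It constructs $\bar{g}$ in a single global pass, exactly as in the proof of Theorem \ref{main}: one equips $E_0$ with a submersion metric built from $g_0$ and the horizontal distribution, and then extends over $E$ by flowing up the gradient-like vector field $V_E$. The issue of ``global Morse/cusp coordinates not existing'' is handled not by local trivialisation and gluing, but by observing that the metric produced near a fold or cusp depends only on the orthogonal splitting of $\Ver$ into eigenbundles $\Ver^{\pm}$ (respectively $\Ver^{0}\oplus\Ver^{-}\oplus\Ver^{+}$), which is canonical data. The equivariance Lemmas \ref{GLequiv} and \ref{genGLequiv} are precisely what makes the fibrewise construction well defined without choosing frames. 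Near $\Sigma^{1}$ the paper uses the exponential map and Igusa's normal form to write $F$ as $z^{3}+tz+\alpha(x,y)$ on a disk bundle and then applies the construction of Theorem \ref{genmorsepath} fibrewise, with the equivariance guaranteeing global coherence: there is no choice to reconcile, hence nothing to patch.

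Second, your patching step cannot go through as stated. You invoke Theorem \ref{isothm} fibrewise on overlaps, but that theorem assumes $\pi_{1}(W)=\pi_{1}(X_0)=\pi_{1}(X_1)=0$ and $\dim W\geq 6$, hypotheses that Theorem \ref{genmain} does not impose. And even where those hypotheses held, a partition of unity together with "openness of psc" does not assemble local psc-metrics into a global one: $\Riem^{+}$ is not convex, so convex combinations of $\bar{g}_\alpha$ and $\bar{g}_\beta$ need not have positive scalar curvature, and a pointwise isotopy between $\bar{g}_\alpha(y)$ and $\bar{g}_\beta(y)$ gives no recipe for a coherent transition across overlaps. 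You flag this as "the principal difficulty," but it is not merely a missing smoothness verification --- it is the heart of why the paper dispenses with patching altogether and instead proves equivariance so that the construction is canonical in the splitting data.
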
 

\subsection{The Space of Gromov-Lawson Cobordisms}
In the introduction to Part One, we previewed a number of theorems applying the Gromov-Lawson cobordism construction, the idea being that these theorems would form the basis of Part Two. In the end, all but one of these theorems is contained in the results of this paper. The remaining theorem, Theorem D of \cite{Walsh1}, concerns the homotopy type of the space of all Gromov-Lawson cobordisms on $W$. As mentioned earlier, a great deal is known about the homotopy type of the space of generalised Morse functions; see \cite{I1}, \cite{I2}, \cite{I3} and \cite{CO}. Thus, it reasonable to use this space to parameterise an important subspace of psc-metrics on $W$, namely those obtained by the Gromov-Lawson construction. Unfortunately, given the size of this paper and the technical demands in explicitly describing a map from a space of admissible generalised Morse functions to a space of psc-metrics, it has been decided not to include this result. Instead, it will appear as part of a third paper dealing specifically with the topology of the space of Gromov-Lawson cobordisms.

\subsection{Acknowledgements} I am deeply grateful to Boris Botvinnik, my doctoral advisor, for suggesting this problem and for his guidance and friendship during my time at the University of Oregon and beyond. Much of this work took place at WWU M{\"u}nster, Germany as well as at Oregon State University in Corvallis, USA. My sincere thanks to both institutions. In particular, I am grateful to Wolfgang L{\"u}ck and Michael Joachim at M{\"u}nster for their hospitality. My thanks also to Christine Escher at Oregon State for taking an interest in my work. Finally, my thanks to Thomas Schick at the University of G{\"o}ttingen for his incisive comments, and to David Wraith at NUI Maynooth, Ireland, for many helpful conversations.

\section{Revisiting Part One}\label{review}

This section is actually a combination of review and new material. We will provide a review of the main results and techniques from the first paper. We will also make a number of observations about these techniques, as well as prove a theorem which significantly strengthens the main result of Part One.


\subsection{Isotopy and Relative Isotopy}\label{relisotopy}
Recall that $X$ denotes a smooth closed manifold of dimension $n$ and $\Riem(X)$ denotes the space of Riemannian metrics on $X$. Suppose that $W$ is a smooth compact $(n+1)$-manifold with $\p W = X$. We denote by $\Riem(W, X)$, the space of Riemannian metrics on $W$ which take the form of a product metric near the boundary. Thus, an element $\bar{g}\in \Riem(W, X)$ takes the form $\bar{g}=g+dt^{2}$, with $g\in \Riem(X)$, on some collar neighbourhood of the boundary $\p W=X$. 
We wish to generalise this notion to manifolds with corners. 
Here, an {\em $m$-dimensional smooth manifold with corners} has a smooth atlas consisting of charts of the form $(U,\phi)$ where $U$ is an open subset of $[0,\infty)^{k}\times \mathbb{R}^{m-k}$, $0\leq k\leq m$ and $\phi:U\rightarrow Y$ is a homeomorphism onto its image $\phi(Y)$, a non-empty open set. 

We will only be interested in the specific case where $k=2$. Let $Y$ be an $(n+2)$-dimensional smooth compact manifold with corners, the boundary of which decomposes as $\p Y= W_0\cup W_1$. Here $W_0$ and $W_1$ are smooth compact $(n+1)$-dimensional manifolds with a common closed boundary $\p^2 Y=\p W_0=\p W_1 = X$, as shown in Fig. \ref{corners}. Near the boundary components $W_0$ and $W_1$, we can specify ``collar" neighbourhoods which are diffeomorphic to $W_0\times I$ and $W_1\times I$ respectively. Furthermore, near $X$ these neighbourhoods intersect to determine a region which is diffeomorphic to $X\times I \times I$. Now let $\Riem(Y, \p Y, \p^2 Y)$ be the space of Riemannian metrics which satisfy the condition that for each element $\bar{\bar{g}}\in \Riem(Y, \p Y, \p^2 Y)$:
\begin{enumerate}
\item[{\bf (i)}] $\bar{\bar{g}}=\bar{g}_0+ds^2$ near $W_0$ and  $\bar{\bar{g}}=\bar{g}_1+dt^2$ near $W_1$, where $\bar{g}_0\in \Riem(W_0, X)$ and $\bar{g}_1\in \Riem(W_1, X)$,
\item[{\bf (ii)}] $\bar{\bar{g}} = g+ds^2+dt^2$ near $X$ where $\bar{g}_0|_{X}=\bar{g}_1|_{X}=g\in\Riem(X)$.
\end{enumerate}
\noindent We will often refer to such metrics as {\em metrics with corners}.
\begin{figure}[!htbp]
\hspace{5.5cm}
\begin{picture}(0,0)%
\includegraphics{Pictures/corners.eps}%
\end{picture}%
\setlength{\unitlength}{3947sp}%
\begingroup\makeatletter\ifx\SetFigFont\undefined%
\gdef\SetFigFont#1#2#3#4#5{%
  \reset@font\fontsize{#1}{#2pt}%
  \fontfamily{#3}\fontseries{#4}\fontshape{#5}%
  \selectfont}%
\fi\endgroup%
\begin{picture}(5079,1559)(1902,-7227)
\put(2114,-7136){\makebox(0,0)[lb]{\smash{{\SetFigFont{10}{8}{\rmdefault}{\mddefault}{\updefault}{\color[rgb]{0,0,0}$X$}%
}}}}
\put(3189,-6161){\makebox(0,0)[lb]{\smash{{\SetFigFont{10}{8}{\rmdefault}{\mddefault}{\updefault}{\color[rgb]{0,0,0}$Y$}%
}}}}
\put(3200,-7161){\makebox(0,0)[lb]{\smash{{\SetFigFont{10}{8}{\rmdefault}{\mddefault}{\updefault}{\color[rgb]{0,0,0}$W_0$}%
}}}}
\put(1700,-6161){\makebox(0,0)[lb]{\smash{{\SetFigFont{10}{8}{\rmdefault}{\mddefault}{\updefault}{\color[rgb]{0,0,0}$W_1$}%
}}}}
\end{picture}%
\caption{The manifold with corners $Y$, with boundary $\p Y = W_0\cup W_1$ and $\p^{2}Y=\p W_0=\p W_1 = X$}
\label{corners}
\end{figure}   

Finally, we denote by $\Riem^{+}(X)$, $\Riem^{+}(W, \p W)$ and $\Riem^{+}(Y, \p Y, \p^2 Y)$ the respective subspaces of $\Riem(X)$, $\Riem(W, \p W)$ and $\Riem(Y, \p Y, \p^2 Y)$ which consist of metrics with positive scalar curvature. Recall that an {\em isotopy} of psc-metrics on a smooth closed manifold $X$ is a path in the space $\Riem^{+}(X)$. We generalise this definition to metrics in $\Riem^{+}(W, \p W)$ and $\Riem^{+}(Y, \p Y, \p^2 Y)$ in the obvious way. 

\begin{Definition}
{\rm A pair of metrics which are contained in the same path component of $\Riem^{+}(X)$,  (respectively $\Riem^{+}(W,\p W ), \Riem^{+}(Y, \p Y, \p^2 Y)$) are said to be {\em isotopic}.} 
\end{Definition}

Note that in the case of an isotopy in the spaces $\Riem^{+}(W,\p W )$ and $\Riem^{+}(Y, \p Y, \p^2 Y)$, it is not necessary that the metric is fixed near the boundary, only that each metric in the isotopy has a product structure near the boundary. There will, it turns out, be a need to consider isotopies which fix the metric near the boundary. In this case, we will use the term {\em relative isotopy}. More precisely, let $g\in \Riem(X)$ and $\bar{g}, \bar{g}_{0}$ and  $\bar{g}_{1}\in \Riem(W, \p W=X)$ so that $\bar{g}|_{X}={\bar{g}_0}|_{X}={\bar{g}_{1}}|_{X}=g$. Then, $\Riem(W, (\p W, g))$ denotes the subspace of  $\Riem(W, \p W)$ consisting of metrics which restrict to $g$ on the boundary $\p W=X$. Similarly, $\Riem(Y, (\p Y, \bar{g}_0\cup \bar{g}_1), (\p^2 Y, g) )$ denotes the subspace of  $\Riem(Y, \p Y, \p^2 Y)$ consisting of metrics which restrict to $\bar{g}_0, \bar{g}_1$ and $g$ on $W_0$, $W_1$ and $X$. In the case where $g\in \Riem^{+}(X)$ and $\bar{g}_{0}$ and  $\bar{g}_{1}\in \Riem^{+}(W, \p W=X)$, $\Riem^{+}(W, (\p W,g))$ and  $\Riem^{+}(Y, (\p Y, \bar{g}_0\cup \bar{g}_1), (\p^2 Y, g) )$ denote the corresponding subspaces of psc-metrics. 

\begin{Definition}
{\rm A pair of metrics which are contained in the same path component of $\Riem^{+}(W, (\p W, g) )$ (respectively $\Riem^{+}(Y, (\p Y, \bar{g}_0\cup \bar{g}_1), (\p^2 Y, g) )$ ) are said to be {\em relative isotopic} to the metric $g$ (respectively $\bar{g}_{0}\cup\bar{g}_{1}$). }
\end{Definition}

\subsection{Torpedo Metrics}
Before discussing further this relative notion of isotopy, it is worth reviewing an important family of metrics, which were introduced in Part One: {\em torpedo metrics}.

As usual,  $S^{n}$ will denote the standard $n$-dimensional sphere. Throughout, we will assume that $n\geq 3$. We begin by recalling that the standard round metric on $S^{n}$, which we denote $ds_{n}^{2}$, is induced by the embedding
\begin{equation*}
\begin{split}
(0,\pi)\times{S^{n-1}}&\longrightarrow\mathbb{R}\times\mathbb{R}^{n},\\
(t,\theta)&\longmapsto(\cos{t},\sin{t}.\theta).
\end{split}
\end{equation*}
and computed in these coordinates as $dt^{2}+\sin^{2}(t)ds_{n-1}^{2}$. By replacing the $\sin $ term in this expression with a more general smooth function $f:(0,b)\rightarrow (0,\infty)$, we can construct various {\em warped product} metrics on the cylinder $(0,b)\times S^{n-1}$. Provided certain smoothness conditions are satisfied near the end points, we can ensure that the metric $dt^{2}+f(t)^{2}ds_{n-1}^{2}$ is a smooth metric on $S^{n}$; see Part One or \cite{P}. In Part One, we specify some conditions which guarantee such metrics have positive scalar curvature. Roughly speaking, this means ensuring that $\ddot{f}\leq 0$. Thus, by constructing appropriate homotopies of the function $f$, we obtain isotopies of the metric.  In Proposition 1.7 from Part One, we show that the space of psc-metrics which satisfy these conditions (which of course contains the round metric) is a path-connected space. 

By insisting that the function $f$ is positive and constant near $b$, we can construct psc-metrics on the disk $D^{n}$ which have the standard product structure near the boundary.  One important example is known as a {\em torpedo metric}, see Fig. \ref{torpedo}. More precisely, let $f_1$ be a smooth function on $(0,\infty)$ which satisfies the following conditions.
\begin{enumerate}
\item[{\bf (i)}] $f_1(t)=\sin{t}$ when $t$ is near $0$.
\item[{\bf (ii)}] $f_1(t)=1$ when $t\geq\frac{\pi}{2}$.
\item[{\bf (iii)}] $\ddot{f_{1}}(t)\leq 0$.
\end{enumerate}
More generally, for each $\delta>0$, the function $f_\delta:(0,\infty)\rightarrow(0,\infty)$ is defined by the formula
\begin{equation*}
f_{\delta}(t)=\delta f_1 (\frac{t}{\delta}).
\end{equation*}
By restricting $f_{\delta}$ to the interval $(0,b)$, where $b>\delta\frac{\pi}{2}$, the metric $dt^{2}+f_{\delta}(t)^{2}ds_{n-1}^{2}$ on $(0,b)\times S^{n-1}$, is a smooth $O(n)$-symmetric metric on the disk $D^{n}$ which is a round $n$-sphere of radius $\delta$ near the centre and a standard product of $(n-1)$-spheres of radius $\delta$ near the boundary. We denote this metric $\gtor^{n}(\delta)$ and note that its scalar curvature can be bounded below by an arbitrarily large positive constant, by choosing sufficiently small $\delta$.

\begin{figure}[!htbp]
\begin{picture}(0,0)%
\includegraphics{Pictures/torpedo2.eps}%
\end{picture}%
\setlength{\unitlength}{3947sp}%
\begingroup\makeatletter\ifx\SetFigFont\undefined%
\gdef\SetFigFont#1#2#3#4#5{%
  \reset@font\fontsize{#1}{#2pt}%
  \fontfamily{#3}\fontseries{#4}\fontshape{#5}%
  \selectfont}%
\fi\endgroup%
\begin{picture}(5079,1559)(1902,-7227)
\put(2114,-7136){\makebox(0,0)[lb]{\smash{{\SetFigFont{10}{8}{\rmdefault}{\mddefault}{\updefault}{\color[rgb]{0,0,0}$0$}%
}}}}
\put(4189,-7161){\makebox(0,0)[lb]{\smash{{\SetFigFont{10}{8}{\rmdefault}{\mddefault}{\updefault}{\color[rgb]{0,0,0}$b$}%
}}}}
\end{picture}%
\caption{A torpedo function and the resulting torpedo metric}
\label{torpedo}
\end{figure}   

By considering the torpedo metric as a metric on a hemisphere, we can 
obtain a metric on $S^{n}$ by taking its double. Such a metric is given by the formula $dt^{2}+\bar{f_{\delta}}(t)^{2}ds_{n-1}^{2}$, where $\bar{f_{\delta}}:(0,b)\rightarrow(0,\infty)$ agrees with $f_{\delta}$ on $(0,\frac{b}{2})$ and is given by the formula $\bar{f_\delta}(t)=f_{\delta}(b-t)$ on $(\frac{b}{2}, b)$. Here we assume that $\delta\frac{\pi}{2}<\frac{b}{2}$. Such a 
metric will be called a {\it double torpedo metric of radius $\delta$} 
and denoted $g_{Dtor}^{n}(\delta)$; see Fig. \ref{doubletorpedo}. It is easily shown, using Proposition 1.7 from Part One, that this metric is isotopic to the standard round metric $ds_{n}^{2}$. 
\begin{figure}[htbp]
\begin{picture}(0,0)%
\includegraphics{Pictures/Dtorpedo3.eps}%
\end{picture}%
\setlength{\unitlength}{3947sp}%
\begingroup\makeatletter\ifx\SetFigFont\undefined%
\gdef\SetFigFont#1#2#3#4#5{%
  \reset@font\fontsize{#1}{#2pt}%
  \fontfamily{#3}\fontseries{#4}\fontshape{#5}%
  \selectfont}%
\fi\endgroup%
\begin{picture}(5833,1472)(995,-3093)
\put(3944,-3027){\makebox(0,0)[lb]{\smash{{\SetFigFont{10}{8}{\rmdefault}{\mddefault}{\updefault}{\color[rgb]{0,0,0}$b$}%
}}}}
\put(1294,-3027){\makebox(0,0)[lb]{\smash{{\SetFigFont{10}{8}{\rmdefault}{\mddefault}{\updefault}{\color[rgb]{0,0,0}$0$}%
}}}}
\end{picture}%
\caption{A double torpedo function and the resulting double torpedo metric}
\label{doubletorpedo}
\end{figure}

\begin{Remark}
In general, we will suppress the $\delta$ term when writing $\gtor^{n}(\delta)$, $g_{Dtor}^{n}(\delta)$ etc and simply write $\gtor^{n}$, $g_{Dtor}^{n}$ etc, knowing that we may choose $\delta$ to be arbitrarily small if necessary. This is further justified by the fact the scalar curvature of such metrics is positive for any choice of $\delta$ (provided $n\geq 3$) and that a continuous variation of $\delta$ induces an isotopy in the respective metrics.
\end{Remark}

An obvious property of the round sphere metric is that its restriction to the equator is also round, albeit one dimension lower. Moreover, the equator divides the round sphere into two isometric pieces: the upper and lower round hemispheres. Thus the torpedo metric can be thought of as obtained by cutting the sphere in half and then gluing one of these hemispheres along the boundary to a cylindrical product of the equator metric, as shown in the top and middle left pictures of Fig. \ref{reldoubletor}. Some smoothing is of course necessary but, hueristically, this is what happens. 

An analogous procedure can be carried out on the metric $g_{Dtor}^{n}$. We will describe this more precisely soon, but for now a rough sketch is sufficient. Viewing this metric as pictured in Fig. \ref{doubletorpedo}, there are now two equators we might consider: a vertical and a horizontal equator. Notice that the horizontal equator is of course the $(n-1)$-dimensional analogue of $g_{Dtor}^{n}$, namely $g_{Dtor}^{n-1}$; see the top right picture in Fig. \ref{reldoubletor}. By slicing the metric $g_{Dtor}^{n}$ here, we obtain a pair of hemisphere metrics which we denote $g_{Dtor}^{n}(-)$ and $g_{Dtor}^{n}(+)$. By gluing (with appropriate smoothing) the hemisphere $g_{Dtor}^{n}(+)$ to the product $g_{Dtor}^{n-1}+dt^{2}$, we obtain the metric  $\bar{g}_{Dtor}^{n}$. This metric is shown on the middle right of Fig. \ref{reldoubletor}.

We close this section by generalising the above construction one step further to manifolds with corners. 
Beginning with $\gtor^{n}$ we obtain, as an ``equator" metric, the metric $\gtor^{n-1}$ in the obvious way. That is, given the parameterisation $\gtor^{n}=dt^{2}+f_{\delta}(t)^{2}ds_{n-1}^{2}$, we consider $\gtor^{n-1}=dt^{2}+f_{\delta}(t)^{2}ds_{n-2}^{2}$ to be the metric obtained by restriction to the equator sphere $S^{n-2}\subset S^{n-1}$. As before, we obtain a decomposition into hemi-disk metrics $\gtor^{n}=\gtor^{n}(+)\cup\gtor^{n}(-)$. This is shown in the middle left picture in Fig. \ref{reldoubletor}. By attaching $\gtor^{n}(+)$ to the cylinder metric $\gtor^{n-1}+dt^{2}$ (and making appropriate smoothing adjustments) we obtain the metric $\bar{\bar{g}}_{tor}^{n}$ shown in the bottom left of Fig. \ref{reldoubletor}.
Turning our attention to $\bar{g}_{Dtor}^{n}$ on $D^{n+2}$, there is an obvious copy of $\bar{g}_{Dtor}^{n-1}$ which divides the disk into two hemidisk metrics $\bar{g}_{Dtor}^{n}(\pm)$; see the middle right picture in Fig. \ref{reldoubletor}. As before, we attach (making appropriate smoothing adjustments) $\bar{g}_{Dtor}^{n}(+)$  to the cylinder metric $\bar{g}_{Dtor}^{n-1}+ds^{2}$ on $D^{n-1}\times I$ to obtain the metric $\bar{\bar{g}}_{Dtor}^{n}$ shown in the bottom right of Fig. \ref{reldoubletor}. 

\begin{figure}[!htbp]
\vspace{7cm}
\begin{picture}(0,0)%
\hspace{-1cm}
\includegraphics{Pictures/equatortorpedo2.eps}%
\end{picture}%
\setlength{\unitlength}{3947sp}%
\begingroup\makeatletter\ifx\SetFigFont\undefined%
\gdef\SetFigFont#1#2#3#4#5{%
  \reset@font\fontsize{#1}{#2pt}%
  \fontfamily{#3}\fontseries{#4}\fontshape{#5}%
  \selectfont}%
\fi\endgroup%
\begin{picture}(2079,1559)(1002,-5227)
\end{picture}%
\caption{The metrics $ds_{n}^{2}$ and ${g}_{Dtor}^{n}$ (top), $\bar{g}_{tor}^{n}$ and $\bar{g}_{Dtor}^{n}$ (middle), and  $\bar{\bar{g}}_{tor}^{n}$ and  $\bar{\bar{g}}_{Dtor}^{n}$ (bottom) }
\label{reldoubletor}
\end{figure} 

We now consider an important example of an isotopy of psc-metrics on the disk $D^{n+2}$, which is relative to the boundary $\p D^{n+2}=S^{n+1}$. This isotopy involves a continuous deformation of the standard torpedo metric $\gtor^{n+2}$ to a metric which we denote by $\bar{g}_{Dtor}^{n+2}$. 

\begin{Lemma}\label{reltorplemma}
For $n\geq 2$, the metrics  $\bar{g}_{Dtor}^{n+2}$ and $\gtor^{n+2}$ are isotopic on $D^{n+2}$.
\end{Lemma}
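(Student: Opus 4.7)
The plan is to construct an explicit isotopy in $\Riem^{+}(D^{n+2},\p D^{n+2})$ by exploiting the warped-product structure of both metrics, noting that since the statement is not a \emph{relative} isotopy we have the freedom to deform the boundary metric along the way: $\gtor^{n+2}$ has product boundary the round $\delta$-sphere $\delta^{2}ds_{n+1}^{2}$, while $\bar{g}_{Dtor}^{n+2}$ has product boundary $g_{Dtor}^{n+1}$.

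First I would elongate $\gtor^{n+2}$ by inserting a trivial cylinder $\delta^{2}ds_{n+1}^{2}+du^{2}$ of arbitrary length into its cylindrical region; this is visibly an isotopy. Along the inserted cylinder I would then interpolate the cross-section metric from $\delta^{2}ds_{n+1}^{2}$ to $g_{Dtor}^{n+1}$. A path through $\Riem^{+}(S^{n+1})$ joining these two metrics exists because both are $O(n+1)$-symmetric warped products of the form $dt^{2}+h(t)^{2}ds_{n}^{2}$, and the space of such psc warped products is path-connected by Proposition 1.7 of Part One. Provided the inserted cylinder is taken sufficiently long, the $t$-derivative contributions of the interpolation to the scalar curvature can be made as small as desired, so positive scalar curvature is preserved throughout. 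After this stage the metric has the form: a round torpedo cap, followed by a cylindrical transition tube, followed by a product cylinder over $g_{Dtor}^{n+1}$ giving the correct boundary.

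Finally, I would isotope the "round cap plus transition" region into the hemi-disk $g_{Dtor}^{n+2}(+)$ appearing in the construction of $\bar{g}_{Dtor}^{n+2}$. Both pieces are $O(n+1)$-equivariant warped-product psc-metrics on a disk agreeing with the cylinder over $g_{Dtor}^{n+1}$ near their common boundary, and so again lie in a path-connected subspace by Proposition 1.7 of Part One. Concatenating the three stages yields an isotopy $\gtor^{n+2}\leadsto\bar{g}_{Dtor}^{n+2}$ in $\Riem^{+}(D^{n+2},\p D^{n+2})$. The principal technical obstacle is verifying that positive scalar curvature is maintained during the cross-section interpolation of the middle stage; this is handled by the familiar "slow transition on a long cylinder" device used throughout Part One, the relevant estimates coming from differentiating the warped-product scalar-curvature formula and controlling the contributions from $\dot{h}$ and $\ddot{h}$ by lengthening the interpolation interval.
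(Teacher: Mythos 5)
Your overall strategy is genuinely different from the paper's: you propose a ``long cylinder'' concordance-style interpolation of cross-section metrics, whereas the paper writes $\gtor^{n+2}$ explicitly as a doubly warped product $dr^{2}+\alpha(r,t)^{2}dt^{2}+\cos^{2}r\,ds_{n}^{2}$ and deforms the pair $(\alpha,\cos r)$ directly, checking positivity against an explicit scalar-curvature formula for doubly warped products. Your stages 1 and 2 are fine: inserting a cylinder of round cross-sections is manifestly an isotopy, and the interpolation of the cross-section from $\delta^{2}ds_{n+1}^{2}$ to $g_{Dtor}^{n+1}$ along a sufficiently long cylinder is exactly the slow-transition device of Lemma 1.3 of Part One, with path-connectedness supplied by Proposition 1.7 of Part One since both cross-sections are rotationally symmetric warped products $dt^{2}+h(t)^{2}ds_{n}^{2}$ on $S^{n+1}$.

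Your stage 3, however, is a genuine gap. You claim that ``round cap plus transition'' and $g_{Dtor}^{n+2}(+)$ ``lie in a path-connected subspace by Proposition 1.7 of Part One,'' but Proposition 1.7 concerns cohomogeneity-one warped products $dt^{2}+f(t)^{2}ds_{n-1}^{2}$ on a \emph{sphere} $S^{n}$. The two metrics you are comparing live on a \emph{disk} $D^{n+2}$, are only $O(n+1)$-symmetric (cohomogeneity two), and their near-boundary cross-sections are $g_{Dtor}^{n+1}$, which is itself a nontrivial warped product on $S^{n+1}$ rather than a round sphere. So neither metric is of the form covered by Proposition 1.7, and that proposition gives no path-connectedness statement for such doubly warped disk metrics with this prescribed boundary behaviour. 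This is precisely the hard content that the paper's proof supplies: it establishes the needed isotopy through doubly warped metrics by verifying positivity directly from the scalar-curvature formula $R=\frac{n(n-1)}{f^{2}}\bigl[1-(\tfrac{\p f}{\p r})^{2}\bigr]-\frac{2n}{f}\bigl[\tfrac{\p^{2}f}{\p r^{2}}+\tfrac{(\p f/\p r)(\p\alpha/\p r)}{\alpha}\bigr]-2\tfrac{\p^{2}\alpha/\p r^{2}}{\alpha}$ and imposing the sign constraints $0\leq\p\alpha/\p r\leq 1$, $\p^{2}\alpha/\p r^{2}\leq 0$, $-1\leq\p f/\p r\leq 0$, $\p^{2}f/\p r^{2}\leq 0$. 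To repair your argument you would need either to carry out an analogous explicit computation for the family of doubly warped metrics realising your stage-3 deformation, or to formulate and prove a disk-metric, cohomogeneity-two analogue of Proposition 1.7 --- neither of which is supplied by the citation you give.
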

\begin{proof}
We begin with an alternate description of the torpedo metric $\gtor^{n+2}$. Previously, we thought of an $(n+2)$-dimensional torpedo metric as obtained by taking a round $(n+1)$-sphere and then tracing out first a cylinder and then a hemisphere, by smoothly adjusting the radius. In this case, it is better to start with a round $(n+1)$-dimensional hemisphere. We will then construct the torpedo metric by first tracing out the right hand side of the round cylinder, then rotating by an angle $\pi$ to trace out the round hemisphere, before finishing with the left hand side of the round cylinder. This is shown in Fig. \ref{newcoordtor}.
\begin{figure}[!htbp]
\vspace{3cm}
\begin{picture}(0,0)%
\includegraphics{Pictures/newcoordtor.eps}%
\end{picture}%
\setlength{\unitlength}{3947sp}%
\begingroup\makeatletter\ifx\SetFigFont\undefined%
\gdef\SetFigFont#1#2#3#4#5{%
  \reset@font\fontsize{#1}{#2pt}%
  \fontfamily{#3}\fontseries{#4}\fontshape{#5}%
  \selectfont}%
\fi\endgroup%
\begin{picture}(2079,1559)(1002,-5227)
\put(2600,-3500){\makebox(0,0)[lb]{\smash{{\SetFigFont{10}{8}{\rmdefault}{\mddefault}{\updefault}{\color[rgb]{0,0,0}$t=0$}%
}}}}
\put(2600,-5000){\makebox(0,0)[lb]{\smash{{\SetFigFont{10}{8}{\rmdefault}{\mddefault}{\updefault}{\color[rgb]{0,0,0}$t=-b$}%
}}}}

\put(1500,-2600){\makebox(0,0)[lb]{\smash{{\SetFigFont{10}{8}{\rmdefault}{\mddefault}{\updefault}{\color[rgb]{0,0,0}$t=\frac{\pi}{2}$}%
}}}}

\put(500,-3500){\makebox(0,0)[lb]{\smash{{\SetFigFont{10}{8}{\rmdefault}{\mddefault}{\updefault}{\color[rgb]{0,0,0}$t=\pi$}%
}}}}

\put(500,-5000){\makebox(0,0)[lb]{\smash{{\SetFigFont{10}{8}{\rmdefault}{\mddefault}{\updefault}{\color[rgb]{0,0,0}$t=b+\pi$}%
}}}}

\end{picture}%
\caption{Alternate description of the torpedo metric}
\label{newcoordtor}
\end{figure} 

We now determine a formula for $\gtor^{n+2}$ in coordinates which adhere to this description. Let $r\in (0,\frac{\pi}{2})$ and $t\in (-b, b+\pi)$. Now, let $\alpha$ denote a smooth function on $(0,\frac{\pi}{2})\times (-b, b+\pi)$ which is defined as follows:
\begin{enumerate}
\item[{\bf (i)}] $\alpha(r,t)=1$ when $t\in(-b,-\epsilon)$,
\item[{\bf (ii)}] $\alpha(r,t)=\sin r$ when $t\in(\epsilon, {\pi}-\epsilon)$,
\item[{\bf (iii)}] $\alpha(r,t)=1$ when $t\in (\pi+{\epsilon}, b)$.
\end{enumerate}
Here $\epsilon$ is assumed to be arbitrarily small and, furthermore, we assume that $0\leq \frac{\p \alpha}{\p r}\leq 1$ and that $\frac{\p^{2} \alpha}{\p r^{2}}\leq 0$. By choosing an appropriate bump function $\mu:(-b, \pi+b)\rightarrow [0,1]$ we may assume that $\alpha$ takes the form
\begin{equation*}
\alpha(r,t)=1-\mu(t)+\mu(t)\sin(r).
\end{equation*}
\noindent In these coordinates, the torpedo metric $\gtor^{n+2}$ is given by the formula
\begin{equation*}
g_{tor}^{n+2}=dr^{2}+\alpha^{2}dt^{2}+\cos^{2}r ds_{n}^{2},
\end{equation*}
where of course $ds_{n}^{2}$ denotes the round metric on $S^{n}$. For convenience, we are only considering the case $\gtor^{n+2}=g_{tor}^{n+2}(1)$. It will be clear that the lemma holds if $1$ is replaced by any $\delta>0$.

We will perform a deformation of the metric $\gtor^{n+2}$ over two stages. In the first stage, we will replace the round $(n+1)$-dimensional hemisphere with an $(n+1)$-dimensional torpedo. This involves a homotopy of the $\cos r$ term to a term $f(r)$ where $f$ is an appropriate torpedo function. In otherwords, $f(r)=\cos r$ when $r$ is away from $0$ and $f(r)=1$ when $r$ is near $0$. Moreover $f$ satisfies the usual condition on torpedo functions that $\ddot{f}\leq 0$. At the same, we adjust $\alpha$ so that the $\sin(r)$ term is replaced by a term $h(r)$ where $h:(0, \frac{\pi}{2})\rightarrow (0,1)$ is a smooth function satisfying:
\begin{enumerate}
\item{}$h(r)=r$, near $0$,
\item{}$h(r)=\sin(r)$, away from $0$.
\end{enumerate}
\noindent The resulting metric is represented in Fig. \ref{deformtorp1}. 

\begin{figure}[!htbp]
\begin{picture}(0,0)%
\includegraphics{Pictures/deformtorp1.eps}%
\end{picture}%
\setlength{\unitlength}{3947sp}%
\begingroup\makeatletter\ifx\SetFigFont\undefined%
\gdef\SetFigFont#1#2#3#4#5{%
  \reset@font\fontsize{#1}{#2pt}%
  \fontfamily{#3}\fontseries{#4}\fontshape{#5}%
  \selectfont}%
\fi\endgroup%
\begin{picture}(2079,1559)(1002,-5227)
\end{picture}%
\caption{The metric resulting from the first deformation}
\label{deformtorp1}
\end{figure} 

The second and final deformation is to stretch the metric in the horizontal direction to obtain $\bar{g}_{Dtor}^{n+2}$. This is done by adjusting the $\alpha$ term so that near $t=\frac{\pi}{2}$, $\alpha=1$. The resulting metric is now a product of torpedo metrics near $t=\frac{\pi}{2}$ and we can stretch it horizontally as much as we require. By choosing an appropriate family of ``cut-off" style functions $\mu:(-b, b+\pi)\rightarrow [0,1]$, each stage in this deformation takes the form
\begin{equation*}
\alpha(r,t)=(1-\mu(t))+\mu(t)h(r),
\end{equation*}
\noindent thus ensuring that the conditions $0\leq \frac{\p \alpha}{\p r}\leq 1$ and $\frac{\p^{2} \alpha}{\p r^{2}}\leq 0$ are preserved throughout. 

It is clear that this deformation preserves the product structure of the metric near the boundary. It remains to show that positive scalar curvature is preserved throughout. The scalar curvature $R$, of the more general metric 
\begin{equation*}
dr^{2}+\alpha^{2}(r,t)dt^{2}+F^{2}(r,t)ds_{n}^{2}\quad \mbox{on $(0,\frac{\pi}{2})\times (-b,b+\pi)\times S^{n}$,}
\end{equation*}
is given by the following formula.  
\begin{equation}\label{scaldeftorp}
\begin{split}
R = &\frac{n(n-1)}{F^{2}}\Big{[}1-\Big{(}\frac{\p F}{\p r}\Big{)}^{2}-\frac{(\frac{\p F}{\p t})^{2}}{\alpha^{2}}\Big{]}-\frac{2n}{F}\Big{[}\frac{\p^{2} F}{\p r^{2}}+\frac{\frac{\p F}{\p r}.\frac{\p \alpha}{\p r}}{\alpha}\Big{]}\\
&+\frac{2n}{\alpha^{2}F}\Big{[}-\frac{\p^{2} F}{\p t^{2}}+\frac{\frac{\p F}{\p t}.\frac{\p \alpha}{\p t}}{\alpha}\Big{]} - 2\frac{(\frac{\p^{2} \alpha}{\p r^{2}})}{\alpha}.
\end{split}
\end{equation}
This formula is the result of a straightforward, albeit long, calculation. In our case, $F(r,t)$ may be replaced with $f(r)$ where, by abuse of notation, $f$ represents any stage in the homtopy between $\cos$ and the torpedo function $f$ described above. It is obvious that $-1\leq \frac{\p f}{\p r} \leq 0$ and that $\frac{\p^{2} f}{\p r^{2}}\leq 0$. The formula in equation \ref{scaldeftorp} now simplifies to
\begin{equation}\label{scaldeftorp2}
R = \frac{n(n-1)}{f^{2}}\Big{[}1-\Big{(}\frac{\p f}{\p r}\Big{)}^{2}\Big{]}-\frac{2n}{f}\Big{[}\frac{\p^{2} f}{\p r^{2}}+\frac{\frac{\p f}{\p r}.\frac{\p \alpha}{\p r}}{\alpha}\Big{]} - 2\frac{(\frac{\p^{2} \alpha}{\p r^{2}})}{\alpha}.
\end{equation}

The fact that $-1\leq \frac{\p f}{\p r} \leq 0$ means that the first term in this expression is non-negative. Furthermore, when $r$ is away from $\frac{\pi}{2}$, $\frac{\p f}{\p r}>-1$ and so this term is strictly positive. Finally, when $r$ is near $\frac{\pi}{2}$, $f(r)=\cos{r}$ and so the first term is easily seen to equal $n(n-1)$. Hence, the first term is always strictly positive. Non-negativity of the rest of the expression then follows easily from the fact that both $\frac{\p^{2} \alpha}{\p r^{2}}$ and $\frac{\p^{2} f}{\p r^{2}}$ are non-positive, while whenever both $\frac{\p \alpha}{\p r}$ and $\frac{\p f}{\p r}$ are non-zero, they have opposite signs.
\end{proof}

A completely analogous argument gives us the following lemma.

\begin{Lemma}\label{relcornerlemma}
For $n\geq 3$, the metrics $\bar{\bar{g}}_{Dtor}^{n+2}$ and $\bar{\bar{g}}_{tor}^{n+2}$ are  isotopic.
\end{Lemma}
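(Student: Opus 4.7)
The plan is to rerun the two-stage deformation from the proof of Lemma \ref{reltorplemma}, now adapted to the corner setting. Recall that both $\bar{\bar{g}}_{tor}^{n+2}$ and $\bar{\bar{g}}_{Dtor}^{n+2}$ live on the same manifold with corners, constructed by attaching the respective hemi-disk metrics $\gtor^{n+2}(+)$ and $\bar{g}_{Dtor}^{n+2}(+)$ along a common $D^{n+1}$-face to the cylinder $\gtor^{n+1}+dt^{2}$ (respectively $\bar{g}_{Dtor}^{n+1}+ds^{2}$). Hence an isotopy between the two corner metrics can be assembled from two compatible pieces: a hemi-disk isotopy between $\gtor^{n+2}(+)$ and $\bar{g}_{Dtor}^{n+2}(+)$, together with a cylinder isotopy between $\gtor^{n+1}+dt^{2}$ and $\bar{g}_{Dtor}^{n+1}+ds^{2}$ obtained by applying Lemma \ref{reltorplemma} in dimension $n+1$ and crossing with $dt^{2}$.

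For the hemi-disk piece, I would express $\gtor^{n+2}(+)$ in the alternate coordinate description $dr^{2}+\alpha(r,t)^{2}dt^{2}+f(r)^{2}ds_{n}^{2}$ used in the proof of Lemma \ref{reltorplemma}, restricted to the appropriate hemi-disk region. The same two stages then apply: first deform $f(r)=\cos r$ into a torpedo function and replace $\sin r$ inside $\alpha$ by the auxiliary $h(r)$; then adjust the bump $\mu$ in $\alpha$ to produce horizontal stretching near $t=\pi/2$. Because this deformation is expressed in terms of the same $\alpha$ and $f$ on both sides of the glue region, and the cylinder isotopy is just the inductive isotopy crossed with $dt^{2}$, the two pieces match automatically after the standard $C^{2}$-small smoothing interpolation at the glue region.

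The two checks to make are preservation of the corner product structure and of positive scalar curvature. Preservation of the corner structure is automatic: the deformation modifies $\alpha$ and $f$ only on compact $(r,t)$-subregions bounded away from the boundary collars, so the required local form $g+ds^{2}+dt^{2}$ near $\p^{2}Y$ is untouched throughout the isotopy. Positivity on the hemi-disk follows verbatim from the computation leading to formula (\ref{scaldeftorp2}): the first term is strictly positive because $\p f/\p r>-1$ off $r=\pi/2$ and equals $\cos r$ near $r=\pi/2$, and the remaining terms are non-negative thanks to the sign conventions $\ddot f\leq 0$, $\p^{2}\alpha/\p r^{2}\leq 0$, and the opposite-sign structure of $\p\alpha/\p r$ and $\p f/\p r$. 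The hypothesis $n\geq 3$ is used precisely to ensure that the attached cylinder $\bar{g}_{Dtor}^{n+1}+ds^{2}$ (and its $\gtor$-counterpart) has positive scalar curvature; this is the only place where the dimension genuinely enters and is the main obstacle to extending the lemma to lower dimensions.
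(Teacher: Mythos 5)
Your unpacking is exactly what the paper's terse proof intends: the official argument is the single line that the lemma ``follows easily from Lemma \ref{reltorplemma},'' and your decomposition into a hemi-disk piece handled by re-running the $\alpha$/$f$ deformation and a cylinder piece handled by applying Lemma \ref{reltorplemma} one dimension down and crossing with $dt^{2}$ is the natural way to carry that out. The only slip is your account of where $n\geq 3$ enters: the cylinder metrics $\gtor^{n+1}+dt^{2}$ and $\bar{g}_{Dtor}^{n+1}+ds^{2}$ already have positive scalar curvature once $n+1\geq 3$, i.e.\ for $n\geq 2$; what actually forces $n\geq 3$ is that the cylinder isotopy you invoke is Lemma \ref{reltorplemma} applied in dimension $n+1$, where the first term of the analogue of formula (\ref{scaldeftorp2}) carries the coefficient $(n-1)(n-2)$ rather than $n(n-1)$, and that coefficient degenerates at $n=2$.
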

\begin{proof}
This follows easily from Lemma \ref{reltorplemma}. 
\end{proof}

One difficulty in describing isotopies of the metrics above is the obvious dimensional restrictions inherent in our schematic pictures. We close this section with an alternative schematic description of the metrics considered above, which will be very useful when it comes to the main theorem. To save as much ``dimensional space" as possible, it is useful to abbreviate the standard torpedo metric with a picture of a shaded disk, as shown in the upper left picture in Fig. \ref{solidschematic}. Continuing in this vein, we abbreviate the metrics $\bar{g}_{Dtor}^{n}$, $\bar{\bar{g}}_{tor}^{n}$ and $\bar{\bar{g}}_{Dtor}^{n}$ as shown in the upper right, middle left and middle right picture in Fig. \ref{solidschematic}. By retaining the shaded disk as a representation for $\bar{g}_{tor}^{n}$, the metric $\bar{\bar{g}}_{tor}^{n+1}$ can now be described with the ``solid" torpedo schematic shown at the bottom left of  Fig. \ref{solidschematic}. Similarly, $\bar{\bar{g}}_{Dtor}^{n+1}$ is depicted at the bottom right of this figure.

\begin{figure}[!htbp]
\vspace{7cm}
\begin{picture}(0,0)%
\hspace{-3cm}
\includegraphics{Pictures/Solidschematic.eps}%
\end{picture}%
\setlength{\unitlength}{3947sp}%
\begingroup\makeatletter\ifx\SetFigFont\undefined%
\gdef\SetFigFont#1#2#3#4#5{%
  \reset@font\fontsize{#1}{#2pt}%
  \fontfamily{#3}\fontseries{#4}\fontshape{#5}%
  \selectfont}%
\fi\endgroup%
\begin{picture}(2079,1559)(1002,-5227)
\end{picture}%
\caption{Alternative solid schematic description of the metrics shown in Fig. \ref{reldoubletor}}
\label{solidschematic}
\end{figure} 

\subsection{Mixed Torpedo Metrics}

So far, we have mostly dealt with metrics which take the form of a warped product metric $dt^{2}+f(t)^{2}ds_{n-1}^{2}$ on $(0,b)\times S^{n-1}$. The notion of a warped product metric on 
$(0,b)\times S^{n-1}$ generalises to something called a {\it doubly 
warped product metric} on $(0,b)\times S^{p}\times S^{q}$. Here, the 
metric takes the form 
\begin{equation*}
dt^{2}+u(t)^{2}ds_{p}^{2}+v(t)^{2}ds_{q}^{2}, 
\end{equation*}
where $u,v:(0,b)\rightarrow(0,\infty)$ are smooth functions. As before, we can specify certain end point conditions on the functions $u$ and $v$ to obtain a smooth metric on the sphere $S^{n}$; see Part One or \cite{P}. For example, the standard round metric on $S^{n}$ (where $p+q+1=n$) is obtained as the metric induced by the embedding
\begin{equation}\label{map;doublewarpembedding} 
\begin{split}
(0,\frac{\pi}{2})\times{{S}^{p}}\times{{S}^{q}}&\longrightarrow\mathbb{R}^{p+1}\times\mathbb{R}^{q+1}\\
 (t,\phi,\theta)&\longmapsto(\cos{t}.\phi,\sin{t}.\theta),
\end{split}
\end{equation}
and computed as
\begin{equation*} 
\begin{array}{c}
dt^{2}+\cos^{2}(t)ds_{p}^{2}+\sin^{2}(t)ds_{q}^{2}.
\end{array}
\end{equation*} 
In Part One, we specify some conditions on the functions $u$ and $v$ which ensure that the resulting metric has positive scalar curvature. These conditions are then used to construct a path-connected space of psc-metrics on the sphere $S^{n}$. Importantly, this space contains the round metric and so we may allow the values of $p$ and $q$ to vary and still be sure that this space is path-connected. Contained in this space is an important class of sphere metrics on $S^{n}$ called {\em mixed torpedo metrics}. These metrics are constructed as follows.

Appropriate restrictions of the above embedding correspond to the following decomposition of $S^{n}$ into a union of sphere-disk products.
\begin{equation*}
\begin{array}{cl}
S^{n} &= \p D^{n+1},\\
&=\p (D^{p+1}\times D^{q+1}),\\ 
&=S^{p}\times D^{q+1}\cup_{S^{p}\times S^{q}} D^{p+1}\times S^{q}.
\end{array}
\end{equation*}
Equip $S^{p}\times D^{q+1}$ with the product metric 
$\epsilon^{2}ds_{p}^{2}+\bar{g}_{tor}^{q+1}(\delta)$.
Then equip $D^{p+1}\times S^{q}$ with $\gtor^{p+1}(\epsilon)+\delta^{2}ds_{q}^{2}$. 
These metrics glue together smoothly along the common boundary 
$S^{p}\times S^{q}$ to form a smooth metric on $S^{n}$. Such metrics will be known
as a {\it mixed torpedo metrics} on $S^{n}$ and denoted $g_{Mtor}^{p,q}$. In Fig. \ref{fig:mixed torpedo}, we schematically depict the metrics  $g_{Mtor}^{p,q}$ and $g_{Mtor}^{p+1,q-1}$.
\begin{figure}[htbp]
\includegraphics[height=30mm]{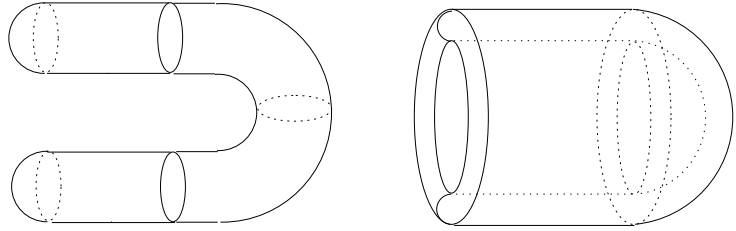}
\caption{The mixed torpedo metrics $g_{Mtor}^{p,q}$ and $g_{Mtor}^{p+1,q-1}$}
\label{fig:mixed torpedo}
\end{figure}
The metric $g_{Mtor}^{p,q}$ can be realised as a doubly warped product metric on $(0,b)\times S^{p}\times S^{q}$, given by the formula
\begin{equation}
\begin{array}{c}
 g_{Mtor}^{p,q}=dt^{2}+f_{\epsilon}(b-t)^{2}ds_{p}^{2}+f_{\delta}(t)^{2}ds_{q}^{2}.
\end{array}
\end{equation}
The fact that these mixed torpedo metrics lie, along with the standard round metric, in a path-connected subspace of $\Riem^{+}(S^{n})$ is proved in Lemma 1.11 of Part One. Given the importance of this fact in our work, we restate this lemma here. 
\begin{Lemma}{\rm [1.11 of Part One]} \label{toriso}
Let $n\geq 3$. For any non-negative integers $p$ and $q$ with $p+q+1=n$ and $p$ or $q\geq2$, the metric $g_{Mtor}^{p,q}$ is isotopic to $ds_{n}^{2}$. In particular, if $q\geq 3$, the metrics $g_{Mtor}^{p,q}$ and $g_{Mtor}^{p+1, q-1}$ are isotopic.
\end{Lemma}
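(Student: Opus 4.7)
The natural strategy is to work entirely within the space of doubly warped product metrics on $S^n$ of the form $dt^{2}+u(t)^{2}ds_{p}^{2}+v(t)^{2}ds_{q}^{2}$ on $(0,b)\times S^{p}\times S^{q}$, since both $ds_n^{2}$ and $g_{Mtor}^{p,q}$ are expressible in this form (with $(u,v)=(\cos t,\sin t)$ after rescaling in the first case and $(u,v)=(f_\epsilon(b-t),f_\delta(t))$ in the second). Part One identifies a list of endpoint/smoothness conditions on $(u,v)$ together with concavity-type curvature conditions (essentially $\ddot u\le 0$, $\ddot v\le 0$, plus bounds on $|\dot u|,|\dot v|\le 1$) which together guarantee both smooth extension over the collapsing $S^p$ and $S^q$ factors at the two poles \emph{and} positive scalar curvature of the resulting metric. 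The plan is therefore to construct a continuous path in this parameter space of warping-function pairs connecting the round pair to the mixed torpedo pair, and to verify that every intermediate pair still satisfies these conditions.

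First I would set up the homotopy in two independent stages, exploiting the product-like structure of the mixed torpedo. In the first stage, I deform $(\cos t,\sin t)$, viewed as defined on $[0,\pi/2]$, to the pair $(f_\epsilon(\pi/2-t),f_\delta(t))$ on the same interval; this is a purely local deformation near each pole, since a torpedo function $f_\delta$ already agrees with $\sin$ on a neighbourhood of $0$ and is just flattened to a constant beyond $\delta\pi/2$. Concretely, one can use a one-parameter family $f_\delta^{(s)}$ interpolating linearly between $\sin$ and $f_\delta$, noting that convex combinations of concave functions are concave and that the pole-smoothness conditions (namely $f_\delta^{(s)}(t)=t+O(t^3)$ near $0$) are preserved throughout. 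In the second stage, I insert an increasingly long cylindrical region where both $u$ and $v$ are constant, by reparametrising $t$ via a family of monotone maps that stretches out the ``flat" middle of each torpedo function; this adjusts the total parameter length to $b$ without affecting the shape near the poles.

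The main obstacle is monitoring positive scalar curvature throughout the deformation. For a doubly warped product, the scalar curvature is a sum of terms of the shape $-2p\ddot u/u - 2q\ddot v/v$ (non-negative by concavity), $\pm\dot u\dot v/(uv)$ terms (which require care), and intrinsic curvature contributions $p(p-1)(1-\dot u^2)/u^2$ and $q(q-1)(1-\dot v^2)/v^2$ from the fibres. The last two terms are where the hypothesis $p\ge 2$ or $q\ge 2$ enters: one of the intrinsic factors $p(p-1)$ or $q(q-1)$ must be strictly positive to dominate the potentially negative cross-term where $\dot u$ and $\dot v$ have opposite signs and the concavity terms vanish (e.g. on the long cylindrical middle stretch). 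A careful termwise estimate, identical in flavour to the scalar-curvature estimate in the proof of Lemma \ref{reltorplemma}, shows that in each stage of the homotopy the intermediate doubly warped product has $R>0$. This is precisely the content that Part One distils into the path-connectedness statement cited before the lemma, and I would invoke it to conclude that $g_{Mtor}^{p,q}$ and $ds_n^2$ lie in the same path component of $\Riem^+(S^n)$.

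Finally, for the second assertion, when $q\ge 3$ both of the pairs $(p,q)$ and $(p+1,q-1)$ satisfy the hypothesis of the first assertion (since $q-1\ge 2$), so both $g_{Mtor}^{p,q}$ and $g_{Mtor}^{p+1,q-1}$ are isotopic to $ds_n^2$, and hence to one another by concatenation of isotopies.
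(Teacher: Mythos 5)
Your proposal reproduces the strategy of Lemma 1.11 of Part One, which the paper merely cites without reproving: realise both $ds_n^2$ and $g_{Mtor}^{p,q}$ as doubly warped products $dt^2+u(t)^2 ds_p^2+v(t)^2 ds_q^2$ and connect them through a family of warping pairs $(u,v)$ satisfying the endpoint smoothness conditions (odd extension at the poles, $\dot u(0)=\dot v(b)=0$, $\dot v(0)=-\dot u(b)=1$) together with $\ddot u,\ddot v\le 0$ and $|\dot u|,|\dot v|\le 1$; these are preserved under convex combination and by stretching the flat middle. This is the same construction as the one the paper is invoking, and the transitivity argument for the second assertion is exactly right.

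One correction of the heuristic, though it does not endanger the conclusion: for the doubly warped product the cross-term in the scalar curvature is $-2pq\,\dot u\dot v/(uv)$, and throughout your deformation $\dot u\le 0$ while $\dot v\ge 0$, so this term is \emph{non-negative} precisely \emph{because} $\dot u$ and $\dot v$ have opposite signs. There is therefore no negative cross-term to dominate, and invoking the magnitude of $p(p-1)$ or $q(q-1)$ against it is the wrong picture. The hypothesis $p\ge 2$ or $q\ge 2$ enters for a simpler reason: on the flat cylindrical stretch of a mixed torpedo, where $\dot u=\dot v=\ddot u=\ddot v=0$, every term of $R$ vanishes except the intrinsic contributions $p(p-1)/u^2+q(q-1)/v^2$, and if both $p,q\le 1$ this sum is identically zero. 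So the ``careful termwise estimate'' should simply observe that every term of $R$ is pointwise non-negative in your class of warping pairs, and that the intrinsic term from the larger-index sphere is strictly positive wherever all the others vanish; no quantitative domination is needed.
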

We will now discuss an analogous version of this lemma for metrics on the disk. We begin by constructing a psc-metric on the disk $D^{n+1}$ which, near the boundary, takes the form $ds^{2}+g_{Mtor}^{p,q}$. 
The disk $D^{n+1}$ decomposes as 
\begin{equation*}
D^{n+1}=D^{p+1}\times D^{q+1}\cup S^{p}\times D^{q+2}. 
\end{equation*}
Schematically, this can be thought of as a solid version of the left hand picture in Fig. \ref{fig:mixed torpedo}. We now equip the $D^{p+1}\times D^{q+1}$ piece, with a product of torpedo metrics $\gtor^{p+1}\times g_{tor}^{q+1}$ and the $S^{p}\times D^{q+2}$ piece, with the metric $ds_{p}^{2}\times \bar{g}_{tor}^{q+2}$ (a product of a round metric and the torpedo metric with corners described above). These pieces glue smoothly together, resulting in the relative mixed torpedo metric $\bar{g}_{Mtor}^{p,q+1}$; see the left hand drawing in Fig. \ref{relmixtorp}.

\begin{figure}[!htbp]
\vspace{-2cm}
\hspace{-3cm}
\begin{picture}(0,0)%
\includegraphics{Pictures/relmixtorp.eps}%
\end{picture}%
\setlength{\unitlength}{3947sp}%
\begingroup\makeatletter\ifx\SetFigFont\undefined%
\gdef\SetFigFont#1#2#3#4#5{%
  \reset@font\fontsize{#1}{#2pt}%
  \fontfamily{#3}\fontseries{#4}\fontshape{#5}%
  \selectfont}%
\fi\endgroup%
\begin{picture}(2079,2559)(1902,-5227)
\put(1050,-3800){\makebox(0,0)[lb]{\smash{{\SetFigFont{10}{8}{\rmdefault}{\mddefault}{\updefault}{\color[rgb]{0,0,0}$\gtor^{p+1}\times \gtor^{q+1}$}%
}}}}
\put(3000,-3700){\makebox(0,0)[lb]{\smash{{\SetFigFont{10}{8}{\rmdefault}{\mddefault}{\updefault}{\color[rgb]{0,0,0}$ds_{p}^{2}\times \gtor^{q+2}$}%
}}}}
\put(5900,-3800){\makebox(0,0)[lb]{\smash{{\SetFigFont{10}{8}{\rmdefault}{\mddefault}{\updefault}{\color[rgb]{0,0,0}${g_{Mtor}^{p,q+1}}{(+)}$}%
}}}}
\put(5900,-4527){\makebox(0,0)[lb]{\smash{{\SetFigFont{10}{8}{\rmdefault}{\mddefault}{\updefault}{\color[rgb]{0,0,0}$g_{Mtor}^{p,q}+ds^{2}$}%
}}}}
\end{picture}%
\caption{Alternative decompositions of the relative mixed torpedo metric $\bar{g}_{Mtor}^{p,q+1}$ on $D^{n+1}$}
\label{relmixtorp}
\end{figure} 

Alternatively, it is useful to consider this metric as obtained in the folloing way. Notice that the mixed torpedo metric $g_{Mtor}^{p,q+1}$ on $S^{n+1}$ has an equator sphere $S^{n}$ on which the induced metric is $g_{Mtor}^{p,q}$. We will denote by ${g_{Mtor}^{p,q+1}}{(+)}=g_{Mtor}^{p,q+1}|_{D_{+}^{n+1}}$, the restriction of $g_{Mtor}^{p,q+1}$ to the upper hemisphere of $S^{n+1}$.  We now glue together the cylinder metric $ds^{2}+g_{Mtor}^{p,q}$ on $S^{n}\times I$ and the hemisphere metric ${g_{Mtor}^{p,q+1}}{(+)}$, by identifying the top of the cylinder with the boundary of the disk $D_{+}^{n+1}$ (making appropriate smoothing adjustments near the joining slice) to obtain the metric $\bar{g}_{Mtor}^{p,q+1}$; see the right hand drawing in Fig. \ref{relmixtorp}. 

As a consequence of Lemma \ref{toriso} above, we obtain the following lemma.
\begin{Lemma}\label{relmixtorplemma}
Let $n\geq 3$, $p+q+1=n$ and $p$ or $q\geq2$. The metrics $\bar{g}_{Mtor}^{p,q}$ and $\bar{g}_{tor}^{n+1}$ are isotopic in $\Riem^{+}(D^{n+1}, \p D^{n+1}=S^{n})$.
\end{Lemma}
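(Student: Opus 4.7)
The plan is to produce the desired isotopy by combining the sphere-level isotopy of Lemma \ref{toriso} on the boundary with a compatible isotopy of the interior of $D^{n+1}$. Recall from Fig.\ \ref{relmixtorp} that $\bar{g}_{Mtor}^{p,q}$ can be realised as the upper-hemisphere piece $g_{Mtor}^{p,q+1}(+)$ of a mixed torpedo on $S^{n+1}$ glued (with the standard smoothing) to the cylinder $g_{Mtor}^{p,q}+ds^{2}$; analogously $\bar{g}_{tor}^{n+1}$ arises as a round hemisphere of $S^{n+1}$ glued to the round cylinder $ds_{n}^{2}+ds^{2}$.

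First I would apply Lemma \ref{toriso} in dimension $n$ to obtain a path $\{h_{s}\}_{s\in [0,1]}$ in $\Riem^{+}(S^{n})$ from $g_{Mtor}^{p,q}$ to $ds_{n}^{2}$, entirely through $O(p+1)\times O(q+1)$-symmetric doubly-warped product metrics. I would then apply Lemma \ref{toriso} in dimension $n+1$ to obtain an isotopy $\{H_{s}\}_{s\in[0,1]}$ in $\Riem^{+}(S^{n+1})$ from $g_{Mtor}^{p,q+1}$ to $ds_{n+1}^{2}$ through the corresponding doubly-warped family. The key compatibility observation is that, because both isotopies are realised by homotoping the doubly-warped profile functions, one can arrange the $H_{s}$ so that the equatorial slice $S^{n}\subset S^{n+1}$ carries the induced metric $h_{s}$ for every $s$. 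Restricting $H_{s}$ to the upper hemisphere then yields a continuous family $\{H_{s}(+)\}$ of psc-hemisphere metrics on $D^{n+1}$ with boundary $h_{s}$.

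Next I would glue each $H_{s}(+)$ to the cylinder $h_{s}+ds^{2}$ using the same smoothing procedure by which $\bar{g}_{Mtor}^{p,q}$ and $\bar{g}_{tor}^{n+1}$ were constructed. Continuity of the warping profiles in $s$, together with positivity of scalar curvature for each individual member of the family (guaranteed by Proposition 1.7 of Part One and the doubly-warped scalar curvature formula), delivers a continuous path $\{\bar{H}_{s}\}$ in $\Riem^{+}(D^{n+1},\p D^{n+1}=S^{n})$. By construction $\bar{H}_{0}=\bar{g}_{Mtor}^{p,q}$ and $\bar{H}_{1}$ is a round hemisphere glued to a round cylinder; if necessary, a final application of Lemma \ref{reltorplemma} identifies this endpoint with $\bar{g}_{tor}^{n+1}$ up to isotopy.

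The main obstacle is arranging that the smoothing collar, where the hemisphere meets the cylinder, is uniform in $s$ and everywhere supports a psc-metric. This is handled exactly as in the proofs of Lemmas \ref{reltorplemma} and \ref{toriso}: the smoothing collar is chosen to lie in the region where the relevant warping profile is already cylindrical for every $s$, so the interpolation near the joining slice is trivial and the curvature estimate used in Lemma \ref{reltorplemma} (analogous to equation (\ref{scaldeftorp})) applies uniformly.
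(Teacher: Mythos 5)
Your proposal is correct and follows the same route the paper takes: the paper's proof of Lemma \ref{relmixtorplemma} simply states that it "follows from Lemma \ref{toriso} combined with Lemma \ref{reltorplemma}," and your argument is precisely an unpacking of that citation -- using the sphere-level isotopies of Lemma \ref{toriso} (in dimensions $n$ and $n+1$, compatibly along the equator), restricting to the hemisphere, gluing slice-wise to the cylinder, and cleaning up the endpoint with Lemma \ref{reltorplemma}.
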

\begin{proof}
This follows from Lemma \ref{toriso} combined with Lemma \ref{reltorplemma}.
\end{proof}
\begin{Corollary}\label{torisomix}
Let $n\geq 4$, $p+q+1=n$ and $q\geq3$. The metrics $\bar{g}_{Mtor}^{p,q+1}$ and $\bar{g}_{Mtor}^{p+1,q}$ are isotopic in $\Riem^{+}(D^{n+1}, S^{n})$.
\end{Corollary}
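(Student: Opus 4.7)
The plan is to reduce this statement directly to Lemma \ref{relmixtorplemma} by passing through a common reference metric and invoking transitivity of the isotopy relation in $\Riem^{+}(D^{n+1},S^{n})$. Specifically, I would aim to show that both $\bar{g}_{Mtor}^{p,q+1}$ and $\bar{g}_{Mtor}^{p+1,q}$ lie in the same path component as the standard relative torpedo metric $\bar{g}_{tor}^{n+1}$, and then concatenate the two resulting paths.

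The first step is to verify the index hypotheses of Lemma \ref{relmixtorplemma} for both relevant pairs. The lemma requires the index pair appearing on the mixed torpedo to satisfy the condition that at least one index is $\geq 2$. Under the hypotheses $p+q+1=n$, $n\geq 4$ and $q\geq 3$, the pair $(p,q+1)$ has $q+1\geq 4\geq 2$, and the pair $(p+1,q)$ has $q\geq 3\geq 2$, so both pairs qualify. Thus Lemma \ref{relmixtorplemma} produces an isotopy in $\Riem^{+}(D^{n+1},S^{n})$ from $\bar{g}_{Mtor}^{p,q+1}$ to $\bar{g}_{tor}^{n+1}$ and, separately, an isotopy from $\bar{g}_{Mtor}^{p+1,q}$ to $\bar{g}_{tor}^{n+1}$.

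Concatenating the first isotopy with the reverse of the second supplies the desired isotopy from $\bar{g}_{Mtor}^{p,q+1}$ to $\bar{g}_{Mtor}^{p+1,q}$. Observe that along this concatenated path the induced boundary metric on $S^{n}$ will vary (beginning at $g_{Mtor}^{p,q}$, passing through an intermediate family of sphere metrics, and ending at $g_{Mtor}^{p+1,q-1}$); this is permissible because the ambient space is $\Riem^{+}(D^{n+1},S^{n})$, where only the existence of a product structure near the boundary is required, not a fixed boundary metric. There is no serious obstacle in this argument: the substantive geometric content has already been packaged into Lemma \ref{relmixtorplemma}, which in turn rests on the sphere-level result Lemma \ref{toriso} combined with the relative torpedo isotopy of Lemma \ref{reltorplemma}. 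The only thing to be careful about is bookkeeping the admissibility of the index dimensions, which is what the hypothesis $q\geq 3$ (and hence $n\geq 4$) is designed to ensure.
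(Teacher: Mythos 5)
Your proposal is correct and takes the same route the paper intends: Corollary \ref{torisomix} is obtained by applying Lemma \ref{relmixtorplemma} to each of $\bar{g}_{Mtor}^{p,q+1}$ and $\bar{g}_{Mtor}^{p+1,q}$ (the hypothesis $q\geq 3$ guarantees the index condition in both cases), and then concatenating the resulting paths to $\bar{g}_{tor}^{n+1}$. One small factual slip in your aside: as the paper sets things up (see Fig.\ \ref{relmixtoralt} and Lemma \ref{reltoriso}), \emph{both} $\bar{g}_{Mtor}^{p,q+1}$ and $\bar{g}_{Mtor}^{p+1,q}$ restrict to the same boundary metric $g_{Mtor}^{p,q}$ on $S^{n}$; the concatenated path does not end at $g_{Mtor}^{p+1,q-1}$. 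Your broader observation is still the right one, though: along the concatenation the boundary metric does vary (passing through a round torpedo boundary at the intermediate reference metric $\bar{g}_{tor}^{n+1}$), which is why the conclusion lands in $\Riem^{+}(D^{n+1},S^{n})$ and not in the relative space; the strengthening to a relative isotopy with the boundary metric $g_{Mtor}^{p,q}$ fixed is precisely the content of the subsequent Lemma \ref{reltoriso}.
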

Finally, using the fact that the isotopy is slicewise near the boundary we obtain the following important lemma.
\begin{Lemma}\label{reltoriso}
Let $n\geq 4$, $p+q+1=n$ and $q\geq3$. The metrics $\bar{g}_{Mtor}^{p,q+1}$ and $\bar{g}_{Mtor}^{p+1,q}$ are isotopic, relative to the metric $g_{Mtor}^{p,q}$ on the boundary $S^{n}$, in $\Riem^{+}(D^{n+1}, S^{n})$.
\end{Lemma}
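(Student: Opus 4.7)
The plan is to upgrade the non-relative isotopy supplied by Corollary \ref{torisomix} to a relative one, exploiting the fact that each metric in the isotopy already has a slicewise product structure near the boundary. Concretely, the isotopy $G_s$ produced by Corollary \ref{torisomix} lies in $\Riem^+(D^{n+1},S^n)$, so in a collar $S^n\times[0,\epsilon]$ of the boundary it takes the form $G_s=h_s+dt^2$, where the boundary path $s\mapsto h_s$ is precisely the psc-isotopy on $S^n$ from Lemma \ref{toriso}. The only obstruction to $G_s$ being a \emph{relative} isotopy is this varying boundary term.

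To absorb this variation, I would invoke Lemma 2.1 of \cite{Walsh1} (isotopy implies concordance). That lemma produces, from the path $h_s$, a psc-metric $H$ on the cylinder $S^n\times[0,L]$ which is a product near both ends, with slice metrics reparametrising $h_s$ and with $H|_{\tau=0}=g_{Mtor}^{p,q}$. Now choose a smooth function $\lambda:[0,1]\to[0,L]$ with $\lambda(0)=0$ and $\lambda(1)=L$. For each $s$, shrink the collar of $G_s$ slightly and glue on the sub-concordance $H|_{S^n\times[0,\lambda(s)]}$ (reversed in orientation) at the boundary; after reidentifying the enlarged disk with $D^{n+1}$ via an $s$-dependent diffeomorphism that absorbs the new collar into the interior, one obtains a smooth family
\[
\tilde G_s\in\Riem^+\bigl(D^{n+1},(S^n,g_{Mtor}^{p,q})\bigr).
\]
By construction $\tilde G_s$ has fixed boundary $g_{Mtor}^{p,q}$ for every $s$.

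At $s=0$ the attached collar is empty, so $\tilde G_0=\bar g_{Mtor}^{p,q+1}$. At $s=1$ the full concordance $H$ has been appended to $\bar g_{Mtor}^{p+1,q}$; a final relative isotopy, stretching or contracting the cylindrical collar (standard for mixed-torpedo metrics, since they come equipped with a product collar whose length can be varied through psc-metrics rel boundary), identifies $\tilde G_1$ with $\bar g_{Mtor}^{p+1,q}$. The principal technical point is to ensure that the family $\tilde G_s$ depends smoothly on $s$ and has uniformly positive scalar curvature as the attached collar grows; both are standard consequences of the slow-traversal lemma from Part One, so no genuinely new analytic work is required beyond carefully packaging the slicewise product structure of $G_s$ with the concordance $H$.
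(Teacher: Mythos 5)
Your proposal takes essentially the same route as the paper: both observe that the non-relative isotopy from Corollary~\ref{torisomix} has a slicewise product form near the boundary, and both convert it to a relative isotopy by a collar trick relying on the slow-traversal lemma of Part One. The paper phrases this as a direct slicewise warped-product adjustment in a collar (invoking Lemma~1.3 of Part One); you phrase it as building a concordance $H$ from the boundary path $h_s$ (via Lemma~2.1 of Part One) and successively gluing partial slices $H|_{[0,\lambda(s)]}$ onto $G_s$. These are two packagings of the same mechanism, so the proposal is in the spirit of the published argument.

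There is, however, a gap in your treatment of the $s=1$ endpoint. Your $\tilde G_1$ is $\bar g_{Mtor}^{p+1,q}$ with the \emph{entire} reversed concordance $H$ glued on, and you assert that stretching or contracting the cylindrical collar identifies this with $\bar g_{Mtor}^{p+1,q}$. Collar-length adjustment only absorbs a \emph{trivial} product collar $g_{Mtor}^{p,q}+d\tau^2$; it cannot absorb $H$, which is a concordance built from the non-constant loop $h_s$. What is actually required is that $H$ be relative isotopic to the trivial product concordance. This is not a consequence of your ``stretching'' remark; it uses the fact that the boundary trace $h_s$ of the isotopy from Corollary~\ref{torisomix} is explicitly null-homotopic rel endpoints in $\Riem^{+}(S^n)$ --- that isotopy is a concatenation of two applications of Lemma~\ref{relmixtorplemma}, both of whose boundary traces are the same canonical path from $g_{Mtor}^{p,q}$ to the boundary metric of the torpedo, so $h_s$ has the form $\alpha\cdot\alpha^{-1}$. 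With that null-homotopy one can fill the two-parameter family $(t,u)\mapsto g_{t,u}$ on the collar and the construction closes up. The paper's proof is also terse on precisely this point, deferring to Part One, so you are keeping company with the source; but the ``stretch the collar'' justification you offer is not the correct reason and should be replaced by the null-homotopy argument (or an explicit reference to where Part One handles it).
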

\begin{proof}
As the isotopy between $\bar{g}_{Mtor}^{p,q+1}$ and $\bar{g}_{Mtor}^{p+1,q}$ is slicewise near the boundary and as both of these metrics restrict to $g_{Mtor}^{p,q}$, it makes sense to construct an isotopy $\bar{g}_t$, with $t\in I$, which is a product $g_{Mtor}^{p,q}+ds^{2}$ near the boundary, throughout. This is done using Lemma 1.3 from Part One. Essentially we perform the entire isotopy outside of a collar neighbourhood of the boundary. Near this collar neighbourhood we smoothly adjust the metric along slices. Thus at each time $t$, we have a metric $\bar{g}_t$ which takes the form of a warped product of the form $\bar{g}_t = g_t(s)+ds^{2}$ on a collar neighbourhood of the boundary, diffeomorphic to $S^{n}\times I$. Furthermore, near the boundary, i.e. on $S^{n}\times [0,\epsilon]$ for some $\epsilon>0$, $g_t(s)=g_{Mtor}^{p,q}$. Positivity of the scalar curvature is maintained by an appropriate smooth scaling of the metric in the $s$ direction, as described in Lemma 1.3 of Part One.
\end{proof}

It is helpful to schematically compare the metrics described in Corollary \ref{reltoriso}. We do this in Fig. \ref{relmixtoralt} below. The fact that these metrics are isotopic relative to $g_{Mtor}^{p,q}$, will play an important role later on.

\begin{figure}[!htbp]
\vspace{-2cm}
\hspace{-3cm}
\begin{picture}(0,0)%
\includegraphics{Pictures/relmixtoralt.eps}%
\end{picture}%
\setlength{\unitlength}{3947sp}%
\begingroup\makeatletter\ifx\SetFigFont\undefined%
\gdef\SetFigFont#1#2#3#4#5{%
  \reset@font\fontsize{#1}{#2pt}%
  \fontfamily{#3}\fontseries{#4}\fontshape{#5}%
  \selectfont}%
\fi\endgroup%
\begin{picture}(2079,2559)(1902,-5227)

\put(1150,-3800){\makebox(0,0)[lb]{\smash{{\SetFigFont{10}{8}{\rmdefault}{\mddefault}{\updefault}{\color[rgb]{0,0,0}${g_{Mtor}^{p,q+1}}{(+)}$}%
}}}}
\put(1050,-4727){\makebox(0,0)[lb]{\smash{{\SetFigFont{10}{8}{\rmdefault}{\mddefault}{\updefault}{\color[rgb]{0,0,0}$g_{Mtor}^{p,q}+ds^{2}$}%
}}}}
\put(5850,-3800){\makebox(0,0)[lb]{\smash{{\SetFigFont{10}{8}{\rmdefault}{\mddefault}{\updefault}{\color[rgb]{0,0,0}${g_{Mtor}^{p+1,q}}{(+)}$}%
}}}}
\put(5850,-4727){\makebox(0,0)[lb]{\smash{{\SetFigFont{10}{8}{\rmdefault}{\mddefault}{\updefault}{\color[rgb]{0,0,0}$g_{Mtor}^{p,q}+ds^{2}$}%
}}}}

\end{picture}%
\caption{The metrics $\bar{g}_{Mtor}^{p,q+1}$ (left) and $\bar{g}_{Mtor}^{p+1,q}$ (right) on $D^{n+1}$}
\label{relmixtoralt}
\end{figure}

We close this section by generalising the above construction one step further to manifolds with corners. In the previous section, we thought of a torpedo metric as the result of slicing a round sphere into two round hemispheres and then (after some smoothing adjustments) attaching one of these hemispheres to a product of its own equator. The metric $\bar{g}_{Mtor}^{p,q}$ is obtained by applying this procedure to a hemisphere $g_{Mtor}^{p+1,q}$. We will now apply an analogous procedure to the metric $\bar{g}_{Mtor}^{p,q+1}$ to obtain a mixed torpedo metric with corners on the disk $D^{n+2}$.

We now consider the metric $\bar{g}_{Mtor}^{p,q+1}$ on $D^{n+2}$. Recall that, near the boundary, this metric takes the form $ds^{2}+g_{Mtor}^{p,q+1}$, where $g_{Mtor}^{p,q+1}$ is the mixed torpedo metric on $S^{n+1}$. Applying an entirely analogous construction to this metric, we obtain the metric $\bar{\bar{g}}_{Mtor}^{p,q}$ depicted in Fig. \ref{solidrelmixtorp}. In other words, by slicing along the ``equator" metric $\bar{g}_{Mtor}^{p,q}$, we can divide $\bar{g}_{Mtor}^{p,q+1}$ into two equal parts $\bar{g}_{Mtor}^{p,q+1}(\pm)$. Then, after suitable smoothing adjustments, we can attach $\bar{g}_{Mtor}^{p,q+1}(+)$ to the cylinder metric $\bar{g}_{Mtor}^{p,q}+ ds^{2}$ in the obvious way, to obtain the desired metric. Alternatively, we may decompose $D^{n+2}$ into $D^{p+1}\times D^{q+2}\cup S^{p}\times D^{q+3}$ and obtain $\bar{\bar{g}}_{Mtor}^{p,q}$ as the union 
\begin{equation*}
\bar{\bar{g}}_{Mtor}^{p,q}=g_{tor}^{p+1}+g_{tor}^{q+2}\cup ds_{p}^{2}+ \bar{g}_{tor}^{q+3}. 
\end{equation*}
\begin{figure}[!htbp]
\vspace{-1cm}
\hspace{-4cm}
\begin{picture}(0,0)%
\includegraphics{Pictures/Solidrelmixtorp.eps}%
\end{picture}%
\setlength{\unitlength}{3947sp}%
\begingroup\makeatletter\ifx\SetFigFont\undefined%
\gdef\SetFigFont#1#2#3#4#5{%
  \reset@font\fontsize{#1}{#2pt}%
  \fontfamily{#3}\fontseries{#4}\fontshape{#5}%
  \selectfont}%
\fi\endgroup%
\begin{picture}(3079,2559)(1902,-5227)
\end{picture}%
\put(-4000, 1186){\makebox(0,0)[lb]{\smash{{\SetFigFont{10}{8}{\rmdefault}{\mddefault}{\updefault}{\color[rgb]{0,0,0}$\gtor^{p+1}\times g_{tor}^{q+2}$}%
}}}}
\put(-1650, 0886){\makebox(0,0)[lb]{\smash{{\SetFigFont{10}{8}{\rmdefault}{\mddefault}{\updefault}{\color[rgb]{0,0,0}$ds_p^{2}+\bar{g}_{tor}^{q+3}$}%
}}}}
\put(914, 1800){\makebox(0,0)[lb]{\smash{{\SetFigFont{10}{8}{\rmdefault}{\mddefault}{\updefault}{\color[rgb]{0,0,0}$\bar{g}_{Mtor}^{p,q+1}(+)$}%
}}}}
\put(1214, 0586){\makebox(0,0)[lb]{\smash{{\SetFigFont{10}{8}{\rmdefault}{\mddefault}{\updefault}{\color[rgb]{0,0,0}$ds^{2}+\bar{g}_{Mtor}^{p,q}$}%
}}}}
\caption{Alternative decompositions of the metric $\bar{\bar{g}}_{Mtor}^{p,q}$ on $D^{n+2}$}
\label{solidrelmixtorp}
\end{figure}

\begin{Remark}
For dimensional reasons it is difficult to find a suitable schematic to represent this metric. It seems best to think of a ``solid" version of the relative mixed torpedo metric from Fig. \ref{relmixtorp}. The pair of shaded vertical strips on the left picture schematically represents a product $ds_p^{2}+\bar{g}_{tor}^{q+2}$. The shaded piece at the bottom of the right picture schematically represents the the metric $\bar{g}_{Mtor}^{p,q}$.
\end{Remark}

\begin{Lemma}\label{mixcorners}
Let $n\geq 3$, $p+q+1=n$ and $p$ or $q\geq 2$. The metric $\bar{\bar{g}}_{Mtor}^{p,q}$ on $D^{n+2}$ is isotopic to the mixed torpedo metric with corners $\bar{\bar{g}}_{tor}^{n+2}$. In particular, if $q\geq 3$ the metrics $\bar{\bar{g}}_{Mtor}^{p,q}$ and $\bar{\bar{g}}_{Mtor}^{p+1,q-1}$ are isotopic.
\end{Lemma}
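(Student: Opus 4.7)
The plan is to run the same argument used for Lemma \ref{relcornerlemma}, with Lemma \ref{reltorplemma} replaced by its mixed-torpedo analogue Lemma \ref{relmixtorplemma}. Recall that $\bar{\bar{g}}_{Mtor}^{p,q}$ is built from the disk metric $\bar{g}_{Mtor}^{p,q+1}$ on $D^{n+2}$ by slicing along its equatorial $D^{n+1}$ (which carries $\bar{g}_{Mtor}^{p,q}$), keeping the upper hemidisk $\bar{g}_{Mtor}^{p,q+1}(+)$, and attaching to it the cylinder $\bar{g}_{Mtor}^{p,q}+ds^{2}$ with suitable smoothing. The metric $\bar{\bar{g}}_{tor}^{n+2}$ is obtained by the same recipe applied to $\bar{g}_{tor}^{n+2}$, whose equator slice is $\bar{g}_{tor}^{n+1}$. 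Thus it suffices to produce a $t$-parameter family of such slice-and-attach data linking the two endpoints.

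The key ingredient is Lemma \ref{relmixtorplemma}, which provides an isotopy $\{\bar{g}_t\}$ in $\Riem^{+}(D^{n+2},S^{n+1})$ connecting $\bar{g}_{Mtor}^{p,q+1}$ to $\bar{g}_{tor}^{n+2}$. Applying the same lemma one dimension lower (the hypothesis ``$p$ or $q\geq 2$'' is inherited) gives an isotopy $\{\bar{g}_t^{\mathrm{eq}}\}$ in $\Riem^{+}(D^{n+1},S^{n})$ connecting the equator metrics $\bar{g}_{Mtor}^{p,q}$ and $\bar{g}_{tor}^{n+1}$. Since the isotopy in Lemma \ref{relmixtorplemma} is constructed from the doubly-warped product deformations of Lemma \ref{toriso}, whose restrictions to equatorial sub-spheres are again doubly-warped products of the expected form, the ambient isotopy $\{\bar{g}_t\}$ may be arranged to restrict on the equatorial slice to $\{\bar{g}_t^{\mathrm{eq}}\}$.

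With this equator-compatible pair of isotopies in hand, we perform the slice-and-attach construction continuously in $t$: at time $t$ we take the upper hemidisk $\bar{g}_t(+)$ and glue to it the cylinder $\bar{g}_t^{\mathrm{eq}}+ds^{2}$, carrying out the same smoothing used in the original construction but now parameterised by $t$. The result is a family $\{\bar{\bar{g}}_t\}$ of corner metrics on $D^{n+2}$ with $\bar{\bar{g}}_0=\bar{\bar{g}}_{Mtor}^{p,q}$ and $\bar{\bar{g}}_1=\bar{\bar{g}}_{tor}^{n+2}$. Positivity of scalar curvature is preserved at every stage because each $\bar{\bar{g}}_t$ is obtained from psc pieces by exactly the gluing used originally. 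This yields an isotopy in $\Riem^{+}(D^{n+2},\p D^{n+2},\p^{2}D^{n+2})$, establishing the first assertion.

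The ``in particular'' statement then follows formally: when $q\geq 3$ both $(p,q)$ and $(p{+}1,q{-}1)$ satisfy the hypothesis of the first part, so $\bar{\bar{g}}_{Mtor}^{p,q}$ and $\bar{\bar{g}}_{Mtor}^{p+1,q-1}$ are each isotopic to $\bar{\bar{g}}_{tor}^{n+2}$, and concatenating the isotopies gives the claim. The main obstacle is the equator-compatibility step in the second paragraph: one has to track the isotopy of Lemma \ref{relmixtorplemma} back through the doubly-warped product construction to verify that the equatorial restriction indeed reproduces the one-dimension-lower isotopy; everything else is a routine parameterised version of the original gluing.
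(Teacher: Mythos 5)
The paper's own proof is a one-liner (``This is a straightforward generalisation of Lemma \ref{relmixtorplemma}''), so it does not commit to a method; your proposal supplies a reasonable and correct way of carrying out that generalisation. That said, the approach you take differs in emphasis from the one the paper seems to have in mind. The chain of lemmas the paper points to (\ref{relmixtorplemma}, hence \ref{toriso} and \ref{reltorplemma}) proceeds by writing down an explicit multiply-warped product deformation of the relevant metric and verifying positive scalar curvature from a closed-form expression such as (\ref{scaldeftorp}); for the corner version one simply reads off the deformation in coordinates where the corner is a coordinate locus, exactly as Lemma \ref{relcornerlemma} is implicitly obtained from Lemma \ref{reltorplemma}. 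You instead assemble the corner isotopy from a hemidisk isotopy and a cylinder isotopy and glue, which is a more structural argument but pushes the real content into the ``equator-compatibility'' step you flag at the end: one needs to know that the $D^{n+2}$-isotopy of Lemma \ref{relmixtorplemma} restricts along the equatorial $D^{n+1}$ to the one-dimension-lower isotopy. This is in fact true precisely because the deformation in Lemma \ref{reltorplemma} is of the form $dr^{2}+\alpha^{2}dt^{2}+F^{2}ds_{n}^{2}$ and its variants, which visibly restrict along equatorial $S^{n-1}\subset S^{n}$ (and along the $t=\tfrac{\pi}{2}$ slice) to the same kind of expression one dimension lower; but once you have noticed this, you have essentially recovered the explicit-formula route, and it is arguably cleaner to phrase the whole proof that way rather than via hemidisk-plus-cylinder gluing. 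Either way the ``in particular'' statement follows by concatenation exactly as you say. So: correct, and not a gap, but the slice-and-glue framing adds an intermediate step (equator-compatibility) that the coordinate route makes automatic.
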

\begin{proof}
This is a straightforward generalisation of Lemma \ref{relmixtorplemma}.
\end{proof}

As in the case of Lemma \ref{reltoriso} (and using an almost identical argument), we obtain the following Lemma.
\begin{Lemma}\label{relativemixcorners}
Let $n\geq 3$, $p+q+1=n$ and let $q\geq 3$. Then the metrics $\bar{\bar{g}}_{Mtor}^{p,q}$ and $\bar{\bar{g}}_{Mtor}^{p+1,q-1}$ are relative isotopic, relative to the metric ${\bar{g}}_{Mtor}^{p,q-1}$.
\end{Lemma}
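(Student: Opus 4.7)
The plan is to imitate the proof of Lemma \ref{reltoriso} verbatim, one dimension up, promoting the non-relative isotopy supplied by Lemma \ref{mixcorners} to one which is stationary near the designated boundary face.

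First I would invoke Lemma \ref{mixcorners} to obtain a continuous path $\bar{\bar{g}}_t$, $t\in[0,1]$, in $\Riem^{+}(D^{n+2},\partial D^{n+2},\partial^{2}D^{n+2})$ connecting $\bar{\bar{g}}_{Mtor}^{p,q}$ at $t=0$ to $\bar{\bar{g}}_{Mtor}^{p+1,q-1}$ at $t=1$. By construction, both endpoint metrics decompose (after smoothing) as $\bar{g}_{Mtor}^{p,q+1}(+)$ glued to a cylinder $\bar{g}_{Mtor}^{p,q-1}+ds^{2}$ over the ``equator'' face carrying $\bar{g}_{Mtor}^{p,q-1}$, and in particular they restrict to the \emph{same} relative mixed torpedo metric $\bar{g}_{Mtor}^{p,q-1}$ on that face; the same is automatically true of the other boundary face and the corner, since the isotopy in Lemma \ref{mixcorners} is through metrics with corners of the prescribed product form.

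Next I would upgrade $\bar{\bar{g}}_t$ to an isotopy which is stationary on a collar of the distinguished face, by repeating the argument in the proof of Lemma \ref{reltoriso}. Concretely, choose a collar neighbourhood of the face carrying $\bar{g}_{Mtor}^{p,q-1}$, diffeomorphic to $D^{n+1}\times I$, so that each $\bar{\bar{g}}_t$ has the warped form $\bar{\bar{g}}_t = h_t(s)+ds^{2}$ on this collar (this warping is guaranteed by the product-near-boundary condition defining $\Riem^{+}(D^{n+2},\partial D^{n+2},\partial^{2}D^{n+2})$). Apply Lemma~1.3 of Part One, which allows us to replace $h_t(s)$ by a smooth family agreeing with $\bar{g}_{Mtor}^{p,q-1}$ on a smaller sub-collar $[0,\epsilon]$ and with $h_t(s)$ outside the original collar, while preserving positivity of the scalar curvature via a uniform rescaling in the $s$-direction. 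The essential input is that $\bar{g}_{Mtor}^{p,q-1}$ has strictly positive scalar curvature (here we use $q-1\geq 2$, hence the hypothesis $q\geq 3$), so the rescaling trick of Lemma~1.3 of Part One applies.

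Finally I would verify that the resulting path still lies in $\Riem^{+}\bigl(D^{n+2},(\partial D^{n+2},\,\cdot\,),(\partial^{2}D^{n+2},\,\cdot\,)\bigr)$: the product-with-corners structure is preserved by construction, the isotopy now fixes $\bar{g}_{Mtor}^{p,q-1}$ pointwise on the designated face (and hence its restriction to the corner), and scalar curvature remains positive at every stage. The main obstacle is the bookkeeping required to ensure compatibility at the corner: we must check that the collar straightening near the face carrying $\bar{g}_{Mtor}^{p,q-1}$ is performed consistently with the already-existing product structure along the \emph{other} face, so that the resulting metric remains a smooth metric with corners. This is handled exactly as in the proof of Lemma \ref{reltoriso}, with the additional observation that $\bar{g}_{Mtor}^{p,q-1}$ itself has a product-with-corners form so the collar construction respects all boundary strata simultaneously.
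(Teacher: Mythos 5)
Your proposal is correct and takes essentially the same route as the paper, whose entire proof is the one-line remark that it is ``almost identical to that of Lemma \ref{reltoriso}'': one takes the non-relative isotopy of Lemma \ref{mixcorners}, notes that its endpoints share the designated face metric, and then applies Lemma~1.3 of Part One on a collar to make the path stationary there, exactly as you describe (including the extra corner-compatibility bookkeeping, which you are right to flag even though the paper leaves it implicit). The only small blemish is an index slip in your middle paragraph — you write that both endpoints decompose as $\bar{g}_{Mtor}^{p,q+1}(+)$ glued to a cylinder over $\bar{g}_{Mtor}^{p,q-1}$, which jumps two levels of the index rather than one; the cylinder face over which you perform the collar straightening should carry the metric one index below the hemisphere you attach.
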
 
\begin{proof}
The proof is almost identical to that of Lemma \ref{reltoriso}.
\end{proof}
 
\subsection{Admissible Morse Triples} \label{admreview}
Let $\{ W^{n+1};X_0,X_1\}$ be a smooth compact cobordism. Recall, this means that $W$ is a smooth compact $(n+1)$-dimensional manifold with boundary $\p W=X_0 \sqcup X_1$, the disjoint union of closed smooth $n$-manifolds $X_0$ and $X_1$. Let $\F=\F(W)$ denote the space of smooth functions $f:W\rightarrow I$ satisfying $f^{-1}(0)=X_0$ and $f^{-1}(1)=X_1$, and having no critical points near $\p {W}$. The space $\F$ is a subspace of the space of smooth functions on $W$ with its standard $C^{\infty}$ topology; see Chapter 2 of \cite{Hirsch} for the full definition. A critical point $w\in W$ of a smooth function $f:W\rightarrow I$, is a Morse critical point if, near $w$, the map $f$ is locally equivalent to the map 
\begin{equation*} \label{Morsequadratic}
\begin{split} 
 \mathbb{R}^{n+1}&\longrightarrow\mathbb{R}\\
 x&\longmapsto -\sum_{i=1}^{\lambda}{x_i}^{2}+\sum_{i=\lambda+1}^{n+1}{x_i}^{2}.
\end{split}
\end{equation*} 
The integer $\lambda$ is called the {\em Morse index} of $w$ and is an invariant of the critical point. A function $f\in\F$ is a {\em Morse function} if every critical point of $f$ is a Morse critical point. We will assume also that the set of critical points of $f$, denoted $\Sigma f$, is contained in the interior of $W$. Furthemore, we say that a Morse function $f$ is
\emph{admissible} if all of its critical points have index at most
$(n-2)$ (where $\dim W=n+1$). We denote by $\Mor(W)$ and $\Mor^{adm}(W)$, the spaces of Morse and admissible Morse functions on $W$ respectively. 

By equipping $W$ with a Riemannian metric $\m$, we can define ${\grad}_{\m}f$, the gradient vector field for $f$ with respect to $\m$. More generally, we define {\em gradient-like} vector fields on $W$ with respect to $f$ and $\m$, as follows.  
\begin{Definition}
\rm{
A {\em gradient-like} vector field with respect to $f$ and $\m$ is a vector field $V$ on $W$ satisfying the following properties.
\begin{enumerate}
\item[{\bf (1)}] $df_{x}(V_x)>0$ when $x$ is not a critical point of $f$.
\item[{\bf (2)}] Each critical point $w$ of $f$ lies in a neighbourhood $U$ so that for all $x\in U$, $V_x={\grad}_{\m}f(x)$. 
\end{enumerate}
}
\end{Definition} 

For our purposes, we impose a minor compatibility condition on the metrics we wish to work with.

\begin{Definition}\label{compat_metric}
 \rm{ Let $f$ be an admissible Morse function on $W$.  A Riemannian metric ${\mathfrak m}$ on
  $W$ is \emph{compatible} with the Morse function $f$ if for every
  critical point $z\in \Sigma f$ with index $\lambda$, the positive and
  negative eigenspaces $T_zW^+$ and $T_zW^-$ of the Hessian $d^2f$ are
  $\mathfrak m$-orthogonal, and $d^2f|_{T_zW^+}=\mathfrak
  m|_{T_zW^+}$, $d^2f|_{T_zW^-}=-\mathfrak m|_{T_zW^-}$.}
\end{Definition}
 
\begin{Definition}
\rm{
A {\em Morse triple} on a compact cobordism $\{W;X_0,X_1\}$ is a triple $(f,\m,V)$ where $f:W\rightarrow I$ is a Morse function, $\m$ is a compatible metric for $f$, and $V$ is a gradient-like vector field with respect to $f$ and $\m$. In the case when $f$ is an admissible Morse function, the triple $(f,\m,V)$ is called an {\em admissible Morse triple}. 
}
\end{Definition}

The fact that for a given Morse function $f$, the space of compatible metrics is convex, means that the space of Morse triples on $(f,\m,V)$ is homotopy equivalent to the space of Morse functions $\Mor(W)$. Similarly the space of admissible Morse triples $(f,\m, V)$ is homotopy equivalent to the space of admissible Morse functions $\Mor^{adm}(W)$. 

\begin{Remark}
 In our use of Morse triples, we will regularly use the abbreviation $f=(f,\m,V)$. This is justified by the fact in every case, the important data comes from the choice of $f$, whereas the choice of compatible metric and gradient-like vector field is arbitrary.
\end{Remark}

\subsection{A Review of the Gromov-Lawson Cobordism Theorem}

Let $(W;X_0,X_1)$ be as before and let $g_0$ be a psc-metric on $X_0$. In Part One we discussed the problem of extending the metric $g_0$ to a psc-metric $\bar{g}$ on $W$, which has a product structure near $\p W$. In particular, we proved the following theorem.

\begin{Theorem}\label{GLcob} 
Let $\{W^{n+1};X_0,X_1\}$ be a smooth compact cobordism. Suppose $g_0$ is a metric of positive scalar curvature on $X_0$ and $f=(f,\m,V)$ is an admissible Morse triple on $W$. Then there is a psc-metric $\bar{g}=\bar{g}(g_0,f)$ on $W$ which extends $g_0$ and has a product structure near the boundary.
\end{Theorem}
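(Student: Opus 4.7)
The plan is to build $\bar g$ inductively by traversing $W$ in the direction of increasing $f$, handling the regular regions and the critical points separately. Using the gradient-like vector field $V$, I would first arrange the critical values $c_1<c_2<\cdots<c_k$ of $f$ so that each level contains at most one critical point, and choose small $\epsilon>0$ so that the intervals $[c_j-\epsilon,c_j+\epsilon]$ are disjoint and lie in $(0,1)$. Flowing along $V$ identifies each ``regular" piece $f^{-1}[c_j+\epsilon, c_{j+1}-\epsilon]$ diffeomorphically with a cylinder $Y_j\times I$, where $Y_j=f^{-1}(c_j+\epsilon)$, so over such pieces I can simply take a product metric $g_j+dt^2$ once $g_j$ is defined on $Y_j$. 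The initial input is $g_0$ on $X_0=f^{-1}(0)$, extended as $g_0+dt^2$ across the first regular region.

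The real work happens at each critical point $w_j$ of index $\lambda_j$. Using the compatibility of $\m$ with $f$ from Definition \ref{compat_metric}, I can choose Morse coordinates near $w_j$ in which $f$ is the standard quadratic form and $\m$ is the flat Euclidean metric, so that the descending sphere $S^{\lambda_j-1}\subset f^{-1}(c_j-\epsilon)$ has a standard trivialised normal bundle in $Y_{j-1}$. Since $f$ is admissible, $\lambda_j\leq n-2$, so the normal bundle of this sphere has rank $n+1-\lambda_j\geq 3$. This is exactly the codimension hypothesis required for the Gromov--Lawson surgery construction: I can deform $g_{j-1}$ through psc-metrics on $Y_{j-1}$, keeping it unchanged outside a tubular neighbourhood of $S^{\lambda_j-1}$, until it coincides on that tube with a standard product $\delta^{2}ds^{2}_{\lambda_j-1}+\gtor^{n-\lambda_j+1}(\epsilon)$ (a round sphere times a torpedo). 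At this point the metric on a neighbourhood of $S^{\lambda_j-1}$ extends explicitly across the critical level as a ``handle metric" modelled on the mixed-torpedo picture $g_{Mtor}^{\lambda_j-1,n-\lambda_j}$ traced out along the trace of the surgery, which is the standard model for the elementary cobordism between $Y_{j-1}$ and $Y_j$. Outside the handle region the metric continues as a cylinder, giving a psc-metric on $f^{-1}[c_j-\epsilon,c_j+\epsilon]$ that agrees with the already-constructed metric below and induces some psc-metric $g_j$ on $Y_j$ which is again a product near the ``caps".

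Having defined the metric on the critical slab and on the adjacent regular slabs, I would glue them together. The only potential mismatch is that the psc-metric obtained from surgery may differ from the original cylinder metric by an isotopy; here one invokes Lemma~1.3 of Part One (which converts isotopies into concordances while preserving product structure near the boundary) to interpolate smoothly across $Y_{j-1}\times I$ without losing positive scalar curvature. Iterating this procedure through all $k$ critical points produces the desired $\bar g$ on all of $W$, with product structure near both $X_0$ and $X_1$. The main obstacle is the Gromov--Lawson surgery step at each $w_j$: constructing the deformation of $g_{j-1}$ to its standard form on the tube, while maintaining positive scalar curvature, requires the careful analysis of a curve in the normal half-plane of $S^{\lambda_j-1}$ whose rotation generates the torpedo tip, and this is precisely where the codimension-three condition $n+1-\lambda_j\geq 3$ is used to absorb the negative curvature contributions from bending. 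The remaining steps, namely the cylindrical extensions and the isotopy-to-concordance step, are routine given the machinery of torpedo and mixed-torpedo metrics recalled above.
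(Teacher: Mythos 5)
Your proposal follows essentially the same route as the paper: use the gradient-like flow of $V$ to install product metrics on the regular slabs, and at each critical point invoke the compatibility of $\m$ with $f$ to obtain Morse coordinates, perform the Gromov--Lawson deformation on the level set below the critical value to standardise the metric near the descending sphere (where admissibility $\lambda_j\leq n-2$ supplies the codimension-$\geq 3$ hypothesis), convert this isotopy into a concordance via Lemma~1.3 of Part One, extend across the handle with a product of torpedo metrics, and iterate. This is precisely the structure the paper reviews (cylinders on $W\setminus U$, transition to $\gtor^{p+1}(\epsilon)+g_{tor}^{q+1}(\delta)$ on $U_w$, and product continuation on $U_1$), so there is nothing materially different to flag.
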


We call the metric $\bar{g}$, a {\em Gromov-Lawson cobordism} with respect to $g_0$ and $f$. It is worth briefly reviewing the structure of this metric. We begin with some topological observations about the admissible Morse function $f$ in the statement of the theorem. For simplicity, let us assume for now that $f$ has only a single critical point $w$ of index $p+1$. Intersecting transversely at $w$ are a pair of trajectory disks $K_{-}^{p+1}$ and $K_{+}^{q+1}$; see Fig. \ref{trajflow}. The lower disk $K_{-}^{p+1}$ is a $(p+1)$-dimensional disk which is bounded by an embedded $p$-sphere $S_{-}^{p}\subset X_0$. It consists of the union of segments of integral curves of the gradient-like vector field, beginning at the bounding sphere and ending at $w$. Similarly, $K_{+}^{q+1}$ is a $(q+1)$-dimensional disk bounded by an embedded $q$-sphere $S_{+}^{q}\subset X_1$. The bounding spheres $S_{-}^{p}$ and $S_{+}^{q}$ are known as trajectory spheres.

Let $N$ denote a small tubular neighbourhood of $S_{-}^{p}$, defined with respect to the metric $m|_{X_0}$. Consider the region $X_0\setminus N$. For each point $x\in X_0\setminus N$, there is a unique maximal integral curve of the vector field $V$, $\psi_x:[0,1]\rightarrow W$ satisfying $f\circ\psi_x(t)=t$; see section 3 of \cite{Smale} for details. This gives rise to an embedding
\begin{equation*}
\begin{split}
\psi:(X_0\setminus N)\times I&\longrightarrow W\\
(x,t)&\longmapsto (\psi_x(t)).
\end{split}
\end{equation*}
We denote by $U$, the complement of this embedding in $W$. Notice that $U$ is a sort of ``cross-shaped" region and a neighbourhood of $K_{-}^{p+1}\cup K_{+}^{q+1}$; see Fig. \ref{trajflow}. Indeed, a continuous shrinking of the radius of $N$ down to $0$ induces a deformation retract of $U$ onto $K_{-}^{p+1}\cup K_{+}^{q+1}$.

\begin{figure}[htbp]
\vspace{-2cm}
\begin{picture}(0,0)%
\includegraphics{Pictures/cobtrajtwo.eps}%
\end{picture}%
\setlength{\unitlength}{3947sp}%
\begingroup\makeatletter\ifx\SetFigFont\undefined%
\gdef\SetFigFont#1#2#3#4#5{%
  \reset@font\fontsize{#1}{#2pt}%
  \fontfamily{#3}\fontseries{#4}\fontshape{#5}%
  \selectfont}%
\fi\endgroup%
\begin{picture}(6381,2877)(561,-3478)
\put(1347,-2380){\makebox(0,0)[lb]{\smash{{\SetFigFont{10}{12}{\rmdefault}{\mddefault}{\updefault}{\color[rgb]{0,0,0}$w$}%
}}}}
\put(785,-2480){\makebox(0,0)[lb]{\smash{{\SetFigFont{10}{12}{\rmdefault}{\mddefault}{\updefault}{\color[rgb]{0,0,0}$K_{+}^{q+1}$}%
}}}}
\put(1575,-2768){\makebox(0,0)[lb]{\smash{{\SetFigFont{10}{12}{\rmdefault}{\mddefault}{\updefault}{\color[rgb]{0,0,0}$K_{-}^{p+1}$}%
}}}}
\put(1370,-3350){\makebox(0,0)[lb]{\smash{{\SetFigFont{10}{12}{\rmdefault}{\mddefault}{\updefault}{\color[rgb]{0,0,0}$S_{-}^{p}$}%
}}}}
\put(1342,-1861){\makebox(0,0)[lb]{\smash{{\SetFigFont{10}{12}{\rmdefault}{\mddefault}{\updefault}{\color[rgb]{0,0,0}$S_{+}^{q}$}%
}}}}
\put(4339,-3400){\makebox(0,0)[lb]{\smash{{\SetFigFont{10}{12}{\rmdefault}{\mddefault}{\updefault}{\color[rgb]{0,0,0}$N$}%
}}}}
\put(4000,-2580){\makebox(0,0)[lb]{\smash{{\SetFigFont{10}{12}{\rmdefault}{\mddefault}{\updefault}{\color[rgb]{0,0,0}$U$}%
}}}}
\put(376,-2661){\makebox(0,0)[lb]{\smash{{\SetFigFont{10}{12}{\rmdefault}{\mddefault}{\updefault}{\color[rgb]{0,0,0}$W$}%
}}}}
\put(314,-1961){\makebox(0,0)[lb]{\smash{{\SetFigFont{10}{12}{\rmdefault}{\mddefault}{\updefault}{\color[rgb]{0,0,0}$X_1$}%
}}}}
\put(314,-3274){\makebox(0,0)[lb]{\smash{{\SetFigFont{10}{12}{\rmdefault}{\mddefault}{\updefault}{\color[rgb]{0,0,0}$X_0$}%
}}}}
\end{picture}%
\caption{Trajectory disks of the critical point $w$ contained inside a disk $U$}
\label{trajflow}
\end{figure}

We now define the metric $\bar{g}$ on the region $W\setminus U$ to be simply $g_{0}|_{X\setminus N}+dt^{2}$ where the $t$ coordinate comes from the embedding $\psi$ above. Of course, the real challenge lies in extending this metric over the region $U$. Notice that the boundary of $U$ decomposes as
\begin{equation*}
\p U = (S^{p}\times D^{q+1})\cup (S^{p}\times S^{q}\times I)\cup (D^{p+1}\times S^{q}).
\end{equation*}
The $S^{p}\times D^{q+1}$ part of this decomposition is of course the tubular neighbourhood $N$, while the $D^{p+1}\times S^{q}$ piece is a tubular neighbourhood of the outward trajectory sphere $S_{+}^{q}\subset X_1$. Without loss of generality, assume that $f(w)=\frac{1}{2}$. Let $c_0$ and $c_1$ be constants satisfying $0<c_0<\frac{1}{2}<c_{1}<1$.
The level sets $f=c_0$ and $f=c_1$ divide
$U$ into three regions: 
$$
\begin{array}{rcl}
U_0& = & f^{-1}([0,c_1])\cap U,
\\
\\
U_{w}& = & f^{-1}([c_0, c_1])\cap U,  
\\
\\
U_1 &= & f^{-1}([c_1, 1])\cap U.
\end{array} 
$$

The region $U_0$ can be diffeomorphically identified with $N\times [0,c_0]$ in exactly the way we identified $W\setminus U$ with $X_0\setminus N\times I$. Thus, on $U_0$, we define $\bar{g}$ as simply the product $g_{0}|_{N}+dt^{2}$. Indeed, we can extend this metric $g_{0}|_{N}+dt^{2}$ near the $S^{p}\times S^{q}\times I$ part of the boundary also where, again, $t$ is the trajectory coordinate. Inside $U_w$, which is identified with a cross-shaped region inside the disk product $D^{p+1}\times D^{q+1}$, the metric smoothly transitions to a standard product $\gtor^{p+1}(\epsilon)+g_{tor}^{q+1}(\delta)$ for some appropriately chosen $\epsilon,\delta>0$. This is done so that the induced metric on the level set $f^{-1}(c_1)$, denoted $g_1$, is precisely the metric obtained by application of the Gromov-Lawson construction on $g_0$. Furthermore, near $f^{-1}(c_1)$, $\bar{g}=g_1+dt^{2}$. Finally, on $U_{1}$, which is identified with $D^{p+1}\times S^{q}\times [c_1,1]$ in the usual manner, the metric $\bar{g}$ is simply the product $g_1+dt^{2}$. See Fig. \ref{morsecoordblock} for an illustration.
 
\begin{figure}[htbp]
\begin{picture}(0,0)%
\includegraphics[scale=0.9]{Pictures/morsecoordblockthree.eps}%
\end{picture}%
\setlength{\unitlength}{3947sp}%
\begingroup\makeatletter\ifx\SetFigFont\undefined%
\gdef\SetFigFont#1#2#3#4#5{%
  \reset@font\fontsize{#1}{#2pt}%
  \fontfamily{#3}\fontseries{#4}\fontshape{#5}%
  \selectfont}%
\fi\endgroup%
\begin{picture}(7349,3720)(995,-3850)
\put(4019,-2234){\makebox(0,0)[lb]{\smash{{\SetFigFont{10}{14.4}{\rmdefault}{\mddefault}{\updefault}{\color[rgb]{0,0,0}$standard$}%
}}}}
\put(2532,-1096){\makebox(0,0)[lb]{\smash{{\SetFigFont{10}{14.4}{\rmdefault}{\mddefault}{\updefault}{\color[rgb]{0,0,0}$t$}%
}}}}
\put(5094,-3784){\makebox(0,0)[lb]{\smash{{\SetFigFont{10}{14.4}{\rmdefault}{\mddefault}{\updefault}{\color[rgb]{0,0,0}$g_1+dt^{2}$}%
}}}}
\put(5094,-546){\makebox(0,0)[lb]{\smash{{\SetFigFont{10}{12}{\rmdefault}{\mddefault}{\updefault}{\color[rgb]{0,0,0}$g_1+dt^{2}$}%
}}}}
\put(2414,-1800){\makebox(0,0)[lb]{\smash{{\SetFigFont{10}{12}{\rmdefault}{\mddefault}{\updefault}{\color[rgb]{0,0,0}$transition$}%
}}}}
\put(2414,-2604){\makebox(0,0)[lb]{\smash{{\SetFigFont{10}{12}{\rmdefault}{\mddefault}{\updefault}{\color[rgb]{0,0,0}$transition$}%
}}}}
\put(5514,-1800){\makebox(0,0)[lb]{\smash{{\SetFigFont{10}{12}{\rmdefault}{\mddefault}{\updefault}{\color[rgb]{0,0,0}$transition$}%
}}}}
\put(5514,-2604){\makebox(0,0)[lb]{\smash{{\SetFigFont{10}{12}{\rmdefault}{\mddefault}{\updefault}{\color[rgb]{0,0,0}$transition$}%
}}}}
\put(1089,-2124){\makebox(0,0)[lb]{\smash{{\SetFigFont{10}{12}{\rmdefault}{\mddefault}{\updefault}{\color[rgb]{0,0,0}$g_0+dt^{2}$}%
}}}}
\put(6889,-2124){\makebox(0,0)[lb]{\smash{{\SetFigFont{10}{12}{\rmdefault}{\mddefault}{\updefault}{\color[rgb]{0,0,0}$g_0+dt^{2}$}%
}}}}
\put(5051,-3224){\makebox(0,0)[lb]{\smash{{\SetFigFont{10}{12}{\rmdefault}{\mddefault}{\updefault}{\color[rgb]{0,0,0}$f=c_1$}%
}}}}
\put(5051,-1049){\makebox(0,0)[lb]{\smash{{\SetFigFont{10}{12}{\rmdefault}{\mddefault}{\updefault}{\color[rgb]{0,0,0}$f=c_1$}%
}}}}
\put(1801,-2961){\makebox(0,0)[lb]{\smash{{\SetFigFont{10}{12}{\rmdefault}{\mddefault}{\updefault}{\color[rgb]{0,0,0}$f=c_0$}%
}}}}
\put(6576,-2949){\makebox(0,0)[lb]{\smash{{\SetFigFont{10}{12}{\rmdefault}{\mddefault}{\updefault}{\color[rgb]{0,0,0}$f=c_0$}%
}}}}
\end{picture}%
\caption{The metric $\bar{g}$ on the disk $U$}
\label{morsecoordblock}
\end{figure}
 
We should point out that this construction can be carried out for a tubular neighbourhood $N$ of arbitrarily small radius and for $c_0$ and $c_1$ chosen arbitrarily close to $\frac{1}{2}$. Thus, the region $U_w$, on which the metric $\bar{g}$ is not simply a product and is undergoing some kind of transition, can be made arbitrarily small with respect to the background metric $\m$. As critical points of a Morse function are isolated, it follows that this construction generalises easily to Morse functions with more than one critical point.

\subsection{Equivariance of the Gromov-Lawson Construction}

We now make an important observation, with regard to the above construction. We mentioned earlier that the region $U_w$ is identified with a cross-shaped region inside the disk product $D^{p+1}\times D^{q+1}$. There is of course an obvious action of the group $O(p+1)\times O(q+1)$ on $D^{p+1}\times D^{q+1}$. The fact that the above construction is equivariant with respect to this action is the subject of the following lemma. This fact will be important later on, when we generalise this construction over bundles of fibrewise admissible Morse functions.

\begin{Lemma}\label{GLequiv}
The construction of a Gromov-Lawson cobordism is equivariant with respect to the action of $O(p+1)\times O(q+1)$.
\end{Lemma}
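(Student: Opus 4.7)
The plan is to verify equivariance separately on each of the four pieces into which the construction decomposes the cobordism, namely $W \setminus U$, $U_0$, $U_w$, and $U_1$. The $O(p+1) \times O(q+1)$-action is supported in the chart identifying the cross-shaped region $U$ with its standard model inside $D^{p+1} \times D^{q+1}$, and extends naturally to the tubular neighbourhoods $N \cong S^p \times D^{q+1} \subset X_0$ and $D^{p+1} \times S^q \subset X_1$ along which the output metric is already in a standard form.

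On $W \setminus U$ the metric $\bar g = g_0|_{X_0 \setminus N} + dt^2$ sits outside the support of the action, so equivariance is automatic. On $U_0$ and $U_1$ the metric is a product of a slice metric and $dt^2$, where the $t$-coordinate is supplied by the gradient-like flow of $V$. The compatibility condition of Definition \ref{compat_metric} forces $V$ to coincide with $\grad_{\m} f$ for the model quadratic near the critical point, and this model gradient is manifestly $O(p+1) \times O(q+1)$-invariant; hence the flow and its level sets are preserved by the action. Equivariance on $U_1$ then reduces to equivariance of the slice metric, which is $\gtor^{p+1}(\epsilon) + \delta^2 ds_q^2$ and therefore invariant; equivariance on $U_0$ is tautological, in that the construction pulls back $g_0|_N + dt^2$ with no further modification, so it respects any symmetry of $g_0|_N$ under the action.

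The only substantive piece is $U_w$, where the construction transitions smoothly from $g_0|_N + dt^2$ along the boundary $f = c_0$ to the standard product $\gtor^{p+1}(\epsilon) + g_{tor}^{q+1}(\delta)$ in the interior. As reviewed above, the interpolating metrics from Part One are doubly warped products whose warping coefficients depend only on the radial coordinates $r_1, r_2$ of the two disk factors and on the trajectory coordinate $t$. Since the $O(p+1) \times O(q+1)$-action preserves each of $r_1$, $r_2$, $t$, and the round metrics $ds_p^2$ and $ds_q^2$, any metric of this form is invariant, and the entire transition is equivariant by inspection.

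The main obstacle is not geometric but a matter of careful bookkeeping: one must confirm that the gradient-like vector field and every cut-off, bump, and warping function used in the transition can be taken independent of the angular coordinates on $S^p$ and $S^q$. This is exactly how the construction is set up in Part One, so the lemma requires no new estimates and amounts to observing that each ingredient in the recipe was $O(p+1) \times O(q+1)$-invariant to begin with.
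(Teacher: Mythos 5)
Your reduction to the transition region $U_w$ is the right skeleton, but the argument there contains a conceptual error that would make the proof fail for a generic input metric. You assert that the interpolating metrics in $U_w$ are doubly warped products whose coefficients depend only on the two radial coordinates and on $t$, and conclude that they are $O(p+1)\times O(q+1)$-\emph{invariant}, from which equivariance would be automatic. This is not true. The transition begins from $g_0|_N + dt^2$, where $g_0$ is an arbitrary psc-metric with no angular symmetry, and the intermediate stages of the Gromov-Lawson isotopy still carry nontrivial angular dependence inherited from $g_0$ — for instance the term $g|_{S^q(f_\delta(r))}$ appearing in the linear homotopy that takes the bent metric to a Riemannian submersion. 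These intermediate metrics are emphatically not doubly warped products and are not invariant; invariance would only hold if $g_0$ were already standard.

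What the lemma actually asserts (and what must be proved) is the weaker statement that the construction is \emph{equivariant as a map}: if you first push forward the input metric by an element $\psi\in O(p+1)\times O(q+1)$, then run the Gromov-Lawson isotopy, and then pull back by $\psi$, you recover the original one-parameter family. This is a commutation-with-the-action property of the formulas, not a symmetry property of the metrics. Establishing it requires going through each stage of the isotopy — the bending argument (where one checks that the push-out is purely radial in the fibre and that a single push-out curve serves for every point of $S^p$), the linear homotopies on the fibre and on the base (where one verifies by a short computation that the cut-off structure and the $O(q+1)$-symmetry of $ds_q^2$ let $\psi^{-1}\circ(\cdot)\circ\psi$ cancel), and the final flattening of the horizontal distribution (where one checks that $\psi_*^{-1}H_y^{\mathrm{flat}}=H_x^{\mathrm{flat}}$). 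Your closing remark that "each ingredient was invariant to begin with" collapses this distinction and passes over the horizontal-distribution step entirely. As written, the proposal proves a statement that is only true for rotationally symmetric $g_0$; the lemma needs to hold for arbitrary $g_0$, and that needs the stage-by-stage equivariance check above.
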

\begin{proof}

The original Gromov-Lawson construction shows that if $g$ is a psc-metric on a manifold $X$ and $S^{p}\times D^{q+1}\subset X$ is an embedding with $q\geq 2$, then $g$ can be replaced by a psc-metric $g'$ which is standard near the embedded sphere $\Sp$ and the original metric $g$ away from $\Sp$. By standard, we mean that near the embedded sphere $\Sp$, the resulting metric takes the form $ds_p+g_{tor}^{q+1}(\delta)$ for some small $\delta>0$. In Part One, we describe this construction in detail. We furthermore show that the metrics $g$ and $g'$ are isotopic through an isotopy $g_s, s\in I$, which fixes the metric away from $\Sp$; see Theorem 3.2  in Part One. It is this isotopy that is used to create a psc-metric on a region diffeomorphic to $\Sp \times \Dq \times I$ which is the orginal metric $g+dt^2$ near $\Sp \times \Dq \times \{0\}\cup \Sp \times \p\Dq \times I$ and which is $\epsilon^2 ds_p+g_{tor}^{q+1}(\delta)+dt^2$ near $\Sp \times \{0\} \times \{1\}$. This is done by appropriately rescaling the isotopy $g_s, s\in I$, to obtain a warped product metric $g_t+dt^2$ with the required properties. 

To prove the lemma, we must show that each metric $g_s$ in this isotopy has been constructed $O(p+1)\times O(q+1)$-equivariantly. In otherwords, the metric $g_s$ is the same if we first act on $(\Sp \times \Dq \times \{0\},g)$ by $O(p+1)\times O(q+1)$, next perform the isotopy along $[0,s]$ and finally undo the orginal action. To see that this is the case we must review the construction of the isotopy $g_s, s \in I$. This construction consists of a number of steps.

The first stage in the isotopy is the so-called ``bending argument''. The metric $g$ is altered on fibres $\Dq$ but not in the $\Sp$ directions. The alteration involves smoothly pushing out geodesic spheres on the fibres $\Dq$ to form a hypersurface in $X\times \mathbb{R}$ and then replacing $g$ with the metric induced on this hypersurface. On each point of $\Sp$, the fibre $\Dq$ is altered by pushing out each geodesic sphere $\Sq(r)$, a fixed distance determined by its radius $r$. As this adjustment takes place on fibres only in the radial direction, equivariance on the $O(q+1)$ factor is guaranteed. Furthermore, the ``pushing out'' process is the same for every point of $\Sp$, i.e. a single ``push-out" curve is chosen to determine this process on all fibres. This guarantees $O(p+1)$-equivariance. 
By homotoping through appropriate ``push-out" curves we construct the first part of the isotopy.

We now come to the second stage of the construction. The resulting metric induced on the fibres is, near $á¹¢p$, close to the torpedo metric $\gtor^{q+1}(\delta)$. This follows from the fact that the push-out cuve ends as the graph of the torpedo function $f_\delta$ defined earlier. Provided $\delta$ is small enough, a straightforward linear homotopy on the fibres results in a metric which, near $S^p$, is a Riemannain submersion with base metric $g|_{\Sp}$ and fibre metric $\gtor^{q+1}(\delta)$. The fact that this process is identical on each fibre guarantees $O(p+1)$-equivariance. We now concentrate on the fibre $\Dq$. At this stage, the metric on the fibre takes the form $dr^2+g|_{\Sq(f_\delta(r))}$. We wish to perform a linear homotopy of this metric, near the centre of the disk, to one which is the standard metric $\gtor^{q+1}(\delta)$ while fixing the original metric away from the centre of the disk. This is possible because the $O(q+1)$-symmetry of the metric $\gtor^{q+1}(\delta)$ guarantees that this process only varies in the radial direction.

The required isotopy on the fibre takes the form
\begin{equation}\label{orig}
 t[dr^2+(1-\tau(r))f_\delta(r)^2 ds_q^2+\tau(r)g|_{\Sq(f_\delta(r))}]+(1-t)[dr^2+g|_{\Sq(f_\delta(r))}],
\end{equation}
\noindent where $t\in I$, $\tau:[0,\infty)\rightarrow[0,1]$ is an appropriately chosen smooth cut-off function and $r$ is the radial distance from the centre of the disc. Now suppose $\psi$ is an element of $O(q+1)$. Applying $\psi$ to the original metric $dr^2+g|_{\Sq(f_\delta(r))}$ and then applying the isotopy gives us the following expression
\begin{equation*}
 t[dr^2+(1-\tau(r))f_\delta(r)^2 ds_q^2+\tau(r)\psi.(g|_{\Sq(f_\delta(r))})]+(1-t)[dr^2+\psi.g|_{\Sq(f_\delta(r))}].
\end{equation*}
\noindent Finally, applying $\psi^{-1}$, we obtain
\begin{equation*}
 t[dr^2+(1-\tau(r))f_\delta(r)^2 \psi^{-1}.ds_q^2+\tau(r)\psi^{-1}.\psi.(g|_{\Sq(f_\delta(r))})]+(1-t)[dr^2+\psi^{-1}.\psi.g|_{\Sq(f_\delta(r))}],
\end{equation*}
\noindent which is precisely expression \ref{orig}, as the metric $ds_q^2$ is $O(q+1)$-symmetric.

At this point, the original metric has been isotoped to one which, near the embedded surgery sphere $\Sp$, takes the form of a Riemannian submersion with base metric $g|_{\Sp}$ and fibre metric $\gtor^{q+1}(\delta)$. There are two remaining tasks. The first involves a linear homotopy of the base metric to the standard round sphere metric $\epsilon^2 ds_p^2$. Again, the symmetry of the round sphere metric means that an almost identical argument to the one above proves the required equivariance. The final task is to isotopy this metric to a standard product metric. This is achieved by a linear homotopy of the horizontal distribution near the embedded $\Sp$ to one which is flat, with appropriate an smoothing off to fix the original distribution away from $\Sp$. The smoothing part is not a problem, as it happens in the radial direction and the same cut-off function is used on every fibre. The linear homotopy itself is less obvious and needs to be analysed.

We will denote by $\H$ and $\H^{flat}$, the respective distributions. The distribution $\H$ associates to every point $x$ of $\Sp \times \Dq$, a subspace $H_x\subset T_x(\Sp \times \Dq)$. In the case of $\H^{flat}$, the corresponding subspace $H_x^{flat}$, is precisely $T_x\Sp\times\{0\}\subset T_x(\Sp \times \Dq)$. Now suppose $\psi\in O(p+1)\times O(q+1)$ and that $\psi(x)=y$ for some $(x,y)\in \Sp \times \Dq$. We need to show that

\begin{equation*}
(1-t)H_x + tH_x^{flat}=\psi_{*}^{-1}[(1-t)\psi_{*}H_x+tH_y^{flat}].
\end{equation*}

\noindent Simplifying the expression on the right yields
\begin{equation*}
\begin{split}
(1-t)H_x+t\psi_{*}^{-1}H_y^{flat}&=(1-t)H_x+t\psi_{*}^{-1}(T_y\Sp\times\{0\})\\
&=(1-t)H_x + tH_x^{flat},
\end{split}
\end{equation*}
\noindent as $\psi_{*}^{-1}$ maps elements of $T_y\Sp\times\{0\}$ into $T_x\Sp\times\{0\}$. This completes the proof.
\end{proof}

\subsection{Continuous Families of Gromov-Lawson Cobordisms}\label{ctsmorsefam} 
\label{morsefamily}
A careful analysis of the Gromov-Lawson construction shows that it can be applied continuously over a compact family of metrics as well as a compact family of embedded surgery spheres; see Theorem 3.10 in Part One. It then follows that the construction of Theorem \ref{GLcob} can be applied continuously over certain compact families of admissible Morse functions to obtain Theorem \ref{GLcobordismcompact}. Before stating it, we introduce some notation. Let $\mathcal{B}=\{g_b\in\Riem^{+}(X_0):b\in B\}$ be a compact continuous family of psc-metrics on $X_0$, parametrised by a compact space $B$. Let $\mathcal{C}=\{f_c\in{\Mor}^{adm}(W):c\in D^{k}\}$ be a smooth compact family of admissible Morse functions on $W$, parametrised by the disk $D^{k}$.

\begin{Theorem}{\rm [Theorem 0.5 in Part One]}\label{GLcobordismcompact}
There is a continuous map 
\begin{equation*}
\begin{split}
\mathcal{B}\times \mathcal{C}&\longrightarrow \Riem^{+}(W)\\
(g_b,f_c)&\longmapsto \bar{g}_{b,c}=\bar{g}(g_b, f_c)
\end{split}
\end{equation*}
\noindent so that for each pair $(b,c)$, the metric $\bar{g}_{b,c}$ is a Gromov-Lawson cobordism.
\end{Theorem}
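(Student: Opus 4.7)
The plan is to run the single-parameter construction of Theorem \ref{GLcob} pointwise in $(b,c) \in \mathcal{B} \times \mathcal{C}$, verifying that every choice made in that construction can be arranged to depend continuously on $(b,c)$. The backbone is the family version of the basic Gromov-Lawson surgery (Theorem 3.10 of \cite{Walsh1}), which already guarantees that the replacement of a psc-metric by a standard one near an embedded surgery sphere is continuous in both the ambient metric and the embedded sphere. Given this, the task reduces to assembling the cobordism metric out of these surgery modifications in a parameterised way.

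First I would handle the topological data. Since $\mathcal{C}$ is parametrised by the contractible space $D^k$ and admissibility is open in $\Mor(W)$, the critical set bundle $\bigsqcup_{c\in D^k}\Sigma f_c$ trivialises: the critical points of $f_c$ organise into a finite collection of smooth sections $w_{i}:D^k\to W$, each of constant Morse index $\leq n-2$. Choose a smooth family of compatible metrics $\m_c$ and gradient-like vector fields $V_c$, which is possible because the space of compatible metrics for a fixed $f_c$ is convex and thus the bundle of such pairs over $\mathcal{C}$ admits a section. The trajectory disks $K_{\pm,c}$, their bounding spheres $S_{-,c}^p\subset X_0$ and $S_{+,c}^q\subset X_1$, and the cross-shaped regions $U_c$ then vary continuously in $c$. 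By compactness of $\mathcal{C}$, we may fix a single tubular neighbourhood radius for $S_{-,c}^p$ and constants $0<c_0<\tfrac{1}{2}<c_1<1$ that work uniformly for all $c\in D^k$.

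Second, I would carry out the parameterised Gromov-Lawson modification. Away from $U_c$, define $\bar{g}_{b,c}:=g_b+dt^{2}$ using the integral curves of $V_c$, which is manifestly continuous in $(b,c)$. Inside the transition region $U_{w,c}$, apply Theorem 3.10 of \cite{Walsh1} fibrewise to obtain a psc-metric that agrees with $g_b+dt^{2}$ on the $S^{p}\times S^{q}\times I$ part of $\p U_c$ and interpolates to the standard doubly-warped product $\gtor^{p+1}(\epsilon)+g_{tor}^{q+1}(\delta)$ near the core, the transition being governed by a single fixed family of cut-off functions. This produces a psc-metric $g_{1,b,c}$ on the level set $f_c^{-1}(c_1)\cap U_c$ which depends continuously on $(b,c)$, and the metric is then extended over $U_{1,c}$ as the product $g_{1,b,c}+dt^{2}$. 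Putting the pieces together yields the desired $\bar{g}_{b,c}$.

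The main obstacle is uniform control inside the transition region: one must know that the isotopies from the single-parameter proof (the ``bending'' step, the linear homotopy to a Riemannian submersion, and the flattening of the horizontal distribution) can all be performed simultaneously for every $(b,c)$ while preserving positive scalar curvature. This is exactly where Lemma \ref{GLequiv} enters — the $O(p+1)\times O(q+1)$-equivariance ensures that the construction is independent of the choice of framing of the normal bundles to $S_{-,c}^p$ as these vary with $c$, so that the local modifications assemble into a globally defined continuous family. Combining this with the compactness of $\mathcal{B}\times\mathcal{C}$ (which provides uniform positive lower bounds on scalar curvature at each stage of the isotopies) and with the openness of $\Riem^{+}$ in $\Riem$, the continuity of the assignment $(g_b,f_c)\mapsto\bar{g}_{b,c}$ follows from the continuity of each ingredient.
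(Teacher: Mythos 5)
Your proposal is essentially correct and matches the route the paper points to: exploit the contractibility of the parameter disk $D^k$ to trivialise the critical locus and make a single coherent choice of Morse coordinates, compatible metrics and gradient-like vector fields; then feed the resulting continuously varying surgery data into the family version of the Gromov-Lawson surgery replacement (Theorem 3.10 of \cite{Walsh1}) to build $\bar{g}_{b,c}$ region by region. The two facts the paper singles out as carrying the proof — that all $f_c$ have identical critical point data, and that contractibility of $\mathcal{C}$ permits a \emph{global} choice of Morse coordinates varying continuously with $c$ — are both present and correctly used in your argument.

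The one point I would tidy up is your closing appeal to Lemma \ref{GLequiv}. The $O(p+1)\times O(q+1)$-equivariance is the tool designed for the situation where a global choice of Morse coordinates (equivalently, a global framing of $\Ver^{\pm}$ along a fold) is \emph{not} available, i.e. the twisted-family setting of Theorem \ref{main}. In the contractible-base case of Theorem \ref{GLcobordismcompact}, you have already used contractibility of $D^k$ to produce explicit sections $w_i: D^k \to W$ and a coherent family of Morse coordinates around them, so there is no framing ambiguity left for the equivariance to resolve; the local modifications glue because the coordinates themselves vary continuously, not because the construction is insensitive to coordinate changes. Invoking Lemma \ref{GLequiv} here is not incorrect, but it is redundant, and reading the proof this way slightly obscures the actual division of labour in the paper: contractibility handles Theorem \ref{GLcobordismcompact}, while equivariance is reserved precisely for the case when contractibility fails.
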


The proof of Theorem \ref{GLcobordismcompact} relies on two important facts. Firstly, each Morse function in the family $\mathcal{C}$ has the same number of critical points of the same index. Secondly, the fact that the family of Morse functions is parametrised by a contractible space means that as an individual critical point varies over the family, a single choice of Morse coordinates may be chosen to vary with it. In other words, a global choice of Morse coordinates is possible. 

The next stage is to consider ``twisted" families of Morse functions, which are necessarily parameterised by a non-contractible space. This is done in \cite{BHSW}, although, as our goal is a generalisation of this result, we provide a summary below. We adopt the notion of a {\em family of Morse functions}, discussed in the introduction. As before, let $W^{n+1}$ be a smooth compact manifold with $\p W=X_0\sqcup X_1$, a disjoint union of smooth closed $n$-manifolds. We denote by $\Diff(W;X_0,X_1)$, the group of diffeomorphisms of $W$ whose restriction to $\p W$ maps each $X_i$ diffeomorphically to $X_i$, for $i=0,1$. Let $E^{n+k+1}$ and $B^{k}$ be a pair of smooth compact manifolds of dimension $n+1+k$ and $k$ respectively. The manifolds $E$ and $B$ form part of a smooth fibre bundle with fibre $W$, arising from a submersion $\pi:E\rightarrow B$. The structure group of this bundle is assumed to be $\Diff(W;X_0,X_1)$. We will assume also that the boundary of $E$, $\p E$, consists of a pair of disjoint smooth submanifolds $E_0$ and $E_1$. The restriction of $\pi$ to these submanifolds is denoted $\pi_0$ and $\pi_1$ respectively. These maps are also submersions onto $B$ and give rise to a pair of smooth subbundles with respective fibres $X_0, X_1\subset W$ and respective structure groups $\Diff(X_0)$ and $\Diff(X_1)$. All of this gives rise to the commmutative diagram represented in Fig. \ref{subbundles}. 

\begin{figure}[htbp]
\hspace{30mm}
\begin{picture}(0,0)%
\includegraphics{Pictures/admwrinkleearly.eps}%
\end{picture}%
\setlength{\unitlength}{3947sp}%
\begingroup\makeatletter\ifx\SetFigFont\undefined%
\gdef\SetFigFont#1#2#3#4#5{%
  \reset@font\fontsize{#1}{#2pt}%
  \fontfamily{#3}\fontseries{#4}\fontshape{#5}%
  \selectfont}%
\fi\endgroup%
\begin{picture}(2905,2907)(2261,-3802)
\put(3701,-2399){\makebox(0,0)[lb]{\smash{{\SetFigFont{10}{12}{\rmdefault}{\mddefault}{\updefault}{\color[rgb]{0,0,0}$E$}%
}}}}
\put(2276,-2387){\makebox(0,0)[lb]{\smash{{\SetFigFont{10}{12}{\rmdefault}{\mddefault}{\updefault}{\color[rgb]{0,0,0}$W$}%
}}}}
\put(5151,-2399){\makebox(0,0)[lb]{\smash{{\SetFigFont{10}{12}{\rmdefault}{\mddefault}{\updefault}{\color[rgb]{0,0,0}$B$}%
}}}}
\put(2314,-3724){\makebox(0,0)[lb]{\smash{{\SetFigFont{10}{12}{\rmdefault}{\mddefault}{\updefault}{\color[rgb]{0,0,0}$X_0$}%
}}}}
\put(2314,-1049){\makebox(0,0)[lb]{\smash{{\SetFigFont{10}{12}{\rmdefault}{\mddefault}{\updefault}{\color[rgb]{0,0,0}$X_1$}%
}}}}
\put(3701,-1074){\makebox(0,0)[lb]{\smash{{\SetFigFont{10}{12}{\rmdefault}{\mddefault}{\updefault}{\color[rgb]{0,0,0}$E_1$}%
}}}}
\put(3714,-3736){\makebox(0,0)[lb]{\smash{{\SetFigFont{10}{12}{\rmdefault}{\mddefault}{\updefault}{\color[rgb]{0,0,0}$E_0$}%
}}}}
\put(4364,-2286){\makebox(0,0)[lb]{\smash{{\SetFigFont{10}{12}{\rmdefault}{\mddefault}{\updefault}{\color[rgb]{0,0,0}$\pi$}%
}}}}
\put(4601,-1574){\makebox(0,0)[lb]{\smash{{\SetFigFont{10}{12}{\rmdefault}{\mddefault}{\updefault}{\color[rgb]{0,0,0}$\pi_1$}%
}}}}
\put(4639,-3174){\makebox(0,0)[lb]{\smash{{\SetFigFont{10}{12}{\rmdefault}{\mddefault}{\updefault}{\color[rgb]{0,0,0}$\pi_0$}%
}}}}
\end{picture}%
\caption{The smooth fibre bundle $\pi$ and subbundles $\pi_i$ where $i=0,1$.}
\label{subbundles}
\end{figure}

We now equip the bundle $\pi:E\rightarrow B$ with the structure of Riemannian submersion. For each $y\in B$, we denote by $W_y$, the fibre $\pi^{-1}(y)$. The union of tangent bundles $TW_y$, to $W_y$ over $y\in B$, forms a smooth subbundle of $TE$, the tangent bundle to $E$. This subbundle is denoted $\Ver$. Choose a horizontal distribution $\H_E$ for the submersion $\pi$. Now equip the base manifold $B$ with some Riemannian metric $\m_B$ and let $\m_y, y\in B$ be a smooth family of metrics on $W$. From chapter 9 of \cite{B}, we know that this gives rise to a unique submersion metric $\m_E$ on $E$ giving us a Riemannian submersion $\pi:(E, \m_E)\rightarrow (B, \m_B)$. Shortly, we will add some further restrictions on the type of submersion we wish to deal with. Before this we need a way of describing a fibrewise Morse function on $E$.

\begin{Definition}
\label{admissiblemap}
{\rm
A smooth map $F:E\rightarrow B\times I$ is said to be a {\em family of admissible Morse functions} if it satisfies the following conditions.
\begin{enumerate}
\item[{\bf (i)}] For each $w\in E$, $\pi(w)=p_1\circ F(w)$.
\item[{\bf (ii)}] The pre-images $F^{-1}(B\times\{0\})$ and $F^{-1}(B\times\{1\})$ are the submanifolds $E_0$ and $E_1$ respectively.
\item[{\bf (iii)}] The singular set $\Sigma F$ is contained entirely in $E\setminus(E_0\sqcup E_1)$.
\item[{\bf (iv)}] For each $y\in B$, the restriction $f_y=F|_{W_y}$ is an admissible Morse function, i.e. one whose Morse critical points have index $\leq n-2$.
\end{enumerate}
}
\end{Definition}
\noindent This is shown schematically in Fig. \ref{fibrewisemorse}, where we reproduce from the introduction, a family which restricts on fibres to a Morse function with two critical points. 

\begin{figure}[htbp]
\vspace{-2cm}
\begin{picture}(0,0)%
\includegraphics{Pictures/fibrewisemorse.eps}%
\end{picture}%
\setlength{\unitlength}{3947sp}%
\begingroup\makeatletter\ifx\SetFigFont\undefined%
\gdef\SetFigFont#1#2#3#4#5{%
  \reset@font\fontsize{#1}{#2pt}%
  \fontfamily{#3}\fontseries{#4}\fontshape{#5}%
  \selectfont}%
\fi\endgroup%
\begin{picture}(4536,4099)(1496,-5186)
\put(1576,-3249){\makebox(0,0)[lb]{\smash{{\SetFigFont{10}{12}{\rmdefault}{\mddefault}{\updefault}{\color[rgb]{0,0,0}{$W$}}%
}}}}
\put(1276,-2649){\makebox(0,0)[lb]{\smash{{\SetFigFont{10}{12}{\rmdefault}{\mddefault}{\updefault}{\color[rgb]{0,0,0}{$X_1$}}%
}}}}
\put(1276,-3949){\makebox(0,0)[lb]{\smash{{\SetFigFont{10}{12}{\rmdefault}{\mddefault}{\updefault}{\color[rgb]{0,0,0}{$X_0$}}%
}}}}

\put(4676,-3049){\makebox(0,0)[lb]{\smash{{\SetFigFont{10}{12}{\rmdefault}{\mddefault}{\updefault}{\color[rgb]{0,0,0}{$F$}}%
}}}}
\put(6576,-2449){\makebox(0,0)[lb]{\smash{{\SetFigFont{10}{12}{\rmdefault}{\mddefault}{\updefault}{\color[rgb]{0,0,0}{$B\times I$}}%
}}}}

\put(2236,-2649){\makebox(0,0)[lb]{\smash{{\SetFigFont{10}{12}{\rmdefault}{\mddefault}{\updefault}{\color[rgb]{0,0,0}{$W_y$}}%
}}}}

\put(3876,-2449){\makebox(0,0)[lb]{\smash{{\SetFigFont{10}{12}{\rmdefault}{\mddefault}{\updefault}{\color[rgb]{0,0,0}{$E$}}%
}}}}
\put(2751,-4899){\makebox(0,0)[lb]{\smash{{\SetFigFont{10}{12}{\rmdefault}{\mddefault}{\updefault}{\color[rgb]{0,0,0}$y$}%
}}}}
\put(3406,-4486){\makebox(0,0)[lb]{\smash{{\SetFigFont{10}{12}{\rmdefault}{\mddefault}{\updefault}{\color[rgb]{0,0,0}$\pi$}%
}}}}
\put(4156,-5026){\makebox(0,0)[lb]{\smash{{\SetFigFont{10}{12}{\rmdefault}{\mddefault}{\updefault}{\color[rgb]{0,0,0}$B$}%
}}}}
\put(4756,-4486){\makebox(0,0)[lb]{\smash{{\SetFigFont{10}{12}{\rmdefault}{\mddefault}{\updefault}{\color[rgb]{0,0,0}$p_1$}%
}}}}
\end{picture}%
\caption{A family of admissible Morse functions with two folds}
\label{fibrewisemorse}
\end{figure}

The critical set of $F$ is a union of path components, each consisting of Morse singularities of the respective fibrewise restrictions of $F$. Each such path component is known a {\em fold} and the critical points as {\em fold singularities} of $F$. Near any fold singularity, $F$ is equivalent to the map
\begin{equation}\label{s-fold}
\begin{split}
\mathbb{R}^{k}\times\mathbb{R}^{n+1}&\longrightarrow\mathbb{R}^{k}\times\mathbb{R}\\
(y,x)&\longmapsto\left(y, -\sum_{i=1}^{s}{x_i}^{2}+\sum_{i=s+1}^{n-k+1}{x_i}^2\right),
\end{split}
\end{equation}
for some $s\in\{0,1,\ldots,n-2\}$. The index $s$ will be consistent throughout any particular fold of $F$ and so such a fold may be regarded as an {\em $s$-fold}. Regions parametrised by the $\mathbb{R}^{k}$ factor are of course mapped diffeomorphically onto their images in $B$, by $\pi$.

As before, it will be important to have a background metric, in order to define notions such as gradient flow. Generalising our earlier notion of a Riemannian metric which is compatible with a Morse functions $f:W\rightarrow I$, we obtain the following definition.

\begin{Definition}
{\rm
A Riemmanian metric $\m_E$ on the manifold $E$ is said to be {\em compatible} with the admissible map $F$ if the restriction of $\m_E$ to fibres $W_y, y\in B$ is compatible with the function $f_y:W_y\rightarrow I$ on fold singularities. }
\end{Definition}

\begin{Proposition}
  Let $\pi: E\to B$ be a smooth bundle as above and $F: E \to B\times
  I$ be an admissible map with respect to
  $\pi$. Then the bundle $\pi: E\to B$ admits the structure of a
  Riemannian submersion $\pi: (E,{\mathfrak m}_{E})\to (B,{\mathfrak
    m}_{B})$ such that the metric ${\mathfrak m}_{E}$ is compatible
  with the map $F: E \to B\times I$.
\end{Proposition}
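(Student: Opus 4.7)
The plan is to assemble the submersion metric $\m_E$ from three independent pieces of data: an arbitrary Riemannian metric $\m_B$ on $B$; a smoothly varying family $\{\m_y\}_{y\in B}$ of Riemannian metrics on the fibres $W_y$ that are compatible with $f_y=F|_{W_y}$; and a horizontal distribution $\H_E\subset TE$ for the submersion $\pi$. Given all three, the construction from Chapter 9 of \cite{B} recalled in Section \ref{ctsmorsefam} yields a unique Riemannian metric $\m_E$ on $E$ whose restriction to the vertical bundle $\Ver$ realises the family $\{\m_y\}$, whose horizontal complement is $\H_E$, and for which $\pi:(E,\m_E)\to(B,\m_B)$ is a Riemannian submersion. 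The choice of $\m_B$ is unconstrained, and a horizontal distribution $\H_E$ always exists on a smooth fibre bundle (take any Riemannian metric on $E$ and declare $\H_E$ to be the $\m$-orthogonal complement of $\Ver$, or assemble local horizontal distributions via a partition of unity on $B$). So the only substantive work is the production of the compatible fibrewise family $\{\m_y\}$.

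For this, cover the fold set $\Sigma F\subset E$ by finitely many open sets $U_\alpha$ on each of which $F$ admits the normal form (\ref{s-fold}). After shrinking $U_\alpha$ and rescaling the $x$-coordinates by $\sqrt{2}$, we may arrange that in the local coordinates $(y,x)\in\R^{k}\times\R^{n+1}$ the Hessian of $f_y$ at $x=0$ is $\operatorname{diag}(-1,\ldots,-1,+1,\ldots,+1)$ with respect to the Euclidean form $\sum_{i=1}^{n+1}dx_i^2$. Let $\m_y^{\alpha}$ denote the restriction of this Euclidean form to the fibre slices of $U_\alpha$; by construction the eigenspaces $T_zW^\pm$ at the fold point are $\m_y^\alpha$-orthogonal and $d^2 f_y|_{T_zW^\pm}=\pm\m_y^\alpha|_{T_zW^\pm}$, exactly as demanded by Definition \ref{compat_metric}. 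On the complement $E\setminus\Sigma F$, choose any smooth fibrewise metric $\m_y^{0}$, e.g.\ the restriction to $\Ver$ of any Riemannian metric on $E$. Let $\{\rho_\alpha,\rho_0\}$ be a smooth partition of unity subordinate to $\{U_\alpha\}\cup\{E\setminus\Sigma F\}$ and set
\begin{equation*}
\m_y \;=\; \rho_0\,\m_y^{0} \;+\; \sum_\alpha \rho_\alpha\,\m_y^{\alpha}.
\end{equation*}
This is a smoothly varying, positive-definite fibrewise metric on $E$.

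The final step is to verify that $\{\m_y\}$ remains compatible with $F$ after the gluing, and this is where the main (mild) subtlety lies. Compatibility is a purely pointwise condition at fold singularities: fixing a fold point $z\in\Sigma F$ with index $\lambda$, the data $(d^2 f_y)_z$ uniquely determines a bilinear form on $T_zW$ satisfying the three bullet points of Definition \ref{compat_metric}. Thus, whenever $z\in U_\beta$, the locally defined metric $\m_z^{\beta}$ must equal this common uniquely determined form at $z$, regardless of the chart. The convex combination defining $\m_z$ therefore reduces at $z$ to that same form (the term $\rho_0\,\m_z^{0}$ vanishes because $z\in\Sigma F$ forces $\rho_0(z)=0$), so $\m_z$ is compatible with $f_y$ at $z$. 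The only real obstacle is this consistency check; once it is in place, feeding $(\m_B,\{\m_y\},\H_E)$ into the Besse submersion construction produces the required metric $\m_E$ on $E$, compatible with $F$.
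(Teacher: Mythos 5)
There is a genuine gap, and it is located precisely where you flagged the subtlety: the uniqueness claim that supports the partition-of-unity step is false. Given a nondegenerate symmetric bilinear form $B = (d^2 f_y)_z$ of indefinite signature $(\lambda,\,\mu)$ on $T_zW$, the set of inner products $\m$ such that $\m^{-1}B$ is an involution (which is what the three conditions of Definition \ref{compat_metric} amount to) is \emph{not} a single point. It is a homogeneous space $O(\lambda,\mu)/\bigl(O(\lambda)\times O(\mu)\bigr)$: any element of the indefinite orthogonal group $O(\lambda,\mu)$ that does not preserve a chosen maximal positive/negative splitting produces a genuinely different compatible inner product. Concretely, for $B = -dx_1^2 + dx_2^2$ on $\R^2$, both the Euclidean form $dx_1^2+dx_2^2$ and the form with matrix
$\bigl(\begin{smallmatrix}\cosh 2\theta & -\sinh 2\theta \\ -\sinh 2\theta & \cosh 2\theta\end{smallmatrix}\bigr)$
for any $\theta\neq 0$ are compatible, and they disagree. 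Two overlapping normal-form charts $U_\alpha, U_\beta$ are related by a change of Morse coordinates, which is constrained only to preserve the quadratic form $B$ (so it lies in $O(\lambda,\mu)$, not in $O(\lambda)\times O(\mu)$); hence $\m_z^\alpha$ and $\m_z^\beta$ can differ at $z$. Worse, the convex combination $\rho_\alpha\m_z^\alpha + \rho_\beta\m_z^\beta$ is then generally \emph{not} compatible: one can check by the midpoint of the two forms above that $B\m^{-1}B=\m$ fails, so the set of compatible inner products is contractible but not convex inside the cone of inner products. Thus the ``consistency check'' you set up does not pass, and the patched fibrewise family $\{\m_y\}$ need not be compatible on $\Sigma F$.

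The high-level route is the same as the paper's (choose $\m_B$, a horizontal distribution, and a compatible fibrewise family, then invoke Besse Chapter 9), and in fact the paper's one-line justification --- that ``the space of compatible metrics is convex'' --- suffers from the same objection, since the compatibility constraint is not linear. The correct ingredient is contractibility rather than convexity, and the cleanest way to implement it is the polar-decomposition retraction: start from \emph{any} fibrewise metric $\m^0$ on $\Ver$, at each fold point set $A := (\m^0)^{-1}B$ (an $\m^0$-symmetric endomorphism with nonzero real eigenvalues), form the positive-definite square root $|A| := (A^2)^{1/2}$, and replace $\m^0$ by $\m := \m^0(\,\cdot\,, |A|\,\cdot\,)$. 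Then $\m^{-1}B = |A|^{-1}A$ is an involution, so $\m$ is compatible, and the assignment $(\m^0,B)\mapsto\m$ is smooth. Extending this modification over a tubular neighbourhood of each fold via a cutoff (damping $|A|$ back to the identity away from $\Sigma F$) produces a globally defined compatible fibrewise metric, with no consistency issue between charts because no charts are chosen --- only the chart-independent data $(\m^0, d^2f_y)$ enters. Feeding this into the Besse construction finishes the proof. If you want to keep the partition-of-unity structure of your argument instead, you must first fix the splitting $\Ver|_{\Sigma F} = \Ver^-\oplus\Ver^+$ (say, as the $\m^0$-eigenbundles of $d^2F$); once the splitting is fixed, compatibility is an affine --- hence convex --- condition and the gluing goes through.
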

\begin{proof}
On each fibre $\pi^{-1}(y), y\in B$, there is a metric $\m_y$ which is compatible with $f_y$. The local triviality condition on folds along with the fact that the space of admissible Morse functions on any fibre is convex, means that such a choice may be made continuously. Now choose a Riemannian metric $\m_B$ on the base B and an integrable horizontal distribution $\H_E$ for the submersion $\pi$. By chapter 9 of \cite{B}, this gives rise to a unique submersion metric with the desired properties. 
\end{proof}

 We now recall Theorem 2.12 of \cite{BHSW}, stated in the introduction, as Theorem \ref{main}. 
\vspace{3mm}

\noindent {\bf Theorem \ref{main}.}
{\em Let $\pi: E\to B$ be a bundle of smooth compact manifolds, where the fibre
  $W$ is a compact manifold with boundary $ \p W= X_0\sqcup X_1$ and the
  structure group is $\Diff(W;X_0,X_1)$.
  Let $F: E\to B\times I$ be an admissible
  family of Morse functions, with respect to $\pi$. In addition, we
  assume that the fibre bundle $\pi : E\to B$ is given the structure
  of a Riemannian submersion $\pi: (E,{\mathfrak m}_{E})\to
  (B,{\mathfrak m}_{B})$ such that the metric ${\mathfrak m}_{E}$ is
  compatible with the map $F: E \to B\times I$.  Finally, let $g_0 : B \to \Riem^+(X_0)$ be a smooth map. 

  Then there exists a metric $\bar{g}=\bar{g}(g_0,F,{\mathfrak m}_{E})$
  such that for each $y\in B$ the restriction $\bar{g}(y)=\bar{g}|_{W_y}$ on the
  fibre $W_y=\pi^{-1}(y)$ satisfies the following properties:
\begin{enumerate}
\item[{\bf (1)}] $\bar{g}(y)$ extends $g_0(y)$;
\item[{\bf (2)}] $\bar{g}(y)$ is a product metric $g_{i}(y)+dt^2$
  near $X_{i}\subset \p W_y$, $i=0,1$;
\item[{\bf (3)}] $\bar{g}(y)$ has positive scalar curvature
  on $W_y$.
\end{enumerate}
} 
\vspace{3mm}

A complete proof is provided in \cite{BHSW}, so we will give only a brief outline.
The background Riemannian metric $\m_E$ on $E$ gives a reduction of the structure group on $\Ver$ to $O(n+1)$. There is a further reduction of this structure group on folds of $F$. Suppose $\Sigma_0 \subset \Sigma F$ is a fold of $F$. In other words, near any point in $\Sigma_0$, $F$ is locally equivalent to the map (\ref{s-fold}). The fold $\Sigma_0$ is thus a smooth $k$-dimensional submanifold of $E$, and each point $w\in \Sigma_0$ is an index $s$ Morse singularity of the function $f_{\pi(w)}$. In keeping with our earlier notation, we will assume that $s=p+1$ and that $p+q+1=n$. Associated to each tangent space $\Ver_w=T_w{W_\pi(w)}$ of $w\in \Sigma_0$, is an orthogonal splitting (with respect to $\m_E$) of the tangent space into positive and negative eigenspaces of the Hessian $d^{2}f_{\pi(w)}$ at $w$. We denote these spaces $\Ver_{w}^{+}$ and $\Ver_{w}^{-}$ and the corresponding positive and negative eigen-subbundles of $\Ver$ by $\Ver^{+}$ and $\Ver^{-}$. They have respective dimensions $p+1$ and $q+1$ and give the restriction to the fold $\Sigma_0$, of $\Ver=\Ver^{-}\oplus \Ver^{+}$, the structure of an $O(p+1)\times O(q+1)$-bundle.

Roughly speaking, the entire construction goes through in such a way that, restricted to any fibre, it is the construction of Theorem \ref{GLcob}. Using the map $g_0$ and the horizontal distribution, the boundary $E_0$ may be equipped with a submersion metric. This metric is then extended over the rest of $E$ using the gradient flow of $F$, as before.  Away from folds, this is obvious. Near folds, we need a family version of the old construction. The main difficulty is that, near folds, non-triviality of the bundle $\Ver$ means that a global choice of Morse coordinates is not possible. In other words, it is not possible to choose globally, a diffeomorphism from a neighbourhood of the critical point to $D^{p+1}\times D^{q+1}$. 

This problem is equivalent, via the fibrewise exponential map, to the problem of choosing global orthonormal frames for the positive and negative eigen-bundles  $\Ver^{+}$ and $\Ver^{-}$.
The solution to this problem lies in the observation made in Lemma \ref{GLequiv}, that the Gromov-Lawson construction is equivariant with respect to the action of $O(p+1)\times O(q+1)$. Thus, globally choosing Morse coordinates is actually unnecessary as the splitting data is all that is required. 

To take advantage of this symmetry in the Gromov-Lawson construction, the authors use the fibrewise exponential map to pull back to a construction on the bundle $\Ver$. There is one technical difficulty here which is worth mentioning. The original construction involves adjusting the metric near a critical point $w$, on a ``cross-shaped neighbourhood" determined by the trajectory disks $D_{w}^{p+1}$ and $D_{w}^{q+1}$. Although these disks agree infinitesimally with the eigenplanes $\Ver_{w}^{-}$ and  $\Ver_{w}^{+}$ of the tangent space $\Ver_{w}$, their images under the exponential map do not line up as we would wish; see Fig. \ref{perturb}.

 \begin{figure}[htbp]
 \vspace{-2.7cm}
\begin{picture}(0,0)%
\includegraphics[scale=0.7]{Pictures/perturb.eps}
\end{picture}%
\setlength{\unitlength}{3947sp}%
\begingroup\makeatletter\ifx\SetFigFont\undefined%
\gdef\SetFigFont#1#2#3#4#5{%
  \reset@font\fontsize{#1}{#2pt}%
  \fontfamily{#3}\fontseries{#4}\fontshape{#5}%
  \selectfont}%
\fi\endgroup%
\begin{picture}(4921,4157)(1995,-6514)
  \put(3100,-3771){\makebox(0,0)[lb]{\smash{{\SetFigFont{10}{12}{\rmdefault}{\mddefault}{\updefault}{\color[rgb]{0,0,0}$D_{w}^{q+1}$}%
        }}}}
  \put(2726,-5486){\makebox(0,0)[lb]{\smash{{\SetFigFont{10}{12}{\rmdefault}{\mddefault}{\updefault}{\color[rgb]{0,0,0}$D_{w}^{p+1}$}%
        }}}}
  \put(3700,-3700){\makebox(0,0)[lb]{\smash{{\SetFigFont{10}{12}{\rmdefault}{\mddefault}{\updefault}{\color[rgb]{0,0,0}$D\Ver_{w}^{+}$}%
        }}}}
  \put(5351,-5214){\makebox(0,0)[lb]{\smash{{\SetFigFont{10}{12}{\rmdefault}{\mddefault}{\updefault}{\color[rgb]{0,0,0}$D{\Ver}_{w}^{-}$}%
        }}}}
\end{picture}%
\caption{The images of the trajectory disks $D_{w}^{p+1}$ and
  $D_{w}^{q+1}$ in $D_w\Ver(\Sigma)$ after application of the inverse
  exponential map}
\label{perturb}
\end{figure} 

It possible however, to isotopy $F$ to a function whose trajectory disks and eigenplanes agree on some small neighbourhood of the critical set and which is unchanged away from this neighbourhood. This is a rather delicate construction and full details may be found in \cite{BHSW}. In particular, we point out that this isotopy introduces no new critical points at any stage.

\subsection{A Review of Gromov-Lawson Concordance}
We now consider the case when $W$ is the cylinder $X\times I$ for some closed smooth manifold $X$. If $g_0$ is a psc-metric on $X$ and $f=(f,\m,V)$ is an admissible Morse triple, then the metric $\bar{g}=\bar{g}(g_0,f)$ obtained by application of Theorem \ref{GLcob}, is a concordance. We call this metric a {\em Gromov-Lawson concordance} with respect to $g_0$ and $f$. The main result of Part One can now be stated as follows.

\begin{Theorem} \label{conciso}
Let $X$ be a closed simply connected manifold of dimension $n\geq 5$. Let $g_0$ be a positive scalar curvature metric on $X$. Suppose $\bar{g}=\bar{g}(g_0,f)$ is a Gromov-Lawson concordance with respect to the metric $g_0$ and an admissible Morse triple $f=(f,\m,V)$ on the cylinder $X\times I$. Then the metrics $g_0$ and $g_1=\bar{g}|_{X\times\{1\}}$ are isotopic.
\end{Theorem}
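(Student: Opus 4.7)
The plan is to reduce to the case of a single canceling pair of critical points via Smale's handle cancellation, and then use the relative mixed torpedo isotopy lemmas (Lemma \ref{reltoriso} and Lemma \ref{relativemixcorners}) to trivialize the Gromov-Lawson concordance up to isotopy. The key topological input is that $X\times I$ is a trivial h-cobordism with simply connected boundary of dimension $n\geq 5$, so Smale's theorem allows us to rearrange the critical points of $f$ into consecutive cancelable pairs of indices $k$ and $k+1$, with admissibility forcing $k\leq n-3$. By induction on the number of pairs, it suffices to handle a single canceling pair $(p,q)$; when $f$ has no critical points, $\bar{g}$ is simply $g_0+dt^2$ and the conclusion is immediate.

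For a single canceling pair I would analyze $\bar{g}$ in a neighborhood of the union of the trajectory disks of $p$ and $q$. Crossing $p$ installs a standard torpedo handle and leaves the intermediate level set carrying a mixed torpedo metric of type $g_{Mtor}^{k-1,n-k}$ near the outgoing co-sphere. Crossing $q$ then applies the complementary surgery and produces a mixed torpedo of type $g_{Mtor}^{k,n-k-1}$ on a manifold diffeomorphic to $X$. The cancellation condition provides a topological ambient isotopy bringing the surgery sphere of $q$ into the position inverse to that of $p$, under which the two Gromov-Lawson surgery modifications become geometrically inverse. Applying the relative mixed torpedo isotopies (which pin the metric to $g_0$ outside a compact region) then yields an explicit path in $\Riem^+(X)$ from $g_1$ back to $g_0$, giving the desired isotopy.

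The main obstacle will be coordinating the topological handle cancellation with the metric construction while preserving positive scalar curvature throughout. Realizing the topological isotopy of surgery spheres through a family of psc-metrics requires carefully tracking the ``cross-shaped'' transition regions near the critical points and exploiting the $O(p+1)\times O(q+1)$-equivariance of the Gromov-Lawson construction (Lemma \ref{GLequiv}) to slide the standard handle pieces along the deformation without introducing negative scalar curvature. The hypotheses $n\geq 5$ and simple connectivity enter at two points: they are needed for the Whitney-type moves that underpin topological cancellation, and they ensure that every surgery sphere appearing in intermediate stages has codimension at least $3$, which is precisely the regime in which Gromov-Lawson surgery and the relative mixed torpedo isotopy lemmas preserve positivity of the scalar curvature.
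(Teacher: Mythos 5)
Your proposal follows the same two-step strategy as the paper: apply Morse--Smale handle theory (in the simply connected, $n\geq 5$, admissible regime) to rearrange all critical points into consecutive cancelling pairs confined to disjoint neighbourhoods, and then reduce by Theorem \ref{GLcobordismcompact} to the case of a single cancelling pair, where the relative mixed torpedo isotopy of Lemma \ref{reltoriso} deforms the standard region of $g''$ back to that of $g$. One small over-attribution: the $O(p+1)\times O(q+1)$-equivariance of Lemma \ref{GLequiv} is needed for the twisted family versions (Theorems \ref{main} and \ref{genmain}), where global Morse coordinates are unavailable; in the present single-fibre setting the parametrising space is contractible, so the cancellation step rests on the decomposition into original/transition/standard regions and the relative mixed torpedo isotopies rather than on equivariance.
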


\noindent The key geometric fact used in the proof of Theorem \ref{conciso} is Theorem \ref{concisodoublesurgery} below. 
 
\begin{Theorem} \label{concisodoublesurgery}
Let $f=(f,\m,V)$ be an admissible Morse triple on a smooth compact cobordism $W^{n+1}$. Suppose $f$ satisfies conditions (a),(b) and (c) below.
\begin{enumerate}
\item[{\bf (a)}] The function $f$ has exactly $2$ critical points $w$ and $z$ and $0<f(w)<f(z)<1$. 
\item[{\bf (b)}] The critical points $w$ and $z$ have Morse index $p+1$ and $p+2$ respectively. 
\item[{\bf (c)}] For each $t\in(f(w), f(z))$, the trajectory spheres $S_{t,+}^{q}(w)$ and $S_{t,-}^{p+1}(z)$ on the level set $f^{-1}(t)$, respectively emerging from the critical point $w$ and converging toward the critical point $z$, intersect transversely as a single point.
\end{enumerate}
\noindent Let $g$ be a metric of positive scalar curvature on $X$ and let $\bar{g}=\bar{g}(g,f)$ be a Gromov-Lawson cobordism with respect to $f$ and $g$ on $W$. Then $\bar{g}$ is a concordance and the metric $g''=\bar{g}|_{X\times\{1\}}$ on $X$ is isotopic to the original metric $g$. 
\end{Theorem}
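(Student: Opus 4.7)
The plan is to interpret condition (c) as the classical Morse--Smale cancellation criterion, which at the topological level immediately gives a diffeomorphism $W \cong X \times I$ rel $\p W$, so that $\bar g$ is indeed a concordance. The content of the theorem is then to realise this cancellation at the level of psc-metrics, that is, to show that the two Gromov--Lawson surgeries performed at $w$ and $z$ undo each other up to isotopy through psc-metrics on $X$.

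Pick regular values $c_1 < c_2$ with $f(w) < c_1 < c_2 < f(z)$ and set $X_m = f^{-1}(c_1)$. On the middle cobordism $f^{-1}([c_1,c_2])$, the Gromov--Lawson construction produces a product metric $g_m + dt^2$, where $g_m$ is obtained from $g$ by the first GL surgery at $w$. Inside $X_m$ there is a distinguished embedded disk $D \cong D^{p+1} \times S^q$ (the co-core of the $w$-handle) on which $g_m$ has the standard form $\gtor^{p+1}(\epsilon) + \delta^2 ds_q^2$. Condition (c) says the incoming trajectory sphere $S_-^{p+1}(z)$ meets the belt sphere $\{0\} \times S^q$ transversely in one point; by an ambient isotopy of $X_m$ (lifted to a psc-isotopy of $g_m$ via Lemma 1.3 of Part One), I would arrange that $S_-^{p+1}(z) \cap D$ is precisely the core disk $D^{p+1} \times \{*\}$, with the restricted metric a product of a torpedo and a spherical cap.

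The key geometric move is now the following. On a slightly enlarged disk neighbourhood of $S_-^p(w) \cup S_-^{p+1}(z)$ in $X_m$, the metric $g_m$ is, after a local adjustment of parameters, isometric to the mixed torpedo metric $g_{Mtor}^{p,q}$ on $S^n$. On the other hand, the ``standard preparatory form'' required to run the GL construction at $z$ on the sphere $S_-^{p+1}(z)$ is the mixed torpedo $g_{Mtor}^{p+1, q-1}$. Admissibility of $z$ forces $p+2 \leq n-2$, i.e.\ $q \geq 3$, so Lemma \ref{reltoriso} applies and provides an isotopy from $g_{Mtor}^{p,q}$ to $g_{Mtor}^{p+1,q-1}$, \emph{relative} to the boundary metric on the surrounding $S^{n-1}$. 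Thickening this slicewise isotopy over the product piece $f^{-1}([c_1,c_2])$ by the warping argument of Lemma 1.3 of Part One yields a one-parameter family of psc-concordances on the middle cobordism, fixed outside the disk region, connecting $\bar g$ to a new psc-concordance $\bar g'$ whose level $c_2$-slice carries the mixed torpedo $g_{Mtor}^{p+1,q-1}$ near $S_-^{p+1}(z)$.

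With the metric in this form, the GL construction at $z$ acts on precisely the standard model for which it was designed, and the resulting surgery is geometrically inverse to the one at $w$: the $(p+1)$- and $(p+2)$-handles cancel, and an explicit warped-product computation identifies the metric produced on $f^{-1}([c_2,1])$ as (isotopic to) the trivial cylinder over $g$. Concatenating this with the isotopy built in the previous paragraph yields the desired isotopy from $g$ to $g''$. I expect the main obstacle to be exactly the bookkeeping in the thickening step: the relative isotopy on $X_m$ must be made into an isotopy of psc-concordances on $f^{-1}([c_1,c_2])$ that is (i) strictly of product form outside the disk region, and (ii) compatible with the fixed structure imposed by the GL constructions at $w$ and $z$ on the boundary slices. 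This is precisely the reason one needs the relative/corner enhancements of the mixed torpedo lemmas (Lemmas \ref{reltoriso} and \ref{relativemixcorners}), rather than merely Lemma \ref{toriso}; without fixing the boundary throughout the isotopy, one cannot glue the deformation on the middle piece to the unchanged Gromov--Lawson cobordisms near $w$ and $z$.
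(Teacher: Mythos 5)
Your proposal shares the key ingredients with the paper's proof (the mixed torpedo metrics, the relative isotopy lemma, the recognition that the boundary must be fixed throughout), but the basic strategy you set out — making the modification at the middle slice $X_m = f^{-1}(c_1)$ and arguing that the second Gromov--Lawson surgery then ``geometrically inverts'' the first — diverges from the paper's argument and, as written, has genuine gaps.

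The most serious is the claim that on a disk neighbourhood in $X_m$ the metric $g_m$ is (after parameter adjustments) isometric to the mixed torpedo metric $g_{Mtor}^{p,q}$. The metric $g_{Mtor}^{p,q}$ on $S^n$ is built by gluing $\epsilon^2 ds_p^2 + \gtor^{q+1}(\delta)$ on $S^p\times D^{q+1}$ to $\gtor^{p+1}(\epsilon)+\delta^2 ds_q^2$ on $D^{p+1}\times S^q$. After the surgery at $w$, the metric $g_m$ on $X_m$ contains the second piece (on the cocore tube $D^{p+1}\times S^q$), but the first piece has been removed by surgery; what sits outside the cocore tube is the Gromov--Lawson \emph{transition} metric and then the original $g$. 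On any disk neighbourhood this gives you standard near the centre, a non-standard transition band, and then $g$; this is not $g_{Mtor}^{p,q}$ and cannot be made so by local parameter tweaks. (Incidentally, $S_{-}^p(w)$ no longer lives in $X_m$ once the surgery is performed — you presumably mean the belt sphere $S^q_{c_1,+}(w)$.) The paper therefore does not work at the intermediate slice: it decomposes the top metric $g''$ on $X_1$ into original, transition and standard regions (Figs. \ref{generalg3 copy}, \ref{g_2}) and shows that the standard region of $g''$ and the standard region of a suitably isotoped $g$ both sit inside a common transition/original collar on which the two metrics already agree. That is exactly what makes the relative lemma applicable. Your final step, that the surgery at $z$ on the prepared input produces something ``(isotopic to) the trivial cylinder over $g$,'' is precisely the content of the theorem and cannot be outsourced to an unspecified ``explicit warped-product computation'': the Gromov--Lawson output near $S^{p+1}(z)$ is always the standard post-surgery metric, never the original $g$, so an isotopy back to $g$ still has to be exhibited, and that is where the original/transition/standard decomposition and two applications of Lemma \ref{reltoriso} do all the work. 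Finally, the thickening step you describe would leave a mismatch at the level $c_2$ unless the construction above $c_2$ is also varied as a family (via Theorem \ref{GLcobordismcompact}); you need to say this and control it, rather than isotope only the middle product.

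In short, the ingredients are correct — admissibility gives $q \geq 3$, Lemma \ref{reltoriso} is the right tool, and insisting on the relative (boundary-fixing) version is essential — but the place where they are applied in the paper is the standard region of $g''$ at $X_1$, not the cocore tube in the intermediate slice, and the heart of the matter (identifying the output metric with $g$ up to isotopy) is asserted rather than argued in your sketch.
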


\noindent The fact that $\bar{g}$ is a concordance follows immediately from Theorem 5.4 of \cite{Smale} as conditions (a), (b) and (c) force $W$ to be diffeomorphic to the cylinder $X_0\times I$. The rest of the proof of Theorem \ref{concisodoublesurgery}, which we discuss in the next section, is long and technical and involves explicitly constructing an isotopy between the metrics $g$ and $g''$. Roughly speaking, simple connectivity and the fact that $n\geq 5$ mean that, via Morse-Smale theory, the proof of Theorem \ref{conciso} can be reduced down to finitely many applications of the case considered in Theorem \ref{concisodoublesurgery}.

One of the main goals of this paper is to strengthen Theorem  \ref{conciso} by extending the isotopy between the metrics $g_0$ and $g_1$ to an isotopy on $X\times I$ between the metrics $g_0+dt^{2}$ and $\bar{g}$. Moreover, this isotopy should fix the metric near $X\times \{0\}$ and maintain product structure near the boundary at every stage. As one might expect, it is enough to construct this isotopy for the case of two cancelling critical points, described in Theorem \ref{concisodoublesurgery}. Unfortunately, the original proof of Theorem \ref{concisodoublesurgery} from Part One does not generalise easily to this ``boundary" case. Over the remainder of this section, we will provide a simplification of this proof which will generalise in a very natural way.

\begin{proof}
The proof of Theorem \ref{concisodoublesurgery} involves the construction of an explicit isotopy between the original metric $g$ and the metric $g''$ which has been obtained from $g$ by two  surgeries in consecutive dimensions. Although quite complicated, the construction can be summarised in the following three steps. For details, see \cite{Walsh1}.
\begin{enumerate}
\item[{\bf (1)}]By carefully analysing the Gromov-Lawson construction, we observe that the metric $g''$ can be decomposed into various regions; see Fig. \ref{generalg3 copy}. Roughly speaking, there is an {\em original} region, diffeomorphic to $X\setminus D^{n}$ where $g''$ is still the original metric $g$. There is a {\em transition} region, diffeomorphic to the cylinder $S^{n-1}\times I$ where the metric transitions from the orginal metric near one end to a standard metric near the other. Finally there is a {\em standard} region, diffeomorphic to a disk $D^{n}$ where the metric is completely standard. By standard, we mean a metric which is built using round, torpedo or mixed torpedo metrics or one which is clearly isotopic to such a metric.

\begin{figure}
\vspace{-3cm}
\begin{picture}(0,0)%
\includegraphics[scale=0.8]{Pictures/generalg3.eps}%
\end{picture}%
\setlength{\unitlength}{3947sp}%
\begingroup\makeatletter\ifx\SetFigFont\undefined%
\gdef\SetFigFont#1#2#3#4#5{%
  \reset@font\fontsize{#1}{#2pt}%
  \fontfamily{#3}\fontseries{#4}\fontshape{#5}%
  \selectfont}%
\fi\endgroup%
\begin{picture}(4555,5036)(974,-6870)
\put(1020,-6311){\makebox(0,0)[lb]{\smash{{\SetFigFont{10}{8}{\rmdefault}{\mddefault}{\updefault}{\color[rgb]{0,0,0}Original}%
}}}}
\put(2001,-6349){\makebox(0,0)[lb]{\smash{{\SetFigFont{10}{8}{\rmdefault}{\mddefault}{\updefault}{\color[rgb]{0,0,0}Old transition}%
}}}}
\put(4514,-5574){\makebox(0,0)[lb]{\smash{{\SetFigFont{10}{8}{\rmdefault}{\mddefault}{\updefault}{\color[rgb]{0,0,0}$ds^{2}+\gtor^{p+1}(\epsilon)+{\delta}^{2}ds_{q-1}^{2}$}%
}}}}
\put(4514,-4761){\makebox(0,0)[lb]{\smash{{\SetFigFont{10}{8}{\rmdefault}{\mddefault}{\updefault}{\color[rgb]{0,0,0}$ds^{2}+\bar{g}_{Dtor}^{p+1}(\epsilon)+{\delta}^{2}ds_{q-1}^{2}$}%
}}}}
\put(2201,-5724){\makebox(0,0)[lb]{\smash{{\SetFigFont{10}{8}{\rmdefault}{\mddefault}{\updefault}{\color[rgb]{0,0,0}New transition}%
}}}}
\put(1331,-3349){\makebox(0,0)[lb]{\smash{{\SetFigFont{10}{8}{\rmdefault}{\mddefault}{\updefault}{\color[rgb]{0,0,0}$\gtor^{p+2}(\epsilon)+{\delta}^{2}ds_{q-1}^{2}$}%
}}}}
\put(1251,-2974){\makebox(0,0)[lb]{\smash{{\SetFigFont{10}{8}{\rmdefault}{\mddefault}{\updefault}{\color[rgb]{0,0,0}New standard metric}%
}}}}

\put(4214,-6686){\makebox(0,0)[lb]{\smash{{\SetFigFont{10}{8}{\rmdefault}{\mddefault}{\updefault}{\color[rgb]{0,0,0}$\gtor^{p+1}(\epsilon)+\gtor^{q}(\delta)$}%
}}}}
\end{picture}%

\caption{The metric $g''$}
\label{generalg3 copy}
\end{figure}

\item[{\bf (2)}] It is possible to isotope the metric $g$ to one which agrees completely with $g''$ on the original and transition regions but which has a different sort of standard metric on the standard region; see Fig. \ref{g_2}. We will retain the name $g$ for this metric. We then observe that these respective standard metrics, which are metrics on the disk $D^{n}$, extend to psc-metrics on the sphere $S^{n}$ which are demonstrably isotopic to the mixed torpedo metrics $g_{Mtor}^{p,q}$ and $g_{Mtor}^{p+1, q-1}$.

\begin{figure}[htbp]
\vspace{-4cm}
\begin{picture}(0,0)%
\hspace{2.5cm}
\includegraphics[scale=0.7]{Pictures/g2.eps}%
\end{picture}%
\setlength{\unitlength}{3947sp}%
\begingroup\makeatletter\ifx\SetFigFont\undefined%
\gdef\SetFigFont#1#2#3#4#5{%
  \reset@font\fontsize{#1}{#2pt}%
  \fontfamily{#3}\fontseries{#4}\fontshape{#5}%
  \selectfont}%
\fi\endgroup%
\begin{picture}(7788,5429)(361,-7165)
\put(1376,-5649){\makebox(0,0)[lb]{\smash{{\SetFigFont{10}{8}{\rmdefault}{\mddefault}{\updefault}{\color[rgb]{0,0,0}New transition}%
}}}}
\put(5551,-6761){\makebox(0,0)[lb]{\smash{{\SetFigFont{10}{8}{\rmdefault}{\mddefault}{\updefault}{\color[rgb]{0,0,0}Old standard metric}%
}}}}
\put(1176,-4224){\makebox(0,0)[lb]{\smash{{\SetFigFont{10}{8}{\rmdefault}{\mddefault}{\updefault}{\color[rgb]{0,0,0}New standard metric}%
}}}}
\put(1464,-4011){\makebox(0,0)[lb]{\smash{{\SetFigFont{10}{8}{\rmdefault}{\mddefault}{\updefault}{\color[rgb]{0,0,0}$\gtor^{p+1}(\epsilon)+\gtor^{q}(\delta)$}%
}}}}
\put(5501,-4799){\makebox(0,0)[lb]{\smash{{\SetFigFont{10}{8}{\rmdefault}{\mddefault}{\updefault}{\color[rgb]{0,0,0}Easy transition}%
}}}}
\end{picture}%

\caption{The metric $g$ after isotopic adjustment to coincide with the metric $g''$ on all but the standard region.}
\label{g_2}
\end{figure}

\item[{\bf (3)}] Focussing now on the transition region, where $g$ and $g''$ agree, we make adjustments on a cylindrical region near the standard end to isotopy $g$ and $g''$ into metrics which have the form of metrics obtained by the Gromov-Lawsom connect sum construction. In one case, we obtain a connected sum of a psc-metric on $X$ with the metric $g_{Mtor}^{p,q}$ on the sphere $S^{n}$. In the other case, we obtain a connected sum involving the same psc-metric on $X$ but this time with the sphere metric $g_{Mtor}^{p+1,q-1}$. As the Gromov-Lawson construction goes through for continuous compact families of metrics and as the metrics $g_{Mtor}^{p,q}$ and $g_{Mtor}^{p+1,q-1}$ are isotopic, we get that the metrics $g$ and $g''$ are isotopic also.  
\end{enumerate}

The simplification we propose will focus only on the third step above. We will show that it is unnecessary to first isotopy the metrics $g$ and $g''$ to ones taking the form of the Gromov-Lawson connected sum construction. In fact the required isotopy can be constructed so as to turn the standard region of one metric into the standard region of the other while fixing the original and transition regions (where both metrics already agree).

\begin{figure}[htbp]
\vspace{-3cm}
\begin{picture}(0,0)%
\hspace{5cm}
\includegraphics[scale=0.5]{Pictures/glconcisosecondgpp.eps}%
\end{picture}%
\setlength{\unitlength}{3947sp}%
\begingroup\makeatletter\ifx\SetFigFont\undefined%
\gdef\SetFigFont#1#2#3#4#5{%
  \reset@font\fontsize{#1}{#2pt}%
  \fontfamily{#3}\fontseries{#4}\fontshape{#5}%
  \selectfont}%
\fi\endgroup%
\begin{picture}(7349,3720)(995,-3850)
\put(4519,-2034){\makebox(0,0)[lb]{\smash{{\SetFigFont{10}{14.4}{\rmdefault}{\mddefault}{\updefault}{\color[rgb]{0,0,0}standard}%
}}}}
\put(3719,-3334){\makebox(0,0)[lb]{\smash{{\SetFigFont{10}{14.4}{\rmdefault}{\mddefault}{\updefault}{\color[rgb]{0,0,0}transition}%
}}}}
\put(2519,-3734){\makebox(0,0)[lb]{\smash{{\SetFigFont{10}{14.4}{\rmdefault}{\mddefault}{\updefault}{\color[rgb]{0,0,0}original}%
}}}}
\end{picture}%
\caption{The metric $g''$ after a minor adjustment.}
\label{g''second}
\end{figure}

\begin{figure}[htbp]
\vspace{-4cm}
\begin{picture}(0,0)%
\hspace{3cm}
\includegraphics[scale=0.5]{Pictures/gppthird.eps}%
\end{picture}%
\setlength{\unitlength}{3947sp}%
\begingroup\makeatletter\ifx\SetFigFont\undefined%
\gdef\SetFigFont#1#2#3#4#5{%
  \reset@font\fontsize{#1}{#2pt}%
  \fontfamily{#3}\fontseries{#4}\fontshape{#5}%
  \selectfont}%
\fi\endgroup%
\begin{picture}(7349,3720)(995,-3850)
\end{picture}%
\caption{The metrics obtained by successive isotopic adjustments on metric $g''$}
\label{g''third}
\end{figure}

Recall from the proof of Theorem \ref{concisodoublesurgery}, that the metric $g''$ can, after a minor isotopy, be assumed to take the form shown in Fig. \ref{g''second}. The right side of the standard region of this metric takes the form of the mixed torpedo metric with boundary $\bar{g}_{Mtor}^{p+1, q}$. Thus, using Lemma \ref{reltoriso}, we can isotopy this metric to one which takes the form shown on the left hand image in Fig. \ref{g''third}. Repeating this procedure on the vertical part of this new standard region, we obtain the metric described in the right hand picture in Fig. \ref{g''third}. This metric is easily isotopied back to the metric shown in Fig. \ref{g_2} and then back to the original metric $g$, via the results of \cite{Walsh1}.
\end{proof}

\section{Isotoping a Gromov-Lawson Concordance}

In this section, we will prove the following extension of Theorem \ref{conciso}. Theorem \ref {conciso} itself then follows as an immediate corollary.

\subsection{Gromov-Lawson Concordance is Isotopic to a Standard Product}

\begin{Theorem} \label{relconciso}
Let $X$ be a closed simply connected manifold of dimension $n\geq 5$. Let $g$ be a positive scalar curvature metric on $X$. Suppose $\bar{g}=\bar{g}(g,f)$ is a Gromov-Lawson concordance with respect to the metric $g$ and an admissible Morse triple $f=(f,\m,V)$ on the cylinder $X\times I$. Then the metrics $\bar{g}$ and $g+dt^{2}$ are isotopic, relative to $g$ on $X\times \{0\}$, in $\Riem^{+}(X\times I, \p (X\times I))$.
\end{Theorem}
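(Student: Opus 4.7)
The plan is to run essentially the same reduction used in the proof of Theorem \ref{conciso}, but one dimension higher: rather than producing an isotopy between the two boundary metrics $g_0,g_1$ of a Gromov--Lawson concordance, we produce an isotopy of the concordance itself, keeping the metric fixed on $X\times\{0\}$. First, I would reduce to the case where the admissible Morse triple $f=(f,\m,V)$ has exactly two critical points $w,z$ of consecutive indices $p+1,p+2$ whose trajectory spheres on a middle level set intersect transversely in one point, i.e.\ the situation of Theorem \ref{concisodoublesurgery}. The hypotheses $n\geq 5$, $\pi_1(X)=0$, and admissibility allow one to rearrange and cancel the critical points of $f$ step by step via Morse--Smale theory; combining this with the fact that the Gromov--Lawson construction behaves continuously on compact families of admissible Morse triples (Theorem \ref{GLcobordismcompact}) splits the problem into finitely many pieces, each of which is a cancellation of a single pair $(w,z)$ as above.

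For such a cancelling pair, the strategy is to build an isotopy $\{\bar{g}_s\}_{s\in I}\subset \Riem^{+}\!\bigl(X\times I,(\p(X\times I),g\sqcup g_1)\bigr)$ with $\bar{g}_0=\bar g$ and $\bar{g}_1=g+dt^2$, and with $\bar{g}_s|_{X\times\{0\}}=g$ for every $s$. Recall from the simplified proof of Theorem \ref{concisodoublesurgery} outlined at the end of Section \ref{review} that the metric $g''=\bar{g}|_{X\times\{1\}}$ decomposes into an \emph{original} region (where $g''=g$), a \emph{transition} region diffeomorphic to $S^{n-1}\times I$, and a \emph{standard} region diffeomorphic to $D^n$, and the whole proof of isotopy between $g$ and $g''$ is achieved by successively swapping two standard mixed torpedo metrics $\bar{g}_{Mtor}^{p+1,q}$ and $\bar{g}_{Mtor}^{p,q+1}$ using Lemma \ref{reltoriso}. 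The key observation is that $\bar g$ itself inherits an analogous decomposition on $X\times I$: away from a small neighbourhood of the trajectory disks of $w$ and $z$, $\bar g$ is simply a product of $g$ with $dt^2$; inside this neighbourhood, which is a disk $D^{n+1}$ with corners along $X\times\{0\}$ and $X\times\{1\}$, $\bar g$ takes an explicit standard form built out of torpedo, double torpedo, and mixed torpedo metrics with corners of the type systematically analysed in Section \ref{review}.

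The main step is therefore to replace the standard region of $\bar g$ by the standard region of $g+dt^2$ relative to the surrounding transition region. Here I would invoke the corner version of the mixed torpedo isotopy lemma, namely Lemma \ref{relativemixcorners}: the metrics $\bar{\bar{g}}_{Mtor}^{p,q}$ and $\bar{\bar{g}}_{Mtor}^{p+1,q-1}$ on $D^{n+1}$ are isotopic relative to the boundary metric $\bar g_{Mtor}^{p,q-1}$. This is precisely the corners-analogue of Lemma \ref{reltoriso}, which was the final missing ingredient in the simplified proof of Theorem \ref{concisodoublesurgery}. Applying this in two successive stages, as in the last paragraph of that proof (cf.\ Figures \ref{g''second}--\ref{g''third}, lifted up by one dimension), converts the standard region of $\bar g$ into the standard region of the trivial concordance $g+dt^2$ while fixing a collar neighbourhood of its boundary. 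Outside the standard region, $\bar g$ already agrees with $g+dt^2$ after a slicewise isotopic adjustment, obtained by running the argument of Lemma \ref{reltoriso} (i.e.\ performing the isotopy outside a collar and smoothing in the collar direction while rescaling to preserve positive scalar curvature, as in Lemma 1.3 of \cite{Walsh1}).

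The main obstacle is bookkeeping at the corner $X\times\{0\}$: one must ensure that every metric in the constructed isotopy has a genuine product structure near $\p(X\times I)$, restricts to $g$ on $X\times\{0\}$, and that the fibrewise isotopies used to replace the standard region extend smoothly through the corner where the standard region meets $X\times\{0\}$. This is precisely what the relative/corner versions of the torpedo lemmas (Lemmas \ref{reltorplemma}, \ref{relcornerlemma}, \ref{relmixtorplemma}, \ref{mixcorners}, \ref{relativemixcorners}) were developed to handle: each of those statements was proved by running the isotopy only away from a collar and then rescaling in the collar direction to preserve both positivity of scalar curvature and the prescribed boundary structure. Once these pieces are assembled, concatenating the finitely many local isotopies coming from the Morse-theoretic reduction yields the desired relative isotopy in $\Riem^{+}(X\times I, \p(X\times I))$ between $\bar g$ and $g+dt^2$, completing the proof.
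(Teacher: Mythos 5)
Your proposal matches the paper's proof in all essential respects: reduce via Morse--Smale rearrangement and the compact-family construction (Theorem \ref{GLcobordismcompact}) to disjoint cancelling pairs, then in each critical-pair neighbourhood run the original/transition/standard decomposition and apply the relative corner mixed-torpedo isotopy (Lemma \ref{relativemixcorners}) twice, which is exactly what the paper defers to Theorem \ref{relconcisodoublesurgery}. The only quibble is your notation $\Riem^{+}\!\bigl(X\times I,(\p(X\times I),g\sqcup g_1)\bigr)$, which over-constrains the $X\times\{1\}$ end (the endpoint $g+dt^2$ restricts to $g$ there, not $g_1$, so it cannot lie in that space); the condition you state immediately afterward, that only the metric on $X\times\{0\}$ is fixed, is the correct one.
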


\begin{proof} The proof follows along the lines as that of Theorem \ref{conciso}. Using Morse-Smale theory, we may construct an isotopy through Morse functions to one where all of the critical points are arranged into cancelling pairs, as described in Part One. In particular, we may assume that each cancelling pair of critical points and its connecting trajectory arc, is contained in a neighbourhood and that each of these neighbourhoods is disjoint. As the Gromov-Lawson construction goes through for compact contractible families of Morse functions (Part One, Theorem 0.5), this isotopy through Morse functions gives rise to a corresponding isotopy through Gromov-Lawson concordances from the original one to one which has this nice arrangement of critical points.  

Away from the ``critical pair neighbourhoods", the metric $\bar{g}$ is a product $g+dt^{2}$. By making local adjustments to the metric on each of the critical pair neighbourhoods we can globally isotope $\bar{g}$ to a standard product $g+dt^{2}$. Once again, most of the work involves dealing with case of a Morse function with exactly $2$ cancelling critical points, as in the proof of Theorem \ref{concisodoublesurgery}. This is done in Theorem \ref{relconcisodoublesurgery} below. Once the theorem is proved for this case, the remainder of the proof follows almost exactly as before. \end{proof}

\subsection{The Case of Two Cancelling Critical Points}

\begin{Theorem} \label{relconcisodoublesurgery}
Let $f=(f,\m,V)$ be an admissible Morse triple on $X\times I$ and suppose $f$ satisfies conditions (a),(b) and (c) of Theorem \ref{concisodoublesurgery}. Let $g$ be a metric of positive scalar curvature on $X$ and let $\bar{g}=\bar{g}(g,f)$ be a Gromov-Lawson cobordism with respect to $f$ and $g$ on $X\times I$. Then the metrics $\bar{g}$ and $g+dt^{2}$ on $X\times I$ are isotopic, relative to $g$ on $X\times \{0\}$, in $\Riem^{+}(X\times I, \p (X\times I))$. 
\end{Theorem}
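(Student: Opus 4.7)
The plan is to upgrade the simplified argument used at the end of the excerpt for Theorem \ref{concisodoublesurgery} to the level of the full cobordism metric on $X\times I$, rather than merely its restriction $g''$ on $X\times\{1\}$. The central new technical ingredient is the corner-relative isotopy provided by Lemma \ref{relativemixcorners}, which was formulated precisely for metrics on manifolds with corners of exactly the type that arise here. The ``corner'' appears because an isotopy of $X\times I$ metrics rel $X\times\{0\}$ naturally takes values in metrics on $(X\times I)\times I$, a manifold with corners at $X\times\{0,1\}\times\{0,1\}$.

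First I would localize. By the construction reviewed in Section 2.5, the metric $\bar{g}$ agrees with the product $g+dt^{2}$ outside a tubular neighborhood $U$ of the union of trajectory disks of the cancelling pair $\{w,z\}$. Conditions (a)--(c) together with Smale's cancellation theorem imply that $U$ can be chosen diffeomorphic to $D^{n}\times I$ as a manifold with corners, meeting $X\times\{0\}$ in a disk $D^{n}_{0}$, meeting $X\times\{1\}$ in a disk $D^{n}_{1}$, and with ``side'' boundary $S^{n-1}\times I$ sitting in the interior of $X\times I$. On $U$, the metric $\bar{g}$ is a product $g|_{D^{n}}+dt^{2}$ near $D^{n}_{0}$ and near the side cylinder, while near $D^{n}_{1}$ it carries the full Gromov--Lawson structure arising from the two cancelling surgeries. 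Everything reduces to constructing a relative isotopy of $\bar{g}|_{U}$ to $g|_{D^{n}}+dt^{2}$, fixing the boundary data on $D^{n}_{0}$ and on the side cylinder.

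Following the simplification at the end of the proof of Theorem \ref{concisodoublesurgery}, after a preliminary relative isotopy supported in the interior of $U$, the metric $\bar{g}|_{U}$ may be brought into a canonical form whose standard region near $D^{n}_{1}$ is a mixed torpedo metric with corners $\bar{\bar{g}}_{Mtor}^{p,q}$ of the type introduced in Section 2.3, and whose complement consists of pure products and standard transitions. Lemma \ref{relativemixcorners} now furnishes a relative isotopy between $\bar{\bar{g}}_{Mtor}^{p,q}$ and $\bar{\bar{g}}_{Mtor}^{p+1,q-1}$ fixing the boundary metric $\bar{g}_{Mtor}^{p,q-1}$; iterating this, together with Lemma \ref{mixcorners} which deforms the mixed torpedo all the way to the surgery-free corner torpedo $\bar{\bar{g}}_{tor}^{n+1}$, the cancelling-pair structure of $\bar{g}|_{U}$ can be unwound. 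At that point $\bar{g}|_{U}$ has been deformed to a pure product $g|_{D^{n}}+dt^{2}$, and gluing with the product structure on $(X\times I)\setminus U$ yields the desired relative isotopy from $\bar{g}$ to $g+dt^{2}$.

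The main obstacle will be the bookkeeping at the corners where $\partial U$ meets $\partial(X\times I)$ and at the various product-structure interfaces, ensuring that throughout the isotopy the global metric on $X\times I$ remains in $\Riem^{+}(X\times I,\p(X\times I))$ and restricts to $g$ on $X\times\{0\}$. This is handled precisely by the slicewise-near-boundary property of the isotopies in Lemmas \ref{reltoriso} and \ref{relativemixcorners}, combined with the product-structure-preserving rescaling trick of Lemma 1.3 of \cite{Walsh1} to guarantee that positivity of scalar curvature survives under the corner smoothing. With this care, the successive local isotopies glue into a global relative isotopy from $\bar{g}$ to $g+dt^{2}$, completing the proof.
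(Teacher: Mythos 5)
Your proposal follows essentially the same strategy as the paper's proof: localize to a neighborhood of the cancelling critical pair, identify the standard region as a mixed torpedo metric with corners, and then unwind it using the corner-relative isotopies of Lemmas \ref{relativemixcorners} and \ref{mixcorners}, with the boundary bookkeeping handled by the slicewise rescaling of Lemma 1.3 from Part One. The paper carries this out by tracing explicitly through the construction of $\bar{g}$ in staged schematics (first the trace of the index-$(p+1)$ surgery, then the handle attachment for the index-$(p+2)$ critical point, then Lemma \ref{relcornerlemma} to standardise the $D^{q+2}$-fibre corners), but the decomposition, the key lemmas, and the order of the deformations are the same as in your outline.
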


The proof of Theorem \ref{relconcisodoublesurgery} is very much in the spirit of the proof Theorem \ref{concisodoublesurgery}. Before beginning, we provide a brief outline of the main steps.

\begin{enumerate}
\item[{\bf (1)}]As in the original theorem, we decompose the metric $\bar{g}$ on $X\times I$ into various regions. Roughly speaking, the original region here is diffeomorphic to $X\times I \setminus D^{n}\times [1-\epsilon, 1]$. The disc $D^{n}\subset X$ is precisely the region of $X$ where adjustments are made in the original construction.

\item[{\bf (2)}] We note that inside the adjusted region $D\times[1-\epsilon, 1]$, the metric has various non-standard (transition) and standard pieces. Recall that in the original case (on just $X$) these standard pieces took the form of the mixed torpedo metrics with boundary. In this case, the standard pieces will take the form of a combination of mixed torpedo metrics with corners. 

\item[{\bf (3)}] We construct an isotopy of the metric $g+dt^{2}$ to one which agrees completely with $\bar{g}$ on the original and transition regions but which, on the standard region consists of a different combination of mixed torpedo metrics with corners. Using Lemma \ref{mixcorners}, we make isotopic adjustments to the standard region of this metric to turn it into the metric $\bar{g}$. These adjustments are completely analogous to those made in the simplified proof of Theorem \ref{concisodoublesurgery} above.
\end{enumerate}

\begin{proof}
We will begin with a careful analysis of the metric $\bar{g}$. It is worth recalling the main steps in the construction of this metric. The submanifold $f^{-1}[0,\frac{1}{2}]$ is the trace of a surgery on an embedded sphere $S^{p}\subset X$. On this region, the metric $\bar{g}$ takes the form shown in the schematic picture in the bottom right of Fig. \ref{gajersurgery}. Near $f^{-1}(0)$, this metric is the standard product $g+dt^{2}$, while near $f^{-1}(\frac{1}{2})$, it is the product $g'+dt^{2}$ where $g'$ is the metric obtained by applying Gromov-Lawson surgery to $g$ with respect to $S^{p}\subset X$. 

Recall from Part One, that this metric is obtained by first constructing an isotopy between $g$ and a metric $g_{std}$. The metric $g_{std}$ agrees with $g$ outside a tubular neighbourhood of the surgery sphere, while near $S^{p}$, takes the form $ds_{p}^{2}+g_{tor}^{q+1}$. The construction of an isotopy between $g$ and $g_{std}$ is detailed in Theorem 2.3 of Part One. Using Lemma 1.3 of Part One, this isotopy gives rise to the concordance on $X\times I$ which is $g+dt^{2}$ near $X\times\{0\}$ and $g_{std}+dt^{2}$ near $X\times \{1\}$. This corresponds to the top left picture in Fig. \ref{gajersurgery}. The picture immediately to the right of this, in Fig. \ref{gajersurgery}, describes a minor but important adjustment to the concordance $\bar{g}$, to better facilitate handle attachment. Essentially we attach a product $I\times S^{p}\times D^{q+1}$ with metric $dt^{2}+ds_{p}^{2}\times g_{tor}^{q+1}$ to the standard part of the concordance and make appropriate smoothing adjustments. In Theorem 0.4 of Part One, we showed how it is possible to do this and adjust the metric accordingly to maintain the product structure. 

In Theorem 2.2 of Part One, we complete the construction of $\bar{g}$ on $f^{-1}[0,\frac{1}{2}]$ by smoothly attaching a standard piece $\gtor^{p+1}+\gtor^{q+1}$ to the above concordance and making necessary adjustments to ensure that the resulting metric is a product near the boundary. This is shown in the bottom right picture of Fig \ref{gajersurgery}. At the bottom left, we show the result of re-attaching the handle we removed, equipped with a standard metric. It follows from Theorem 0.4 of Part One, that this metric is isotopic to the orginal concordance (top-left picture of Fig. \ref{gajersurgery}) and therefore to $g+dt^{2}$. Thus, it will be enough to show that the metric $\bar{g}$ is isotopic to the metric described in the bottom left picture of Fig. \ref{gajersurgery}.

\begin{figure}[htbp]
\vspace{-2cm}
\begin{picture}(0,0)%
\hspace{2cm}
\includegraphics[scale=0.6]{Pictures/gajertrace3.eps}%
\end{picture}%
\setlength{\unitlength}{3947sp}%
\begingroup\makeatletter\ifx\SetFigFont\undefined%
\gdef\SetFigFont#1#2#3#4#5{%
  \reset@font\fontsize{#1}{#2pt}%
  \fontfamily{#3}\fontseries{#4}\fontshape{#5}%
  \selectfont}%
\fi\endgroup%
\begin{picture}(7349,3720)(995,-3850)
\end{picture}%
\caption{The concordance between $g$ and $g_{std}$ (top-left), a slightly adjusted version for ease of handle attachment (top-right), an alternate (but isotopic) version of this concordance (bottom-left) and the metric $\bar{g}$ restricted to $f^{-1}[0, \frac{1}{2}]$ (bottom-right)} 
\label{gajersurgery}
\end{figure}

Unfortunately, in describing the metric $\bar{g}$ obtained by performing a second surgery, the schematic picture of Fig. \ref{gajersurgery} is somewhat inadequate. Instead, we will use ``solid versions" of the schematics in Fig. \ref{g_2} and Fig. \ref{generalg3 copy}. We begin by providing an alternative schematic description of the metrics depicted in Fig. \ref{gajersurgery}. We begin with the concordance between $g$ and $g_{std}$ described in the top-left picture in Fig. \ref{gajersurgery}. It is useful to think of this as the solid object shown in Fig. \ref{Solidmetric}. In turn, the metric depicted in the top right of Fig. \ref{gajersurgery} can be re-interpreted as the metric shown in Fig. \ref{Solidmetric3}. Notice that the shaded disks at the right end correspond to torpedo metrics of the form $\gtor^{q+1}$ (as described in Fig. \ref{solidschematic}).

\begin{figure}[htbp]
\vspace{-3cm}
\begin{picture}(0,0)%
\hspace{4cm}
\includegraphics[scale=0.7]{Pictures/Solidmetric.eps}%
\end{picture}%
\setlength{\unitlength}{3947sp}%
\begingroup\makeatletter\ifx\SetFigFont\undefined%
\gdef\SetFigFont#1#2#3#4#5{%
  \reset@font\fontsize{#1}{#2pt}%
  \fontfamily{#3}\fontseries{#4}\fontshape{#5}%
  \selectfont}%
\fi\endgroup%
\begin{picture}(7349,3720)(995,-3850)
\end{picture}%
\caption{An alternative schematic for the concordance between $g$ and $g_{std}$ described in the top-left of Fig. \ref{gajersurgery}} 
\label{Solidmetric}
\end{figure}

\begin{figure}[htbp]
\vspace{-4cm}

\begin{picture}(0,0)%
\hspace{4cm}
\includegraphics[scale=0.7]{Pictures/Solidmetric3.eps}%
\end{picture}%
\setlength{\unitlength}{3947sp}%
\begingroup\makeatletter\ifx\SetFigFont\undefined%
\gdef\SetFigFont#1#2#3#4#5{%
  \reset@font\fontsize{#1}{#2pt}%
  \fontfamily{#3}\fontseries{#4}\fontshape{#5}%
  \selectfont}%
\fi\endgroup%
\begin{picture}(7349,3720)(995,-3850)
\end{picture}%
\caption{An alternative schematic for the adjusted concordance shown in the top-right of Fig. \ref{gajersurgery}} 
\label{Solidmetric3}
\end{figure}

We may now smoothly attach either the solid handle $ds_{p}^{2}+\bar{\bar{g}}_{tor}^{q+2}$, to obtain the metric corresponding to the bottom left picture in Fig. \ref{gajersurgery}, or the solid handle $\gtor^{p+1}+\gtor^{q+1}$ to obtain the metric $\bar{g}|_{f^{-1}[0,1]}$. The former is depicted in Fig. \ref{Solidmetric4cyl} below. The latter was earlier depicted as the bottom right image in Fig. \ref{gajersurgery} and is now alternatively described in Fig. \ref{Solidmetric4} below. We include in each case, a schematic representation of the embedded surgery $(p+1)$-sphere (or disk in the case of Fig \ref{Solidmetric4cyl}) corresponding to the second critical point.

\begin{figure}[htbp]
\vspace{-3cm}

\begin{picture}(0,0)%
\hspace{1cm}
\includegraphics[scale=0.7]{Pictures/Solidmetric4cyl.eps}%
\end{picture}%
\setlength{\unitlength}{3947sp}%
\begingroup\makeatletter\ifx\SetFigFont\undefined%
\gdef\SetFigFont#1#2#3#4#5{%
  \reset@font\fontsize{#1}{#2pt}%
  \fontfamily{#3}\fontseries{#4}\fontshape{#5}%
  \selectfont}%
\fi\endgroup%
\begin{picture}(7349,3720)(995,-3850)
\end{picture}%
\caption{An alternative schematic for alternate concordance between $g$ and $g_{std}$ shown in the bottom-left of Fig \ref{gajersurgery}} 
\label{Solidmetric4cyl}
\end{figure}

\begin{figure}[htbp]
\vspace{-3cm}

\begin{picture}(0,0)%
\hspace{2cm}
\includegraphics[scale=0.7]{Pictures/Solidmetric5.eps}%
\end{picture}%
\setlength{\unitlength}{3947sp}%
\begingroup\makeatletter\ifx\SetFigFont\undefined%
\gdef\SetFigFont#1#2#3#4#5{%
  \reset@font\fontsize{#1}{#2pt}%
  \fontfamily{#3}\fontseries{#4}\fontshape{#5}%
  \selectfont}%
\fi\endgroup%
\begin{picture}(7349,3720)(995,-3850)
\end{picture}%
\caption{An alternative schematic for the metric $\bar{g}$ on $f^{-1}[0,\frac{1}{2}]$ initially depicted in the bottom-right of Fig. \ref{gajersurgery}}
\label{Solidmetric4}
\end{figure}

The next figure, Fig. \ref{Solidmetricsix}, describes the entire metric on $X\times I$ after extension via the second surgery. The shaded strips correspond to a product metric on a region differeomorphic to $S^{p}\times D^{q+2}$. On the $S^{p}$ factor, the metric is the standard round metric. However, on the $D^{q+2}$ factor, the metric is a psc-metric with corners of the type shown in the middle part of Fig. \ref{solidfibre1}. This metric is almost standard and can be isotoped, using the techniques of Lemma \ref{relcornerlemma}, to the standard torpedo metric with corners, $\bar{\bar{g}}^{q+2}$. Recall here that $q\geq 3$ and so the hypotheses of Lemma \ref{relcornerlemma} are satisfied. Thus the metric $\bar{g}$ can easily be isotoped to the one described in Fig. \ref{Solidmetric6next}. 
\begin{figure}[htbp]
\vspace{-1cm}

\begin{picture}(0,0)%
\hspace{2cm}
\includegraphics[scale=0.6]{Pictures/Solidmetric6.eps}%
\end{picture}%
\setlength{\unitlength}{3947sp}%
\begingroup\makeatletter\ifx\SetFigFont\undefined%
\gdef\SetFigFont#1#2#3#4#5{%
  \reset@font\fontsize{#1}{#2pt}%
  \fontfamily{#3}\fontseries{#4}\fontshape{#5}%
  \selectfont}%
\fi\endgroup%
\begin{picture}(7349,3720)(995,-3850)
\end{picture}%
\caption{The metric $\bar{g}$ on $X\times I$ after some minor standardising adjustments} 
\label{Solidmetricsix}
\end{figure}

\begin{figure}[htbp]
\vspace{-3.5cm}

\begin{picture}(0,0)%
\hspace{3cm}
\includegraphics[scale=0.6]{Pictures/Solidfibre1.eps}%
\end{picture}%
\setlength{\unitlength}{3947sp}%
\begingroup\makeatletter\ifx\SetFigFont\undefined%
\gdef\SetFigFont#1#2#3#4#5{%
  \reset@font\fontsize{#1}{#2pt}%
  \fontfamily{#3}\fontseries{#4}\fontshape{#5}%
  \selectfont}%
\fi\endgroup%
\begin{picture}(7349,3720)(995,-3850)
\end{picture}%
\caption{A closer view and interpretation of the shaded fibre metric on $D^{q+2}$ from Fig. \ref{Solidmetricsix} (left and middle) along with a more standard version after isotopy (right)} 
\label{solidfibre1}
\end{figure}

\begin{figure}[htbp]
\vspace{-0.5cm}

\begin{picture}(0,0)%
\hspace{2cm}
\includegraphics[scale=0.6]{Pictures/Solidmetric6next.eps}%
\end{picture}%
\setlength{\unitlength}{3947sp}%
\begingroup\makeatletter\ifx\SetFigFont\undefined%
\gdef\SetFigFont#1#2#3#4#5{%
  \reset@font\fontsize{#1}{#2pt}%
  \fontfamily{#3}\fontseries{#4}\fontshape{#5}%
  \selectfont}%
\fi\endgroup%
\begin{picture}(7349,3720)(995,-3850)
\end{picture}%
\caption{A more standard version of the metric $\bar{g}$ after some elementary isotopy} 
\label{Solidmetric6next}
\end{figure}

At this stage, we employ the method used in the simplified proof of Theorem \ref{concisodoublesurgery} (see in particular Fig. \ref{g''third}),  to adjust the metric described in Fig \ref{Solidmetric6next}. After two iterations of Lemma \ref{mixcorners} to the standard region, we obtain the metric shown in Fig. \ref{Solidmetric7}. This metric is obtained from the metric shown in Fig. \ref{Solidmetric4cyl} by two iterations of the isotopy from Lemma \ref{relativemixcorners} which exactly mimics that of the analogous case in the proof of Theorem \ref{concisodoublesurgery}. This  is exactly the metric depicted in the top left schematic of Fig. \ref{gajersurgery} and in turn is isotoped back to the standard product $g+dt^{2}$ by means of Theorem 0.4 of Part One, completing the proof.

\begin{figure}[htbp]
\vspace{-2cm}

\begin{picture}(0,0)%
\hspace{1cm}
\includegraphics[scale=0.6]{Pictures/Solidmetric7.eps}%
\end{picture}%
\setlength{\unitlength}{3947sp}%
\begingroup\makeatletter\ifx\SetFigFont\undefined%
\gdef\SetFigFont#1#2#3#4#5{%
  \reset@font\fontsize{#1}{#2pt}%
  \fontfamily{#3}\fontseries{#4}\fontshape{#5}%
  \selectfont}%
\fi\endgroup%
\begin{picture}(7349,3720)(995,-3850)
\end{picture}%
\caption{The metric obtained by applying to the standard version of $\bar{g}$ depicted in Fig. \ref{Solidmetric6next}, two iterations of the isotopy from Lemma \ref{relativemixcorners}} 
\label{Solidmetric7}
\end{figure}
\end{proof}

We close this section with the following observation. Let $f:X\times I\rightarrow I$ be the function described in the statement of Theorem \ref{relconcisodoublesurgery} above. Recall, that near each of the critical points $w$ and $z$, there are neighbourhoods, respectively diffeomorphic to $D^{p+1}\times D^{q+1}$ and $D^{p+2}\times D^{q}$, on which there are actions of $O(p+1)\times O(q+1)$ and $O(p+2)\times O(q)$. In particular, a choice of Morse coordinates near one of these critical points corresponds to a particular element of the respective orthogonal group. Although we explicitly choose a set of Morse coordinates for each of these neighbourhoods during the construction of the metric $\bar{g}$ above, Lemma \ref{GLequiv} shows that, in fact, all of the relevant data is contained in the splitting of the tangent space into positive and negative eigenspaces of the Hessian. In particular, the metric produced by the construction is independent of the particular choice of Morse coordinates and is uniquely determined by the splitting. In other words, the Gromov-Lawson construction is equivariant with respect to these orthogonal actions.

Now, let $D^{p+1}\times D^{q}$ be identified with the subset of these neighbourhoods which are perpendicular to the trajectory arc connecting $w$ and $z$. There is a corresponding copy of $O(p+1)\times O(q)\times SO(1)$ common to both groups, which acts accordingly on this subset. 

\begin{Lemma}\label{genGLequiv}
The isotopy constructed in Theorem  \ref{relconcisodoublesurgery} is equivariant with respect to the action action of $O(p+1)\times O(q)\times SO(1)$.
\end{Lemma}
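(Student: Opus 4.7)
The plan is to trace through each stage of the isotopy constructed in the proof of Theorem \ref{relconcisodoublesurgery} and verify equivariance with respect to $O(p+1)\times O(q)\times SO(1)$ step by step. The subgroup embeds naturally into $O(p+1)\times O(q+1)$ (acting on Morse coordinates at $w$) via the standard inclusion $O(q)\hookrightarrow O(q+1)$, and into $O(p+2)\times O(q)$ (acting on Morse coordinates at $z$) via $O(p+1)\hookrightarrow O(p+2)$; the $SO(1)$ factor is trivial and merely records orientation along the trajectory arc joining $w$ and $z$. Since the restriction of an equivariant map to a subgroup remains equivariant, it suffices to check equivariance under the relevant larger group at each stage.

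First I would handle the initial Gromov-Lawson cobordism $\bar g$ itself. Lemma \ref{GLequiv} gives $O(p+1)\times O(q+1)$-equivariance near $w$, and an identical argument, applied at the index-$(p+2)$ critical point $z$, gives $O(p+2)\times O(q)$-equivariance there. Restriction to the common subgroup $O(p+1)\times O(q)\times SO(1)$ completes this step and shows the starting metric of the isotopy is equivariant.

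Next I would verify equivariance for each of the mixed-torpedo and torpedo-with-corners isotopies invoked in the proof of Theorem \ref{relconcisodoublesurgery}, namely Lemmas \ref{reltorplemma}, \ref{relcornerlemma}, \ref{relmixtorplemma}, \ref{mixcorners}, and \ref{relativemixcorners}. Each of these is constructed by homotoping warping and cut-off functions that depend only on radial variables on the disk factors carrying the orthogonal action; for instance, the deformation in Lemma \ref{reltorplemma} has the explicit form $\alpha(r,t)=(1-\mu(t))+\mu(t)h(r)$ with the sphere-factor warping $F=F(r)$. Since the orthogonal action rotates these disks rigidly and fixes their radial coordinate, the doubly-warped product structure, and hence equivariance, is preserved at every stage of the homotopy.

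Finally, the handle-attachment and smoothing steps imported from Theorem 0.4 and Lemma 1.3 of Part One modify the metric as a warped product along the direction perpendicular to a boundary slice, with warping depending only on that slice coordinate, on which the relevant orthogonal action is trivial. The main obstacle is simply the bookkeeping: one must confirm at each step of the proof of Theorem \ref{relconcisodoublesurgery} that the bump functions and smoothing cutoffs can be chosen to depend only on quantities invariant under $O(p+1)\times O(q)\times SO(1)$. Because every ingredient of the construction is a warped product whose warping functions depend only on radial disk coordinates or the normal direction to a boundary slice, this verification reduces at each stage to an essentially verbatim repetition of the argument given in Lemma \ref{GLequiv}, with no new geometric difficulty beyond the length of the case analysis.
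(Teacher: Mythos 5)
Your proposal is correct and follows essentially the same route as the paper: the published proof likewise observes that every stage of the isotopy either changes the metric only in the trajectory-arc direction (on which the orthogonal action is trivial) or respects the orthogonal action by the same reasoning as in Lemma \ref{GLequiv}. You have merely spelled out in more detail what the paper's proof asserts in one sentence, including the useful observation that $O(p+1)\times O(q)\times SO(1)$ is the common subgroup of the actions at $w$ and $z$.
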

\begin{proof}
This follows from the fact that at each stage in the isotopy, the changes to the metric are either entirely vertical (in the direction of the trajectory arc) or, by Lemma \ref{GLequiv}, respect the action of $O(p+1)\times O(q)\times SO(1)$. 
\end{proof}

\section{Generalised Morse Functions}

One drawback to working exclusively with Morse functions is that for any continuous family of Morse functions, the number of critical points of any index cannot vary over the family. In other words, it is not possible for two Morse functions in the family to have differing numbers of critical points of the same index. There is, however, a very natural way to vary the numbers of critical points. This involves weakening the Morse singularity condition to allow for a certain degenerate critical point  called a {\em birth-death singularity}. Roughly speaking, a birth-death singularity allows for the cancellation of two consecutively indexed Morse singularities, of the type discussed earlier. A {\em generalised Morse function} is one whose singular set contains only Morse and birth-death singularities. Thus, by working with families of generalised Morse functions it is possible to have varying numbers of critical points. 

The long term goal of this work is to generalise the ``twisted family" construction from \cite{BHSW} 
(which we discussed earlier) to work for bundles of fibrewise generalised Morse functions. Before discussing this further, it is worth reviewing some basic singularity theory. 


\subsection{Families of Generalised Morse Functions}\label{genmorsefamily}
Let $M$ be a smooth manifold of dimension $n$ and $f:M\rightarrow \mathbb{R}$ a smooth function. The singular set of $f$ is the set $\Sigma f=\{w\in M:df_w=0\}$ and a point $w\in \Sigma f$ is said to be a non-degenerate singularity if $\det{d^{2}f_w}\neq 0$ and a degenerate singularity otherwise. Non-degenerate singularities are of course just the Morse singularities discussed earlier. This is proved in a lemma of Morse; see Lemma 2.2 of \cite{MilnorMorse}. Degenerate singularities on the other hand, may be much more complicated. We will restrict our attention to one type of degenerate singularity, the birth-death singularity. A critical point $w\in \Sigma f$ is said to be {\em birth-death of index {\rm $s+\frac{1}{2}$}} if, near $w$, $f$ is locally equivalent to the map
\begin{equation*}
\begin{split}
\mathbb{R}\times\mathbb{R}^{n-1}&\longrightarrow \mathbb{R}\\
(z, x)&\longmapsto z^{3}-\sum_{i=1}^{s}{x_i}^{2}+\sum_{i=s+1}^{n-1}{x_i}^{2}. 
\end{split}
\end{equation*}
The assignment of a non-integer index to $w$ conveys the fact that at a birth-death critical point, regular Morse critical points of index $s$ and $s+1$ may cancel. 
\begin{Definition}{\rm
The smooth function $f:M\rightarrow \mathbb{R}$ is said to be a {\em generalised Morse function} if all of its degenerate singularities are of birth-death type. }
\end{Definition}
As usual, we will require some admissibility conditions on the indices of the critical points. This motivates the following definition.

\begin{Definition} 
{\rm A generalised Morse function $f:W\rightarrow I$ is said to be a {\em admissible} if all of its Morse and birth-death singularities have index $\leq n-2$.}
\end{Definition}




In discussing families of generalised Morse functions, we will restrict to the case considered in section \ref{morsefamily}. In particular, let $\pi: E^{n+k+1}\rightarrow B^{k}$ be the smooth submersion with fibre $W=~\{W^{n+1};X_0,X_1\}$, as before, and let $F:E\rightarrow B\times I$ be a smooth map satisfying the conditions (i), (ii) and (iii) of Definition \ref{admissiblemap}. The singular set $\Sigma F$ is the set $\{w\in E: \text{rank } dF_{w}<k+1\}$. Recall, a point $w\in\Sigma F$ is called a {\em fold type singularity of index {\rm $s$}} if, near $w$, the map $F$ is locally equivalent to
\begin{equation*}
\begin{split}
\mathbb{R}^{k}\times\mathbb{R}^{n+1}&\longrightarrow\mathbb{R}^{k}\times\mathbb{R}\\
(y,x)&\longmapsto\left(y, -\sum_{i=1}^{s}{x_i}^{2}+\sum_{i=s+1}^{n+1}{x_i}^2\right).
\end{split}
\end{equation*}
Furthermore, a {\em fold} of $F$ is a connected component of $\Sigma F$ which contains only fold-type singularities.

In the case when $B$ is a point, a fold singularity is just a Morse singularity of index $s$ and is thus non-degenerate, i.e. $\det{d^{2}F_w}\neq 0$. More generally, this is a degenerate singularity with $\dim (\ker {d^{2}F_w})=k$. In this case, we may regard $F$, locally, as a constant $k$-parameter family of Morse functions 
\begin{equation*}
\begin{split}
\mathbb{R}^{n+1}&\longrightarrow \mathbb{R}\\
x&\longmapsto -\sum_{i=1}^{s}{x_i}^{2}+\sum_{i=s+1}^{n+1}{x_i}^2,
\end{split}
\end{equation*}
over $\mathbb{R}^{k}$.  

In section \ref{morsefamily}, we restricted to the case when $F$ had only fold singularities. In other words the restriction of $F$ to fibres was a Morse function. In this section, we wish to weaken this condition so that the restriction of $F$ on fibres is a generalised Morse and so may have birth-death singularities. Thus, as well as folds, $F$ is allowed to have what are called cusps. 

\begin{Definition} {\rm Let $k\geq 1$. A point $w\in\Sigma F$ is called a {\em cusp type singularity of index {\rm $s+\frac{1}{2}$}} if, near $w$, the map $F$ is locally equivalent to 
\begin{equation*}
\begin{split}
\mathbb{R}^{k}\times\mathbb{R}\times\mathbb{R}^{n}&\longrightarrow\mathbb{R}^{k}\times\mathbb{R}\\
(y,z,x)&\longmapsto\left(y, z^{3}+3y_1z-\sum_{i=1}^{s}{x_i}^{2}+\sum_{i=s+1}^{n-k}{x_i}^2\right).
\end{split}
\end{equation*}
}
\end{Definition}

\begin{figure}[htbp]
\vspace{0mm}
\hspace{1cm}
\begin{picture}(0,0)%
\includegraphics{Pictures/cuspimage.eps}%
\end{picture}%
\setlength{\unitlength}{3947sp}%
\begingroup\makeatletter\ifx\SetFigFont\undefined%
\gdef\SetFigFont#1#2#3#4#5{%
  \reset@font\fontsize{#1}{#2pt}%
  \fontfamily{#3}\fontseries{#4}\fontshape{#5}%
  \selectfont}%
\fi\endgroup%
\begin{picture}(5880,1698)(1573,-3054)
\put(3263,-2663){\makebox(0,0)[lb]{\smash{{\SetFigFont{10}{12}{\rmdefault}{\mddefault}{\updefault}{\color[rgb]{0,0,0}$x$}%
}}}}
\put(1696,-2113){\makebox(0,0)[lb]{\smash{{\SetFigFont{10}{12}{\rmdefault}{\mddefault}{\updefault}{\color[rgb]{0,0,0}$s$}%
}}}}
\put(1488,-2988){\makebox(0,0)[lb]{\smash{{\SetFigFont{10}{12}{\rmdefault}{\mddefault}{\updefault}{\color[rgb]{0,0,0}$s+1$}%
}}}}
\put(3201,-1950){\makebox(0,0)[lb]{\smash{{\SetFigFont{10}{12}{\rmdefault}{\mddefault}{\updefault}{\color[rgb]{0,0,0}$y$}%
}}}}
\put(2451,-1538){\makebox(0,0)[lb]{\smash{{\SetFigFont{10}{12}{\rmdefault}{\mddefault}{\updefault}{\color[rgb]{0,0,0}$z$}%
}}}}
\put(5301,-1680){\makebox(0,0)[lb]{\smash{{\SetFigFont{10}{12}{\rmdefault}{\mddefault}{\updefault}{\color[rgb]{0,0,0}$s+1$}%
}}}}
\put(5588,-2780){\makebox(0,0)[lb]{\smash{{\SetFigFont{10}{12}{\rmdefault}{\mddefault}{\updefault}{\color[rgb]{0,0,0}$s$}%
}}}}
\put(7438,-2230){\makebox(0,0)[lb]{\smash{{\SetFigFont{10}{12}{\rmdefault}{\mddefault}{\updefault}{\color[rgb]{0,0,0}$y$}%
}}}}
\end{picture}
\caption{A cusp singularity and its image where $k=1$.}
\label{cusp}
\end{figure}

As before, we may regard $F$ as a $k$-parameter family of functions, although unlike the fold case this family is not constant. In the above coordinates, the singular set of $F$ is
\begin{equation*}
\Sigma F=\{(y,z,x): z^{2}+y_1=0, x=0\}.
\end{equation*}
Thus, when $y_1>0$, the function $F$ is locally a $k$-parameter family of Morse functions with no critical points, parametrised by $y\in(0,\infty)\times\mathbb{R}^{k-2}$. At $y_1=0$, the function $F$ is a $(k-1)$-parameter family of generalised Morse functions, each with exactly one birth-death critical point occurring at $(z=0, x=0)$. When $y_1<0$, $F$ is a $k$-parameter family of Morse functions each with exactly two critical points, parametrised by $y\in (-\infty, 0)\times \mathbb{R}^{k-1}$. Each Morse function in this family has a critical point of index $s$ at $(z=\sqrt{-y_1},x=0)$ and a critical point of index $s+1$ at $(z=-\sqrt{-y_1}, x=0)$.  Thus, as $y_1\rightarrow 0^{-}$, these pairs of Morse critical points converge and cancel as a $(k-1)$-parameter family of birth-death singularities. 

The case when $k=1$ is illustrated in Figures \ref{cusp} and \ref{genmorseunfolding}. This is best thought of as a $1$-parameter family of functions 
\begin{equation*}
\begin{split}
q_y:\mathbb{R}\times\mathbb{R}^{n-2}&\longrightarrow\mathbb{R}\\
(z, x)&\longmapsto z^{3}+3yz-\sum_{i=1}^{s}{x_i}^{2}+\sum_{i=s+1}^{n-2}{x_i}^2,
\end{split}
\end{equation*}
parametrised by $y\in\mathbb{R}$. In these coordinates, the singular set $\Sigma F$ is the curve $z^{2}+y=0$ on the plane $x=0$, shown in Fig. \ref{cusp}. This particular example is known as the {\em standard unfolding of a birth-death singularity}. We close this section by observing the topological effects of the unfolding. These are illustrated in Fig. \ref{genmorseunfolding} by selected level sets $q_y=q_y(\sqrt{c},0)-\epsilon$, $q_y=0$ and $q_y=q_y(-\sqrt{c},0)+\epsilon$ for $y=-c, 0$ and $c$, where $c$ and $\epsilon$ are both positive constants. The critical points of index $s$ and $s+1$ occur at $z=\sqrt{c}$ and $z=-\sqrt{c}$ respectively for the function $q_{-c}$.
The birth-death singularity occurs on the level set $q_0=0$, shown in the centre of this figure, while the function $q_c$ has no critical points.

\begin{figure}[htbp]
\vspace{20mm}
\begin{picture}(0,0)%
\includegraphics[scale=0.8]{Pictures/genmorseunfolding.eps}%
\end{picture}%
\setlength{\unitlength}{3947sp}%
\begingroup\makeatletter\ifx\SetFigFont\undefined%
\gdef\SetFigFont#1#2#3#4#5{%
  \reset@font\fontsize{#1}{#2pt}%
  \fontfamily{#3}\fontseries{#4}\fontshape{#5}%
  \selectfont}%
\fi\endgroup%
\begin{picture}(8438,8638)(236,-10627)
\put(1864,-2000){\makebox(0,0)[lb]{\smash{{\SetFigFont{10}{12}{\rmdefault}{\mddefault}{\updefault}{\color[rgb]{0,0,0}$y=-c$}%
}}}}
\put(4076,-2000){\makebox(0,0)[lb]{\smash{{\SetFigFont{10}{12}{\rmdefault}{\mddefault}{\updefault}{\color[rgb]{0,0,0}$y=0$}%
}}}}
\put(6176,-2000){\makebox(0,0)[lb]{\smash{{\SetFigFont{10}{12}{\rmdefault}{\mddefault}{\updefault}{\color[rgb]{0,0,0}$y=c$}%
}}}}
\put(251,-2574){\makebox(0,0)[lb]{\smash{{\SetFigFont{10}{12}{\rmdefault}{\mddefault}{\updefault}{\color[rgb]{0,0,0}$q_y=q_y(-\sqrt{c},0)+\epsilon$}%
}}}}
\put(264,-5261){\makebox(0,0)[lb]{\smash{{\SetFigFont{10}{12}{\rmdefault}{\mddefault}{\updefault}{\color[rgb]{0,0,0}$q_y=0$}%
}}}}
\put(276,-7674){\makebox(0,0)[lb]{\smash{{\SetFigFont{10}{12}{\rmdefault}{\mddefault}{\updefault}{\color[rgb]{0,0,0}$q_y=q_y(\sqrt{c},0)-\epsilon$}%
}}}}
\end{picture}%
\caption{Selected level sets showing the unfolding of a birth-death singularity}
\label{genmorseunfolding}
\end{figure}

Finally, we define the generalised analogue of the map $F$, from section \ref{morsefamily}.

\begin{Definition}\label{admgenmorsefam}{\rm
Let $\pi:E^{n+1+k}\rightarrow B^{k}$ be the smooth bundle described above, with fibre $W^{n+1}$, a smooth compact cobordism of closed manifolds, $\p W= X_0\sqcup X_1$. Let $F:E\rightarrow B\times I$ be a smooth map. The map $F$ is said to be an {\em admissible family of generalised Morse functions} provided it satisfies the following conditions.
\begin{enumerate}
\item[{\bf (i)}] For all $w\in E$, $\pi(w)=p_1\circ F(w)$.
\item[{\bf (ii)}] The pre-images $F^{-1}(B\times\{0\})$ and $F^{-1}(B\times\{1\})$ are the submanifolds $E_0$ and $E_1$ respectively.
\item[{\bf (iii)}] The singular set $\Sigma F$ is contained entirely in $E\setminus(E_0\sqcup E_1)$.
\item[{\bf (iv)}] For each $y\in B$, the restriction $f_y=F|_{W_y}$ is an admissible generalised Morse function, i.e. one whose Morse and birth-death critical points have index $\leq n-2$.
\end{enumerate}}
\end{Definition}

Once again we will require a background metric. Recall, that for an admissible Morse function $f:W\rightarrow I$, a Riemannian metric $\m$ on $W$ was said to be compatible if at each critical point of $f$, the corresponding eigenspaces of the Hessian of $f$ were orthogonal with respect to $\m$. Now suppose $f$ is a generalised Morse function, and $w\in W$ is a birth-death singularity of $f$. In this case the tangent space $T_wW$ splits into three subspaces, a pair of positive and negative eigenspaces $T_wW^{+}$ and $T_wW^{-}$, as well as a one-dimensional kernel~$T_wW^{0}$,
\begin{equation*}\label{genmorsplit}
T_{w}W=T_{w}W^{0}\oplus T_{w}W^{-}\oplus T_wW^{+}.
\end{equation*}
In this case, the notion of compatibility generalises as follows.
\begin{Definition}\label{gencompat} {\rm Let $f:W\rightarrow I$ be a generalised Morse function. A Riemannian metric $\m$ on $W$ is {\em compatible} with $f$ if the following conditions are satisfied.
\begin{enumerate}
\item{} For every Morse critical point $w$ of $f$, $\m$ is compatible in the original sense.
\item{} At each birth-death critical point $w\in W$, the splitting in \ref{genmorsplit} is orthogonal with respect to $\m$.
\item{} $\m|_{T_wW^{-}}=d^{2}f|_{T_wW^{-}}$, $\m|_{T_wW^{+}}=d^{2}f|_{T_wW^{+}}$ and $\m|_{T_wW^{0}}=dt^{2}$.
\end{enumerate}}
\end{Definition}

Now, let $F$ be a family of admissible generalised Morse functions. That is, $F$ is a map from $E$ to $B\times I$ satisfying the conditions of Definition \ref{admgenmorsefam} and containing only fold and cusp singularities. Let $\m|_{E}$ and $\m|_B$ be a pair of Riemannian metrics on $E$ and $B$ which give rise to a Riemannian submersion $\pi:(E, \m_E)\rightarrow (B, \m_{B})$. We now make the following definitions.
\begin{Definition}
{\rm The metric $\m_E$ is {\em compatible} with $F$ provided the restriction $\m_{y}$ of the metric $\m_E$ to the fibre $W_{y}$, for each $y\in B$, is compatible in the sense of Definition \ref{gencompat}.}
\end{Definition}

\begin{Definition}
{\rm A vector field $V_E$ is said to be {\em gradient-like} on $F$ and $\m_E$ provided the following hold.
\begin{enumerate}
\item{} For each $y\in B$, the restriction of $V_E$ to the fibre $W_y$, denoted $V_w$ satisfies $df_y(V_w)>0$ away from critical points. Recall that $f_y=F|_{W_y}$. 
\item{} Near critical points of $F$, $V_E$ agrees with $\grad F$.
\end{enumerate}}
\end{Definition}
 
\noindent As before we will consider triples $(F, \m_{E}, V_E)$, consisting of an admissible family of generalised Morse functions $F$, a compatible Riemannian metric $\m_E$ (which actually denotes a further Riemannian submersion structure) and a gradient-like vector field $V_E$. Notationally, this is extremely cumbersome and so once again we abbreviate such a triple by $F$.

Finally, we must say some words about the restriction of the tangent bundle $TE$ to singular points of $F$. The singular set $\Sigma=\Sigma F$ decomposes as
\begin{equation*}
\Sigma = \Sigma^{0}\cup\Sigma^{1},
\end{equation*}
where $\Sigma^{0}$ denotes the fold, and $\Sigma^{1}$ the cusp singularities of $F$. 
Recall that the tangent bundle $TE$ contains a vertical subbundle $\Ver$ with fibre at each $w\in W$, $\Ver_w=\Ker{d\pi_w}$. From the decomposition described in \ref{genmorsplit} above, we obtain a corresponding decomposition of $\Ver$ on $\Sigma^{1}F$:
\begin{equation*}
\Ver|_{\Sigma^{1}}=\Ver^{0}\oplus\Ver^{-}\oplus\Ver^{+}.
\end{equation*}
The Riemannian metric $\m|_E$ reduces the structure group to $SO(1)\times O(p+1)\times O(q)$, where $p+\frac{3}{2}$ is the index of critical points in $\Sigma^{1}$. Shortly, we will prove an analogue of Theorem \ref{main}, for the case of an admissible family of generalised Morse functions. Before then, it is worth saying a few words about the space of generalised Morse functions.

\subsection{The Space of Generalised Morse Functions}

Let $\{W^{n+1}; X_0, X_1\}$ denote a smooth compact cobordism of the type discussed earlier. Recall, $\F=\F(W)$ denotes the  space of smooth functions $W\rightarrow I$ with $f^{-1}(0)=X_0$, $f^{-1}(1)=X_1$ and with $\Sigma f$ contained entirely in the interior of $W$. This is a subspace of $C^{\infty}(W, I)$ and inherits the subspace topology.  The subspace of $\F$ consisting of Morse functions, is denoted $\Mor$. This space is not path connected as functions lying in the same path component of $\Mor$ must have the same number of critical points of the same index. There is, as we discussed earlier, a natural way to connect up path components of this space.  Let $\GMor=\GMor (W)$ denote the subspace of $\F$ which consists of all generalised Morse functions. Recall that the singular set of a generalised Morse function consists of both Morse and birth-death singularities.  Summarising, we have the following inclusions:
\begin{equation*}
\Mor\subset \GMor \subset \F.
\end{equation*}

\begin{Remark}
As before, we observe that for a given generalised Morse function $f$, the space of compatible metrics is a convex space and so the space of compatible triples $(f,\m,V)$, where $f$ is an admissible generalised Morse function, $\m$ is a compatible metric and $V$ is a gradient-like vector field is homotopy equivalent to the space $\GMor$. 
\end{Remark}

From the work of K.Igusa, we obtain the following lemma.  
\begin{Lemma}\label{genmorseconnect}
The space $\GMor$ is path-connected. In particular, any two Morse functions on $W$ may be connected by a path through generalised Morse functions on $W$.
\end{Lemma}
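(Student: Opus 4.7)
The plan is to deduce this from Igusa's stratification of the space $\F = \F(W)$ by singularity type, developed in \cite{I1}, \cite{I2}, \cite{I3}. The key input is that $\GMor$ is open and dense in $\F$, with complement $\F \setminus \GMor$ a union of strata of codimension at least $2$: Morse singularities are generic (codimension $0$ in $\F$), birth-death singularities appear generically in $1$-parameter families (codimension $1$), while any more degenerate singularity, such as a swallowtail, requires at least two independent parameters to unfold.

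First I would connect any pair $f_0, f_1 \in \GMor$ by a continuous path in $\F$. The straight-line homotopy $f_t = (1-t)f_0 + tf_1$ preserves the conditions $f_t^{-1}(0) = X_0$ and $f_t^{-1}(1) = X_1$, and, because $f_0$ and $f_1$ may be assumed standard projections in a fixed collar neighbourhood of $\p W$, so is each $f_t$; hence $f_t$ has no critical points near $\p W$ and therefore $f_t \in \F$ for all $t$. This yields a continuous path in $\F$, which a priori may meet $\F \setminus \GMor$.

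The central step is to perturb this path, rel endpoints, into $\GMor$. Reinterpreting a path $[0,1] \to \F$ as a smooth map $H \colon W \times [0,1] \to I \times [0,1]$ covering the identity on the second factor, the condition that the path lies in $\GMor$ translates to a transversality condition on $H$ with respect to each singularity stratum more degenerate than birth-death. Since every such stratum has codimension $\geq 2$ in $\F$, a generic small perturbation of $H$ (fixing the slices at $t=0$ and $t=1$) avoids them entirely, by Thom's multijet transversality theorem applied to the natural stratification of the jet bundle $J^3(W,I)$. The resulting perturbed path lies in $\GMor$ throughout.

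The main obstacle is the technical setup: identifying the relevant jet-space strata, verifying their codimensions, and applying transversality in the appropriate infinite-dimensional setting so that the perturbation remains in $\F$ and respects the boundary conditions. Fortunately, this is precisely the content of Igusa's framework; the statement needed here is essentially a special case of the connectivity results for spaces of generalised Morse functions in \cite{I3}. Given this, the second assertion is immediate: since $\Mor \subset \GMor$, any two Morse functions can be joined by a path in $\GMor$ by the argument above.
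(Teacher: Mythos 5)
Your proof is correct in spirit but takes a genuinely different route from the paper's. The paper's proof is a one-line deduction from Igusa's theorem in \cite{I1}: there is an $(n+1)$-connected map $\GMor \to \Omega^{\infty}\Sigma^{\infty}(BO\wedge W_{+})$, and since $BO\wedge W_{+}$ is path-connected, so is the target, hence so is $\GMor$. You instead argue via classical jet transversality in the style of Cerf/Thom: connect $f_0$ and $f_1$ by a straight-line homotopy in $\F$ (arranged to respect boundary conditions by a preliminary normalisation near $\p W$), then perturb rel endpoints so the associated $1$-parameter family $H\colon W\times I\to I$ is transverse to the singularity stratification in $J^3$, at which point the codimension count (swallowtails and worse occur in codimension $\geq 2$) kills all non-generalised-Morse singularities along the path. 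This is essentially the $\pi_0$-level content underlying Igusa's connectivity theorem, so the two arguments are not independent, but yours is more elementary and geometric: rather than invoking the full strength of the stable homotopy type, you only use the codimension estimate. The trade-off is that the paper's proof is immediate once the cited result is accepted, whereas making your transversality step rigorous in the infinite-dimensional function space (fixing endpoints, preserving membership in $\F$, handling the countable stratification of $J^3$) requires exactly the technical apparatus of Igusa's framework --- a point you are honest about at the end. One small remark: the openness of $\GMor$ in $\F$ (which you assert along with density) does hold, because the set of functions having a worse-than-birth-death singularity is closed, but this is a fact one should pause over; density of $\Mor$ alone would already suffice for your argument.
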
 
 \begin{proof}
In \cite{I1}, the author constructs an $(n+1)$-connected map
\begin{equation*}
\GMor\longrightarrow \Omega^{\infty}\Sigma^{\infty}(BO\wedge W_{+}),
\end{equation*}
where $W_{+}=W\sqcup pt$. As the right-hand side is path-connected, the result follows.
 \end{proof}
 
 In the previous section, we defined what is meant by a family of generalised Morse functions (albeit in the admissible case). For technical reasons, it will be necessary that we are working with families whose birth-death points unfold in the way described in section \ref{genmorsefamily}. In other words, we are interested in families whose singular sets consist of only folds and cusps. 
 
\begin{Definition}
{\rm A family $F$, of generalised Morse functions, is said to be {\em moderate} provided that the singular set $\Sigma F$, consists of only fold and cusp singularities. }
\end{Definition}

We are almost in a position to state the main results. Before doing this, we will briefly discuss some work of Hatcher on the connectivity of some important subspaces of the space of generalised Morse functions. 
  

\subsection{Hatcher's 2-Index Theorem}

Given the importance of critical points of admissible index in this work, it is worth defining the following spaces. Let $\GMor_{i,j}$ denote the subspace of $\GMor$ consisting of all generalised Morse functions with only critical points of index between $i$ and $j$ inclusively. Of special interest to us is the space $\GMor^{adm}=\GMor_{0,n-2}$, the space of admissible generalised Morse functions on $W$. It will be important for us to be able to connect up an arbitrary pair of admissible Morse functions with a path through admissible generalised Morse functions. To do this, we will need the following corollary of Hatcher's 2-index Theorem; see Theorem 1.1, Chapter VI, Section 1 of \cite{I3}. 

\begin{Theorem}\label{2index}{\rm (Corollary 1.4, Chapter VI, \cite{I3})}
Under the following conditions the inclusion map $\GMor_{i,j-1}\longrightarrow \GMor_{i,j}$ is $k$-connected.
\begin{enumerate}
\item[(a)] $(W,X_1)$ is $(n-j+1)$-connected.
\item[(b)] $j\geq i+2$.
\item[(c)] $n-j+1\leq n-k-1-\min(j-1,k-1)$.
\item[(d)] $n-j+1\leq n-k-3$.
\end{enumerate}
\end{Theorem}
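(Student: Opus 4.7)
The plan is to prove the $k$-connectivity by the parametrized handle-cancellation method of Cerf, Hatcher and Igusa: view a family in $\GMor_{i,j}$ as a generalised Morse function on a bundle, and eliminate the top fibrewise index by successive birth-death moves carried out smoothly over the parameter disk.

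First I would recast the $k$-connectivity assertion as a relative extension problem. Given a smooth map $\phi:(D^{k+1},S^k)\to(\GMor_{i,j},\GMor_{i,j-1})$, we must produce a homotopy rel $S^k$ landing entirely in $\GMor_{i,j-1}$. In the language of Section 3.2, $\phi$ is the data of a moderate family $F:W\times D^{k+1}\to D^{k+1}\times I$ of generalised Morse functions whose fibrewise singular indices lie in $[i,j]$ and with no index-$j$ critical points on the boundary fibres. After a generic perturbation the index-$j$ fold locus $\Sigma^j\subset W\times D^{k+1}$ is a smooth submanifold disjoint from the boundary, and the task reduces to removing $\Sigma^j$ by introducing cusps that propagate the family to the empty case.

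Next I would carry out a parametrized Whitney trick to pair each component of $\Sigma^j$ with a companion index-$(j-1)$ fold. For a critical point $w$ of $f_y$ of index $j$, the attaching $(j-1)$-sphere of its descending disk lies on the level set just below the critical value, and condition (b), $j\geq i+2$, supplies an index-$(j-1)$ critical point in the fibre whose belt $(n+1-j)$-sphere sits in the same level set. Condition (a), that $(W,X_1)$ is $(n-j+1)$-connected, then allows one to homotope the attaching sphere to meet the belt sphere transversally in a single point; the resulting matching arc in $W_y$ supports a Cerf birth-death move which coalesces the two critical values into a cusp and resolves it the other way, killing both critical points.

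The essential difficulty is to execute this pairing coherently over all of $D^{k+1}$, not merely fibrewise. Conditions (c) and (d) are precisely the codimension inequalities needed for the parametrized Whitney trick: (c) guarantees that the extra $k$ parameters leave enough room to eliminate the family of double points introduced when the attaching spheres are isotoped into matching position, while (d) supplies the further codimension required for the resulting degeneracies to be nondegenerate cusps forming a clean codimension-one stratum in $D^{k+1}$. The main obstacle is therefore not the individual handle cancellation but its parametrized coherence: slides, attaching-sphere isotopies and birth-death moves must vary smoothly in $D^{k+1}$ without producing new index-$j$ critical points along the way. This is the heart of Hatcher's argument in \cite{I3}, and the dimension inequalities (c) and (d) are exactly the codimension budget that makes the parametrized cancellation go through.
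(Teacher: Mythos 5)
The paper does not actually prove this statement --- it is cited verbatim as Corollary 1.4, Chapter VI of Igusa's \emph{The stability theorem for smooth pseudoisotopies} (Hatcher's $2$-index theorem). There is therefore no proof in the paper to compare against; you are being asked in effect to reproduce one of the harder pieces of parametrized Morse theory in the literature, a task well outside what the present paper undertakes.

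Your sketch does capture the correct overall scheme --- rephrase $k$-connectivity as a lifting problem for a $(D^{k+1},S^k)$-family, realise it as a fibrewise generalised Morse function, and kill the fold locus of index $j$ by parametrized birth--death cancellations coherent over the disk. The roles you assign to conditions (a), (c), (d) are plausible and in the right spirit: (a) as the connectivity input to the Whitney trick for sliding attaching spheres, and (c), (d) as the codimension budget needed to push double points and degeneracies off a $k$-dimensional parameter family. But your treatment of condition (b) is incorrect: the hypothesis $j\geq i+2$ does not ``supply'' an index-$(j-1)$ critical point already present in the fibre. Nothing in the hypotheses guarantees a pre-existing index-$(j-1)$ critical point sitting on the right level set, and generically a fibre function in $\GMor_{i,j}$ may have an index-$j$ critical point and no index-$(j-1)$ one at all. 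The actual mechanism is the reverse: to cancel an index-$j$ point one must first perform a birth move introducing a cancelling pair of indices $(j-2,\,j-1)$, then cancel the new index-$(j-1)$ point against the unwanted index-$j$ point, leaving behind a residual index-$(j-2)$ point. The condition $j\geq i+2$ is exactly what guarantees $j-2\geq i$, so that this residual point still lies in the admissible range and the endpoint of the homotopy lands in $\GMor_{i,j-1}$. Without this creation step the argument does not start, and attributing to (b) a purely passive role of ``supplying'' a companion misreads where the hypothesis is used. Beyond that, the real content of Hatcher's proof --- the delicate combinatorics of arranging the family of handle slides, births and deaths so as to avoid mutual interference over the parameter disk, which occupies a substantial part of Igusa's Chapter VI --- is not addressed in a way that could be turned into a proof; your final paragraph acknowledges that this ``is the heart of Hatcher's argument'' but leaves it as a black box, which is appropriate for a citation but not for a proof.
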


\section{Parameterising Gromov-Lawson Cobordisms by Generalised Morse Functions}\label{genparam}
In this section, we will generalise Theorem \ref{main} to work for families of admissible generalised Morse functions. As a first step, we consider the case of families obtained as paths in the space of generalised Morse functions. As usual, $W=\{W^{n+1};X_0, X_1\}$ is a smooth compact cobordism and $\GMor=\GMor{(W)}$ is the space of generalised Morse functions on $W$.

\subsection{Applying the Construction over One-Parameter Families}
The simplest non-trivial case of a path in $\GMor$ is one which connects a Morse function with two cancelling critical points (as in Theorem \ref{concisodoublesurgery}) to a function which has no critical points. 
Let $f=(f,\m,V)$ be an be an admissible Morse triple on $W$, satisfying conditions (a), (b) and (c) of Theorem \ref{concisodoublesurgery}. Recall these conditions are as follows.
\begin{enumerate}
\item[{\bf (a)}] The function $f$ has exactly $2$ critical points $w$ and $z$ and $0<f(w)<f(z)<1$. 
\item[{\bf (b)}] The points $w$ and $z$ have Morse index $p+1$ and $p+2$ respectively. 
\item[{\bf (c)}] For each $t\in(f(w), f(z))$, the trajectory spheres $S_{t,+}^{q}(w)$ and $S_{t,-}^{p+1}(z)$ on the level set $f^{-1}(t)$, respectively emerging from the critical point $w$ and converging toward the critical point $z$, intersect transversely as a single point.
\end{enumerate}
Let $K_{-}^{p+1}(w)\subset f^{-1}([0,f(w)])$ denote the inward trajectory disk of $w$. This disk is bounded by a trajectory sphere which we denote $S_{-}^{p}\subset X_0$. Let $t\in(f(w), f(z))$. Emerging from $w$ is an outward trajectory disk $K_{t,+}^{q+1}(w)\subset f^{-1}([f(w),t])$ which is bounded by an outward trajectory sphere $S_{t,+}^{q}\subset f^{-1}(t)$. Similarly, associated to $z$ is an inward trajectory disk $K_{t,-}^{p+2}(w)\subset f^{-1}([t, f(z)])$ bounded by an inward trajectory sphere $S_{t,-}^{p+1}\subset{f^{-1}(t)}$ and an outward trajectory disk $K_{+}^{q}(z)\subset f^{-1}([f(z),1])$ bounded by an outward trajectory sphere $S_{+}^{q-1}\subset X_1$. We define a smooth {\em trajectory arc} $\gamma:[f(w),f(z)]\rightarrow W$ by the formula
\begin{equation*}\label{pwf(t)}
\begin{array}{c}
{\gamma}(t) =
\begin{cases}
w, & \text{when $t=f(w)$}\\
S_{t,+}^{q}\cap S_{t,-}^{p+1}, & \text{when $t\in(f(w), f(z))$}\\
z, & \text{when $t=f(z)$}.
\end{cases}
\end{array}
\end{equation*} 
Condition (c) means that for each $t\in (f(w), f(z))$, the intersection $S_{t,+}^{q}\cap S_{t,-}^{p+1}$ is a single point and so this formula makes sense. 

The embedded sphere $S_{-}^{p}$ in $X\times \{0\}$, bounds a particular embedded disk which we denote $D_{-}^{p+1}$. This disk is determined as follows. Let $t\in(f(w), f(z))$. Each point in $S_{t,-}^{p+1}\setminus\gamma(t)\subset f^{-1}(t)$ is the end point of an integral curve of $V$ beginning in $X_0$. Thus, applying in reverse the trajectory flow generated by $V$, to $S_{t,-}^{p+1}\setminus\gamma(t)$, specifies a diffeomorphism
\begin{equation*}
S_{t,-}^{p+1}\setminus\gamma(t)\longrightarrow D_-^{p+1}\subset X_0.
\end{equation*}
The boundary of this disk is of course the inward trajectory sphere $S_{-}^{p}$ which collapses to a point at $w$. 

Let $N_w$ and $N_z$ denote respective tubular neighbourhoods in $X_0$ of the sphere $S_{-}^{p}$ and the disk $D_{-}^{p+1}$ with respect to the background metric $\m$. We will assume that $N_w\subset N_z$. Note that $N_z$ is topologically a disk and the radii of these neighbourhoods can be chosen to be arbitrarily small. Each point $x\in X_0\setminus N_z$ is the starting point of a maximal integral curve $\psi_x:[0,1]\rightarrow W$ of $V$, which ends in $X_1$. As before, this gives rise to an embedding $\psi:(X_0\setminus N_z)\times I\rightarrow W$. 
We denote by $U$, the complement in $W$ of the image of this embedding. The region $U$ contains both critical points $w$ and $z$, the trajectory disks $K_{-}^{p+1}(w)$ and $K_{+}^{q}(z)$ as well as the trajectory arc $\gamma$; see Fig. \ref{neigh}. It is immediately clear that $U$ is diffeomorphic to $N_z\times I$, however, the gradient-like vector field $V$ has zeros in $U$ and so we cannot use its trajectory to construct an explicit diffeomorphism here in the way we can outside of $U$.   
\begin{figure}[htbp]
\hspace{10mm}
\begin{picture}(0,0)%
\includegraphics{Pictures/diskneighbourhood.eps}%
\end{picture}%
\setlength{\unitlength}{3947sp}%
\begingroup\makeatletter\ifx\SetFigFont\undefined%
\gdef\SetFigFont#1#2#3#4#5{%
  \reset@font\fontsize{#1}{#2pt}%
  \fontfamily{#3}\fontseries{#4}\fontshape{#5}%
  \selectfont}%
\fi\endgroup%
\begin{picture}(6002,3408)(649,-3277)
\put(876,-2061){\makebox(0,0)[lb]{\smash{{\SetFigFont{10}{12}{\rmdefault}{\mddefault}{\updefault}{\color[rgb]{0,0,0}$N_z$}%
}}}}
\put(5114,-1474){\makebox(0,0)[lb]{\smash{{\SetFigFont{10}{12}{\rmdefault}{\mddefault}{\updefault}{\color[rgb]{0,0,0}$K_{+}^{q}(z)$}%
}}}}
\put(3639,-1436){\makebox(0,0)[lb]{\smash{{\SetFigFont{10}{12}{\rmdefault}{\mddefault}{\updefault}{\color[rgb]{0,0,0}$\gamma$}%
}}}}
\put(1901,-1586){\makebox(0,0)[lb]{\smash{{\SetFigFont{10}{12}{\rmdefault}{\mddefault}{\updefault}{\color[rgb]{0,0,0}$K_{-}^{p+1}(w)$}%
}}}}
\put(3489,-2574){\makebox(0,0)[lb]{\smash{{\SetFigFont{10}{12}{\rmdefault}{\mddefault}{\updefault}{\color[rgb]{0,0,0}$t$}%
}}}}
\put(664,-3161){\makebox(0,0)[lb]{\smash{{\SetFigFont{10}{12}{\rmdefault}{\mddefault}{\updefault}{\color[rgb]{0,0,0}$X_0$}%
}}}}
\put(5801,-3211){\makebox(0,0)[lb]{\smash{{\SetFigFont{10}{12}{\rmdefault}{\mddefault}{\updefault}{\color[rgb]{0,0,0}$X_1$}%
}}}}
\put(3339,-836){\makebox(0,0)[lb]{\smash{{\SetFigFont{10}{12}{\rmdefault}{\mddefault}{\updefault}{\color[rgb]{0,0,0}$U$}%
}}}}
\end{picture}%
\caption{The neighbourhood $U$, diffeomorphic to the cylinder $N_z\times I$}
\label{neigh}
\end{figure}
It is always possible to {\em regularise} the admissible Morse triple $f$, replacing it with an admissible Morse triple $f'$ which agrees with $f$ on $W\setminus U$ and near $X_0$ and $X_1$, but which has no critical points. This is Theorem 5.4 of \cite{Smale}. The key point, which requires much work to show, is that there is a coordinate neighbourhood $U'\subset U$, containing the trajectory arc $\gamma$, on which $f|_{U'}$ takes the form
\begin{equation*}
\begin{split}
\mathbb{R}\times\mathbb{R}^{n}&\longrightarrow\mathbb{R}\\
(z, x)&\longmapsto z^{3}+3z-\sum_{i=1}^{s}{x_i}^{2}+\sum_{i=s+1}^{n-2}{x_i}^2.
\end{split}
\end{equation*}
This coordinate neighbourhood forms one slice in a neighbourhood $U'\times [-1, 1]\subset W\times [-\delta, 1]$ which is equipped with a map $F$ which takes the form
\begin{equation*}\label{coordf}
\begin{split}
\mathbb{R}\times\mathbb{R}^{n}&\longrightarrow\mathbb{R}\\
(z, x)&\longmapsto z^{3}+3yz-\sum_{i=1}^{s}{x_i}^{2}+\sum_{i=s+1}^{n-2}{x_i}^2,
\end{split}
\end{equation*}
on $U'\times [-1, 1]$, with $y\in[-1,1]$, and $f$ outside of $U\times [-1, 1]$. The map $f'$ is then the restriction $F|_{W\times \{1\}}$ while $f=F|_{W\times \{1\}}$. 
Furthermore, the family $F$ is a moderate family 
\begin{equation*}
F:W\times [-1,1]\longrightarrow [-1,1]\times I 
\end{equation*}
containing a single cusp singularity.

More generally, suppose $F$ is a moderate family defining a path through admissible generalised Morse functions which connects a pair of admissible Morse functions $f_0$ and $f_1$ and satisfies the following conditions. 
\begin{enumerate}
\item{}$f_0$ contains a pair of cancelling Morse critical points.
\item{}$f_1$ has no critical points.
\item{}$F$ contains exactly one cusp singularity.
\end{enumerate}
In other words, $F$ is a moderate family of admissible generalised Morse functions
\begin{equation*}
F:W\times I\longrightarrow I\times I
\end{equation*}
containing only one cusp singularity. Applying the construction from Theorem \ref{relconcisodoublesurgery}, allows us to prove Theorem \ref{genmorsepath} from the introduction, stated below. We will assume that $F$ comes equipped with a compatible reference metric and gradient-like vector field, although we will suppress their notation.
\vspace{3mm}

\noindent {\bf Theorem \ref{genmorsepath}.}
{\em Let $\{W; X_0, X_1\}$ be a smooth compact cobordism and let $F:W\times I\rightarrow I\times I$ be a moderate family of admissible generalised Morse functions. Suppose there is a point $y_0\in (0,1)$ so that $f_y=F|_{W\times \{y\}}$ is a Morse function for all $y\in I\setminus \{y_0\}$ and that $f_{y_0}$ contains exactly one birth-death critical point. Finally, let $g_0:I\rightarrow \Riem^{+}(X_0)$ be a family of psc-metrics on $X_0$   Then, there is a metric $\bar{\bar{g}}=\bar{\bar{g}}(F, g_0)$ on $W\times I$ which satisfies the following conditions.
\begin{enumerate}
\item{} For each $y\in [0,1]$,  the restriction of $\bar{\bar{g}}$ on slices $W\times\{y\}$ is a psc-metric which extends $g_0(y)$ and which has a product structure near the boundary $\p W\times\{y\}$.
\item{} For $y\in[0,1]$, away from $y_0$, the restriction of $\bar{\bar{g}}$ on slices $W\times\{y\}$, is a Gromov-Lawson cobordism.
\end{enumerate}
}
\vspace{3mm}

\begin{proof}
Let $w_0$ be the cusp singularity of $F$, with $F(w_0)=(y_0, c_0)$. Choose some small $\epsilon>0$. Then on $(p_2\circ F)^{-1}[0, c_0-\epsilon]$, we may use Theorem \ref{GLcobordismcompact} to construct a one-parameter family of metrics $\bar{g}_{y}=\bar{g}(F|_{y}, g_y)$,  each of which is a Gromov-Lawson cobordism on $(p_2\circ F)^{-1}[0, c_0-\epsilon]\cap (\{y\}\times W)$. This gives a metric $\bar{\bar{g}}=dy^{2}+\bar{g}_y$ on $(p_2\circ F)^{-1}[0,c_0-\epsilon]$, which satisfies the conditions above. Furthermore, away from $y_0$, i.e. for $y\in [0,y_0-\delta]\cup[y_0+\delta, 1]$ and some small $\delta>0$, we may extend this metric again using Theorem \ref{GLcobordismcompact}, to obtain the desired metric on $(p_2\circ F)^{-1}[0, c_0-\epsilon]\cup (W\times([0, y_0-\delta]\cup[y_0+\delta, 1]))$.

The difficult part is to extend this metric past the cusp singularity.  On the region  $(p_2\circ F)^{-1}[c_0-3\epsilon, c_0+3\epsilon]\cap W\times [y_0-3\delta, y_0+3\delta]$, the map $F$ is equivalent to a map
\begin{equation*}\label{coordcusp}
[-3\delta, 3\delta]\times [-3\epsilon, 3\epsilon]\times {D}^{n}\longrightarrow\mathbb{R},
\end{equation*}
which takes the form
\begin{equation*}\label{equivmap}
\begin{array}{c}
(y,z,x)\longmapsto
\begin{cases}
z^{3}+3yz-\sum_{i=1}^{s}{x_i}^{2}+\sum_{i=s+1}^{n-2}{x_i}^2, & \text{on $[-3\delta, \delta]\times [-\epsilon, \epsilon]\times {D}^{n}$}\\
z, & \text{outside $[-3\delta, 2\delta]\times [-2\epsilon, 2\epsilon]\times {D}^{n}$},
\end{cases}
\end{array}
\end{equation*} 
and which contains no critical points outside the region $[-3\delta, \delta]\times [-\epsilon, \epsilon]\times {D}^{n}$.

The metric $\bar{\bar{g}}$, as it is constructed so far, is defined near the boundary of this region and so pulls back to a metric defined near the boundary of the standard coordinate block. Now, applying the isotopy construction in Theorem \ref{relconcisodoublesurgery} to the restriction of this metric to $y=y_0-\delta$, we may extend the metric over the rest of this standard block. Pulling this metric back to $(p_2\circ F)^{-1}[c_0-2\epsilon, c_0+2\epsilon]\cap (W\times [y_0-2\delta, y_0+2\delta])$ results in a metric which agrees with $\bar{\bar{g}}$ near the boundary.  At this stage, the metric $\bar{\bar{g}}$ has been extended over $(p_2\circ F)^{-1}[0, c_0+2\epsilon]\cup (W\times([0, y_0-\delta]\cup[y_0+\delta, 1]))$. As there are no more singularities, this metric easily extends over the rest of $W\times I$, to obtain the desired metric.
\end{proof}

This theorem easily generalises to hold for any moderate path $F$ in $\GMor$ allowing us to prove Theorem \ref{isothm} from the introduction.
\vspace{3mm}

\noindent {\bf Theorem \ref{isothm}.}
{\em Let $\{W;X_0,X_1\}$ be a smooth compact cobordism, with $\pi_{1}(W)=\pi_{1}(X_0)=\pi_1(X_1)=0$ and $\dim W=n+1\geq 6$. Let $f_0$ and $f_1$ be a pair of admissible Morse functions on $W$ and let $g_0\in \Riem^{+}(X_0)$. Then the metrics $\bar{g}(g_0, f_0)$ and $\bar{g}(g_0, f_1)$ are isotopic, relative to the metric $g_0$, in $\Riem^{+}(W, \p W)$.  
}
\vspace{3mm}

\begin{proof}
With Lemma \ref{genmorsepath} in hand, it suffices to exhibit a moderate path of admissible generalised Morse functions connecting $f_0$ to $f_1$. From the work of Igusa, in particular Lemma 3.2 of \cite{I3}, any path through admissible generalised Morse functions can be easily adjusted to obtain a moderate one. Thus, it suffices to find a path. The existence of a path in $\GMor(W)$, connecting $f_0$ and $f_1$ follows from Theorem \ref{genmorseconnect}. Finally, the fact that any such path may be adjusted (fixing the endpoints) to lie entirely in $\GMor^{adm}(W)$ follows by application of the $2$-Index Theorem of Hatcher, Theorem \ref{2index} above.
\end{proof}

\subsection{Applying the Construction over General Families}
We now come to the main application of Theoerem \ref{relconcisodoublesurgery}. This is Theorem \ref{genmain}, which generalises Theorem \ref{main} to work for moderate families of admissible generalised Morse functions. 
\vspace{3mm}

\noindent {\bf Theorem \ref{genmain}.}
 {\em Let $\pi: E\to B$ be a bundle of smooth compact manifolds, where the fibre
  $W$ is a compact manifold with boundary $ \p W= X_0\sqcup X_1$ and the
  structure group is $\Diff(W;X_0,X_1)$.
  Let $F: E\to B\times I$ be a moderate
  family of admissible generalised Morse functions, with respect to $\pi$. In addition, we
  assume that the fibre bundle $\pi : E\to B$ is given the structure
  of a Riemannian submersion $\pi: (E,{\mathfrak m}_{E})\to
  (B,{\mathfrak m}_{B})$, such that the metric ${\mathfrak m}_{E}$ is
  compatible with the map $F: E \to B\times I$, and a gradient-like vector field $V_E$. Finally, let $g_0 : B \to \Riem^+(X_0)$ be a smooth map.

  Then there exists a metric $\bar{g}=\bar{g}(g_0,F)$ (where $F=(F, \m_E, V_E)$)
  such that for each $y\in B$ the restriction $\bar{g}(y)=\bar{g}|_{W_y}$ on the
  fibre $W_y=\pi^{-1}(y)$ satisfies the following properties:
\begin{enumerate}
\item[{\bf (1)}] $\bar{g}(y)$ extends $g_0(y)$;
\item[{\bf (2)}] $\bar{g}(y)$ is a product metric $g_{i}(y)+dt^2$
  near $X_{i}\subset \p W_y$, $i=0,1$;
\item[{\bf (3)}] $\bar{g}(y)$ has positive scalar curvature
  on $W_y$.
\end{enumerate}
} 
\vspace{3mm}

 \begin{proof}
The proof essentially mimics that of Theorem \ref{main}. Indeed, away from cusp singularities, the construction is identical. Near cusps, we perform an analogous procedure to that of Theorem \ref{main}, using the exponential map to pull the construction back to a disk bundle over the cusp and making use of the equivariance proved in Lemma \ref{genGLequiv} to perform the construction globally. 

Without loss of generality, we will assume that $\Sigma^{1}$ consists of only one path component with critical points of index $p+\frac{3}{2}$. 
By Lemma 3.4 of \cite{I3}, there is, for some $\delta>0$, a codimension $0$ immersion
\begin{equation*}
i:\Sigma^{1}\times(-3\delta, 3\delta)\longrightarrow B
\end{equation*}
such that the orientation of the normal bundle induced by $i$ is given by the parameter direction in which the pair of Morse singular points becomes a birth-death singular point (i.e. the ``birth" direction). 

Consider the space $E\setminus \pi^{-1}(i(\Sigma^{1}\times(-\delta, \delta)))$. On this space, we may proceed exactly as we did in the proof of Theorem \ref{main}. Furthermore, below $\Sigma^{1}$ the construction also proceeds as normal. It remains to show how we can extend this construction past $\Sigma^1$. Let $c\in I$ denote the image $p_2\circ F(\Sigma^1)$. Assume that the metric $\bar{g}$ is constructed on $(p_2\circ F)^{-1}[0, c-\epsilon]$ for some small $\epsilon>0$. Now consider the space $(p_2\circ F)^{-1}[0, c-\epsilon]\cup (E\setminus \pi^{-1}(i(\Sigma^{1}\times(-\delta, \delta))))$. Here, the metric $\bar{g}$ is constructed and so our task is to extend it over the region $(p_2\circ F)^{-1}[0, c+\epsilon]\setminus (p_2\circ F)^{-1}[0, c-\epsilon]\cup (E\setminus \pi^{-1}(i(\Sigma^{1}\times(-\delta, \delta))))$.

Let $D$ denote the neighbourhood $(p_2\circ F)^{-1}[0, c+3\epsilon]\setminus (p_2\circ F)^{-1}[0, c-\epsilon]\cup (E\setminus \pi^{-1}(i(\Sigma^{1}\times(-3\delta, 3\delta))))$. Using the exponential map, we identify $D$ with a disk bundle of $\Ver|_{\Sigma^1}\times (-3\delta, 3\delta)$. In particular, we obtain the splitting
\begin{equation*}
\begin{split}
D&\cong (-3\delta, 3\delta)\times D(\Ver^{0})\times D(\Ver^{-})\times D(\Ver^{+})\\ 
&\cong (-3\delta, 3\delta)\times (-3\epsilon, 3\epsilon)\times D(\Ver^{-})\times D(\Ver^{+}).
\end{split}
\end{equation*}


Identifying $D$ with its image under the above composition of isomorphisms, we see that on each fibre of $D$, the situation is of the type described in Lemma \ref{genmorsepath}. Let $\rho=\rho(x)$ and $r=r(y)$ denote radial distance coordinates, where $x\in D(\Ver^{-})$ and $y\in D(\Ver^{+})$. Note that, as we are completing the construction on the bundle $D$, we are actually interested in the function $F\circ\exp$, but for convenience we will refer to this function simply as $F$ for the remainder of the proof. From Theorem 2.6 of \cite{I3}, we know that on $D$, $F$ takes the form \begin{equation*}
F(t,z,x,y )=z^{3}+tz+\alpha(x, y),
\end{equation*}
for some coordinates $(t,z)\in (-3\delta, 3\delta)\times (-3\epsilon, 3\epsilon)$ and some smooth function $\alpha$ which is independent of $t$ and $z$ and agrees infinitesimally with $-\rho^{2}+r^{2}$ on the $0$-section. It is demonstrated in the the proof of Theorem \ref{main} (\cite{BHSW}), that $\alpha$ may be adjusted to agree with $-\rho^{2}+r^{2}$ near the zero section without introducing any new singularities.

Fibrewise, we now have almost exactly the situation described in Lemma \ref{genmorsepath}. The only difference is that we don't quite have global Morse coordinates and are making do instead with radial distance coordinates determined by the splitting. This is not a problem as the $SO(1)\times O(p+1)\times O(q)$ equivariance demonstrated in Lemma \ref{genGLequiv}, means that these radial distance coordinates are sufficient. The construction then proceeds in the manner of Theorem \ref{main}.

\end{proof}

\vspace{0.5cm}

\noindent {\em E-mail address:} walsmark@math.oregonstate.edu

\end{document}